\tikzset{
	on each segment/.style={
		decorate,
		decoration={
			show path construction,
			moveto code={},
			lineto code={
				\path [#1]
				(\tikzinputsegmentfirst) -- (\tikzinputsegmentlast);
			},
			curveto code={
				\path [#1] (\tikzinputsegmentfirst)
				.. controls
				(\tikzinputsegmentsupporta) and (\tikzinputsegmentsupportb)
				..
				(\tikzinputsegmentlast);
			},
			closepath code={
				\path [#1]
				(\tikzinputsegmentfirst) -- (\tikzinputsegmentlast);
			},
		},
	},
	mid arrow/.style={postaction={decorate,decoration={
				markings,
				mark=at position .5 with {\arrow[#1]{stealth}}
	}}},
}
\def\@tocline#1#2#3#4#5#6#7{\relax
  \ifnum #1>\c@tocdepth 
  \else
    \par \addpenalty\@secpenalty\addvspace{#2}%
    \begingroup \hyphenpenalty\@M
    \@ifempty{#4}{%
      \@tempdima\csname r@tocindent\number#1\endcsname\relax
    }{%
      \@tempdima#4\relax
    }%
    \parindent\z@ \leftskip#3\relax \advance\leftskip\@tempdima\relax
    \rightskip\@pnumwidth plus4em \parfillskip-\@pnumwidth
    #5\leavevmode\hskip-\@tempdima
      \ifcase #1
       \or\or \hskip 1em \or \hskip 2em \else \hskip 3em \fi%
      #6\nobreak\relax
    \dotfill\hbox to\@pnumwidth{\@tocpagenum{#7}}\par
    \nobreak
    \endgroup
  \fi}
\newtheorem{theorem}{Theorem}
\newtheorem{proposition}{Proposition}[section]
\newtheorem{lemma}[proposition]{Lemma}
\newtheorem{corollary}[proposition]{Corollary}
\theoremstyle{definition}
\newtheorem{definition}[proposition]{Definition}
\newtheorem{question}[proposition]{Question} 
\newtheorem{remark}[proposition]{Remark}
\numberwithin{equation}{section}
\newcommand\eps{\varepsilon}
\newcommand\e{{\rm e}}
\newcommand\dd{{\rm d}}
\newcommand\de{{\partial}}
\newcommand{\norm}[1]{\left\lVert #1 \right\rVert}
\newcommand{\ZZ}{\mathbb{Z}}
\newcommand\TT {{\mathbb T}}
\newcommand\RR {{\mathbb R}}
\newcommand\bv{{\boldsymbol v}}
\newcommand\rmd{{\rm d}}
\newcommand\cO{{\mathcal O}}
\newcommand{\be}{\begin{equation}}
\newcommand{\ee}{\end{equation}}
\def\wcc{\rightharpoonup}
\newcommand\wc{\mathrel{\stackrel{\makebox[0pt]{\mbox{\normalfont\tiny *}}}{\wcc}}}
\definecolor{redroma}{RGB}{142,0,28}
\definecolor{yroma}{RGB}{255,179,0}
\def\@xfootnote[#1]{%
	\protected@xdef\@thefnmark{#1}%
	\@footnotemark\@footnotetext}
\title[On maximally mixed equilibria of two-dimensional perfect fluids] {
On maximally mixed equilibria of two-dimensional perfect fluids
}
\author[M. Dolce]{Michele Dolce}
\address{Department of Mathematics, Imperial College London, London, SW7 2AZ, UK}
\email{m.dolce@imperial.ac.uk}
\author[T. D. Drivas]{Theodore D. Drivas}
\address{ Department of Mathematics, Stony Brook University,
Stony Brook, NY, 11794, USA}
\email{tdrivas@math.stonybrook.edu}
\address{ School of Mathematics, Institute for Advanced Study, 1 Einstein Dr., Princeton, NJ 08540, USA}
\email{ tdrivas@ias.edu}
\begin{document}
\begin{abstract}
	The vorticity of a two-dimensional perfect (incompressible and inviscid) fluid is transported by its area preserving flow. Given an initial vorticity distribution $\omega_0$, predicting the long time behavior which can persist is an issue of fundamental importance. In the infinite time limit, some irreversible mixing of $\omega_0$ can occur. Since kinetic energy $\mathsf{E}$ is conserved, not all the mixed states are relevant and it is natural to consider only the ones with energy $\mathsf{E}_0$ corresponding to $\omega_0$. The set of said vorticity fields, denoted by $\overline{\mathcal{O}_{\omega_0}}^*\cap    \{ {\mathsf E}= {\mathsf E}_0\}$, contains all the possible end states of the fluid motion.  A. Shnirelman introduced the concept of  \textit{maximally mixed states} (any further mixing would necessarily change their energy), and proved they are perfect fluid equilibria. We offer a new perspective on this theory by showing that any minimizer of any strictly convex Casimir in $\overline{\mathcal{O}_{\omega_0}}^*\cap    \{ {\mathsf E}= {\mathsf E}_0\}$ is maximally mixed, as well as  discuss its relation to classical statistical hydrodynamics theories. Thus, (weak) convergence to equilibrium cannot be excluded solely on the grounds of vorticity transport and conservation of kinetic energy.  On the other hand, on domains with symmetry (e.g. straight channel or annulus), we exploit all the conserved quantities and the characterizations of $\overline{\mathcal{O}_{\omega_0}}^*\cap    \{ {\mathsf E}= {\mathsf E}_0\}$ to give examples of open sets of initial data which can be arbitrarily close to any shear or radial flow in $L^1$ of vorticity but do not weakly converge to them in the long time limit. 
  \end{abstract}

\maketitle

\section{Introduction}
Let $M\subset \mathbb{R}^2$ be a bounded domain possibly with boundary $\partial M$ having exterior unit normal $\hat{n}$, 
e.g., the flat two-torus $\mathbb{T}^2$, the periodic channel $\mathbb{T}\times [0,1]$ or the disk $\mathbb{D}$. The  Euler equations governing the motion of a fluid which is perfect (inviscid and incompressible) and confined to $M$ read \cite{majda2002vorticity}
  \begin{alignat}{2}\label{eeb}
\partial_t u + u \cdot \nabla u &= -\nabla p,   &\qquad \text{in} \quad M,\\
\nabla \cdot u &=0,   &\qquad \text{in} \quad M,\\
u|_{t=0} &=u_0 ,   &\qquad\text{in} \quad M,\\
u\cdot \hat{n} &=0,   &\qquad\text{on}\   \partial M. \label{eef}
\end{alignat}
In terms of the vorticity $\omega := \nabla^\perp \cdot u$  where $\nabla^\perp:=(-\partial_2, \partial_1)$, the system above can be reformulated as
  \begin{alignat}{2}\label{eevb}
\partial_t\omega + u \cdot \nabla \omega&=0 &\qquad \text{in} \quad M,\\
\omega|_{t=0} &=\omega_0 ,   &\qquad\text{in} \quad M, \label{eevf}
\end{alignat}
where $u=K_M[\omega] = \nabla^\perp \Delta^{-1} \omega$ is recovered by the Biot-Savart law.  Equations \eqref{eevb}--\eqref{eevf} say that the vorticity is transported by particle trajectories, namely the solution admits the representation 
\be\label{vortlag}
\omega(t) = \omega_0\circ \Phi_t^{-1}
\ee
where  
\be
\frac{\rmd}{\rmd t} {\Phi}_t = u(\Phi_t,t), \qquad \Phi_0={\rm id}
\ee
 is the Lagrangian flowmap.   See Figure \ref{figeuler} for a visualization of the motion of the vorticity field starting from Gaussian random data.   The apparent emergence of large-scale order via the inverse energy cascade is a principle mystery of two-dimensional fluids which demands explanation from first principles.

	\begin{figure}[h!]
		\includegraphics[scale=.3]{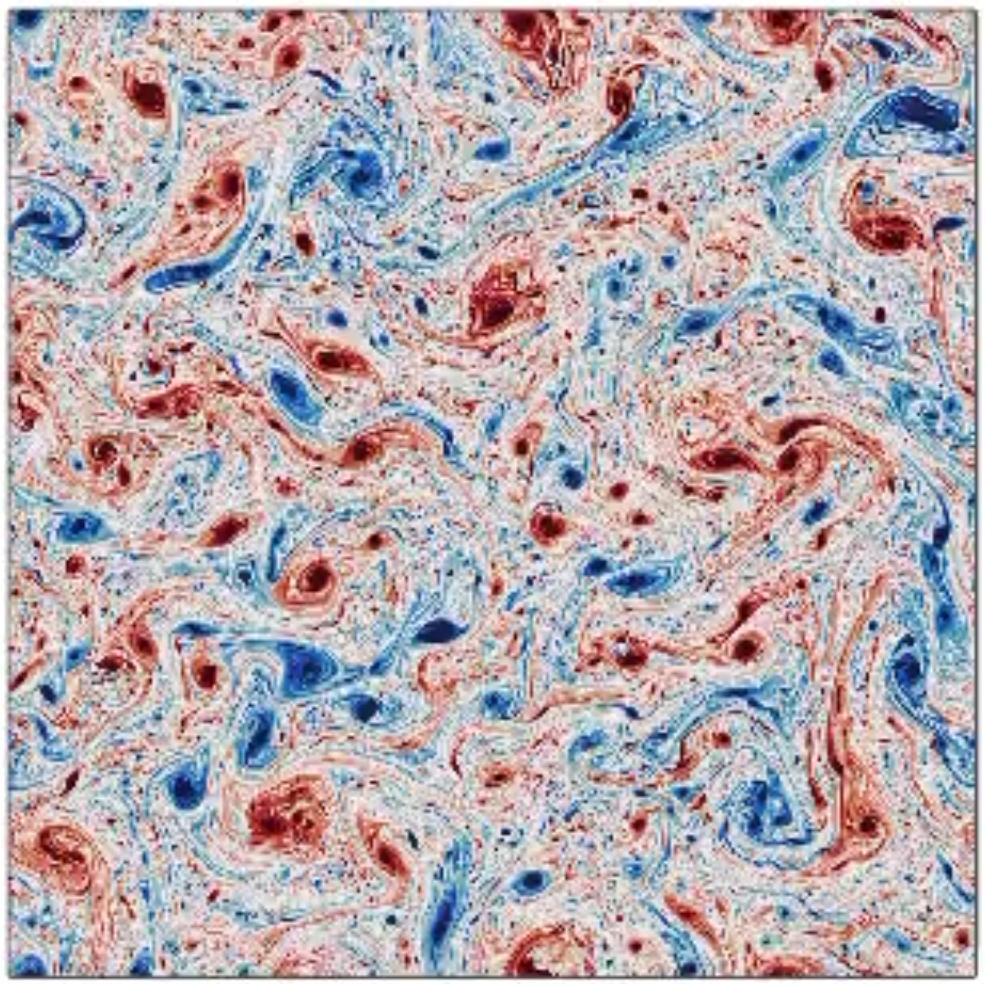} \hspace{2mm} \includegraphics[scale=.3]{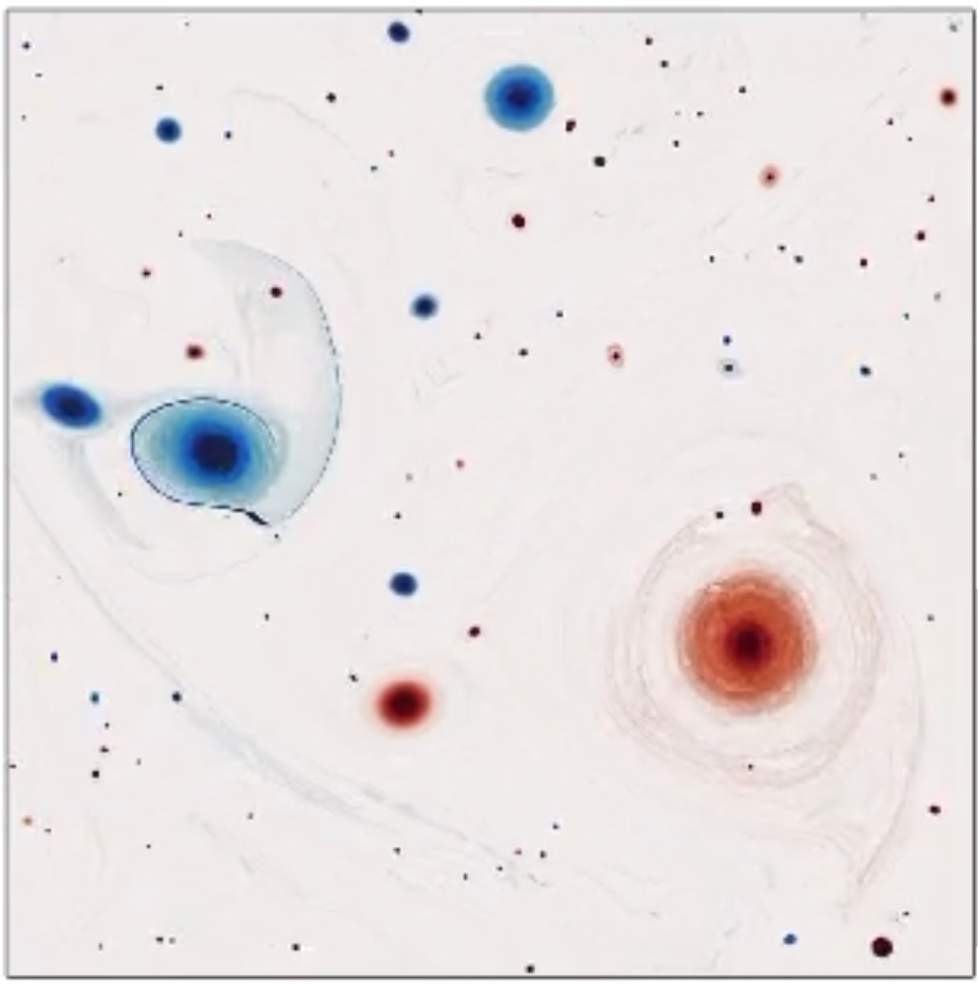}   \hspace{2mm}  \includegraphics[scale=.3]{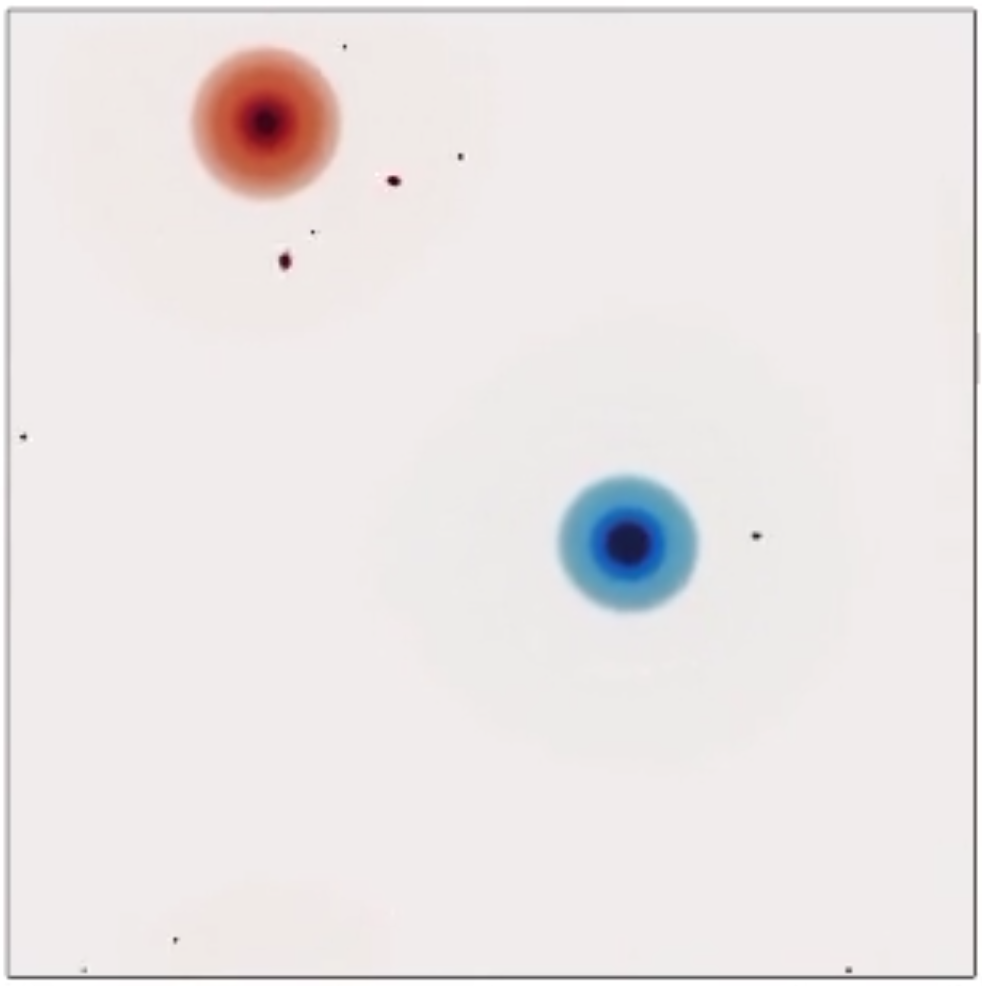}\\ 	
		\caption{Direct numerical simulations \cite{CD21,C21} of the time evolution (from left to right) of the 2d Euler vorticity field starting from an instance of Gaussian random initial data.}
		\label{figeuler}
	\end{figure}

	To formalize the study of the dynamics of two-dimensional fluids, we recall the following classical result on global wellposedness of bounded vorticity solutions.  Specifically, let $X$ be a ball in $L^\infty$ 
\be
\label{def:X}
X := \{ \omega\in L^\infty(M) \ : \ \|\omega\|_{L^\infty(M)} \leq 1\}.
\ee
Yudovich \cite{yudovich1963non} proved that the compact (with the weak-$*$ topology) metric space $X$ is a good phase space for the Euler equations in that \eqref{eevb}--\eqref{eevf} forms an infinite dimensional, time reversible, dynamical system on $X$ for all time. We call the time $t\in \mathbb{R}$ solution operator $S_t: X\righttoleftarrow$. Our interest is the long time behavior of this dynamical system.
 Since  $\omega(t)=S_t(\omega_0)$ satisfies $\|\omega(t)\|_{L^\infty(M)} = \|\omega_0\|_{L^\infty(M)} \leq 1$, we have 
   $$\omega(t_i)  \wc\overline{\omega} \quad  \text{along subsequences} \quad t_i\to \infty$$ 
   where, we recall that   weak-$*$ convergence is defined for $f_n\in L^\infty(M)$ by
\be\label{weakstar}
\lim_{n\to\infty} \int_{M} \varphi(x) f_n(x) \rmd x = \int_{M} \varphi(x) \overline{f}(x)\rmd x,  \qquad \forall \varphi\in L^1(M).
\ee
If $\|f_n\|_{L^\infty}$ is uniformly bounded (as is the case for $\|\omega(t_n)\|_{L^\infty(M)}$), then this notion of weak convergence agrees with others such as weak convergence in $L^2$.
   Weak-$*$ limits $\overline{\omega}$ can forget oscillations, leaving only a ``coarse-grained" representative of the vorticity.  As such, one could hope to describe and predict coherent structures arising at very long times by studying weak-$*$ limits (e.g., capturing the vortices and not the fine-scale filaments that can be observed in the rightmost panel of Figure \ref{figeuler}).
Denoting the weak-$*$ closure in $L^\infty(M)$ by $\overline{(\cdot )}^*$, we introduce the Omega limit set
 \be\label{omlimset}
\Omega_+(\omega_0) := \bigcap_{s\geq 0} \overline{\{ S_t(\omega_0), t\geq s \}}^*,
\ee
which is the collection of all such weak-$*$ limits as $t\to \infty$ along the solution $\omega(t)= S_t(\omega_0)$ passing through $\omega_0\in X$ at time $0$. The set $\Omega_+(\omega_0)$ represents all possible `coarsened' persistent motions launched by $\omega_0$.   

 Our interest is in understanding the structure of $\Omega_+(\omega_0)$: what kind of 2D perfect fluid motions can persist indefinitely? 
We want to see what can be ruled out \emph{kinematically},  based solely on the transport structure of the vorticity evolution, after accounting for some robust conserved quantities.  Recall that the conservation laws for the Euler equation which hold on general planar domains (possibly multiply connected) are
\begin{align*}
\text{energy:} \quad{\mathsf E}(\omega(t)) &= {\mathsf E}(\omega_0), \qquad \ \ {\mathsf E}(\omega) :=\frac{1}{2}\int_M |K_M[\omega]( x)|^2 \rmd x= \frac{1}{2}\int_M |u( x)|^2 \rmd x,\\
\text{Casimirs:} \quad {\mathsf I}_f(\omega(t)) &= {\mathsf I}_f(\omega_0), \qquad \ \ {\mathsf I}_f(\omega) := \int_Mf(\omega(x)) \rmd x, \qquad \text{for any continuous } \ f: X\to \mathbb{R},\\
\text{circulation:} \quad {\mathsf K}_i(\omega(t)) &= {\mathsf K}_i(\omega_0), \qquad  {\mathsf K}_i(\omega) := \int_{\Gamma_i} u\cdot \rmd \ell, \qquad \text{for connected  components $\Gamma_i$ of $\partial M$} .
\end{align*}
If the domain has additional symmetries there can be additional invariants such as linear momentum on the torus and channel\footnote{These can be related to conservation of circulation of the harmonic component of $u$ around fixed non-contractible loops.} and angular momentum on the disk:
\begin{align} \nonumber \stackengine{4pt}{\text{linear momentum}}{\text{on } $M= \mathbb{T}\times [0, 1]$}{U}{c}{F}{T}{S}: \quad {\mathsf M}(\omega(t)) &= {\mathsf M}(\omega_0), \qquad {\mathsf M}(\omega) := \int_{\{y=0\}}  u_1 \rmd x  -\int_M x_2 \omega (x) \rmd x = \int_M e_1\cdot u \rmd x,\\
	\notag \stackengine{4pt}{\text{angular momentum}}{\text{on } $M= \mathbb{D}$}{U}{c}{F}{T}{S}:
	\quad {\mathsf A}(\omega(t)) &= {\mathsf A}(\omega_0), \qquad {\mathsf A}(\omega) := -\int_M \tfrac{1}{2}(1-|x|^2) \omega(x) \rmd x= \int_Mx^\perp\cdot  u (x)  \rmd x.
\end{align}
\vspace{.25mm}

 \noindent However, for domains without Euclidean symmetries, linear and angular momentum conservation are lost due to pressure effects.  Casimirs and circulations are the \emph{only} invariants for general area preserving transformations of the vorticity (see Izosimov and Khesin  \cite{izosimov2017classificationa,izosimov2017characterization}). Together with energy, they are the only \emph{known} conservation laws (first integrals) for perfect fluids in 2D which hold for all data and on arbitrary domains.  For simplicity of presentation, we will primarily work on simply connected domains where the circulation does not impose any additional constraints beyond the constancy of the domain-averaged vorticity.

It is informative to consider the constraint that the vorticity function is, at every instant, an area preserving rearrangement of its initial datum imposes on the long time behavior.
Let $\mathscr{D}_\mu(M)$ denote the group of area preserving  diffeomorphisms on $M$ and denote the orbit of $\omega_0\in X$ in $\mathscr{D}_\mu(M)$ by
\be
\mathcal{O}_{\omega_0}:= \{ \omega_0 \circ \varphi \ :\ \varphi  \in \mathscr{D}_\mu(M)  \},
\ee
where we understand $\varphi$ to be in the component of the identity.
The fact that the diffeomorphisms $\varphi$ are area preserving implies that the Casimirs ${\mathsf I}_f$ are constant along orbits $ \mathcal{O}_{\omega_0} $ just as they are for Euler (in fact, the term Casimir implies they are invariants for the whole orbit).
To get closer to the Euler dynamics, we consider the intersection of this orbit with constant energy fields
\be
\mathcal{O}_{\omega_0,{\mathsf E}_0}:= \mathcal{O}_{\omega_0}\cap \{ {\mathsf E}= {\mathsf E}_0\}.
\ee
According to the representation \eqref{vortlag} we have that $\omega(t)= S_t(\omega_0)\in \mathcal{O}_{\omega_0,{\mathsf E}_0}$ for all $t\in \mathbb{R}$.

 In the coarse-grained infinite-time picture captured by weak-$*$ limits, there is a marked difference between the energy (also circulations and, on domains with symmetry, momentum) and the Casimirs: the energy is  weak-$*$ continuous whereas the non-linear Casimirs are not.  This means all the weak-$*$ limits $\overline{\omega}\in \Omega_+(\omega_0)$ have the same energy as the initial data -- it can thus be termed as a \emph{robust invariant}. Weak-$*$ limits, on the other hand, need not remember the initial Casimirs.  In fact, if $\omega(t_i)\wc \overline{\omega}$ we can only deduce  by lower-semicontinuity that
 \begin{equation}
 	\label{bd:lscweak}
 	\mathsf{I}_f(\overline{\omega})\leq \liminf_{i\to \infty}\mathsf{I}_f(\omega(t_i))=\mathsf{I}_f(\omega_0) \quad \text{ for any convex } f.
 \end{equation}
 Loss of enstrophy on weak limits, namely $\norm{\bar\omega}_{L^2}^2\leq\norm{\omega_0}_{L^2}^2 $ (or. more generally speaking, with a strict inequality in \eqref{bd:lscweak} for any convex Casimir) is associated to fine-scale \textit{mixing}.  This behavior is often observed in the long time limit of the Euler evolution and 
 is conjectured to be typical \cite{vsverak2012selected}.

\newpage 
Consequently, we have the following containments
\be\label{limOmlim}
\Omega_+(\omega_0) \subset \overline{\mathcal{O}_{{\omega_0,{\mathsf E}_0}}}^* \subset \overline{\mathcal{O}_{\omega_0}}^*\cap    \{ {\mathsf E}= {\mathsf E}_0\}
\ee
where the last containment is a consequence of  energy being weak-$*$ continuous. Due to mixing, on set $ \overline{\mathcal{O}_{\omega_0}}^*$ the Casimirs are no longer constant but convex Casimirs do not increase in view of \eqref{bd:lscweak}.
Thanks to this, given a strictly convex Casimir, we have a preorder structure
(a ``mixing order") on $\overline{\mathcal{O}_{\omega_0}}^*\cap    \{ {\mathsf E}= {\mathsf E}_0\}$:
\begin{definition}
	\label{def:po}
    Let $f$ be a strictly convex function. Given $\omega_1, \omega_2 \in \overline{\mathcal{O}_{\omega_0}}^*\cap \{ {\mathsf E}= {\mathsf E}_0\}$, we say $\omega_1 \preceq_f \omega_2$ if $\mathsf{I}_f(\omega_1) \leq \mathsf{I}_f(\omega_2)$.
\end{definition}
Note that a preorder naturally gives rise to a partial order once we quotient the space with respect to the induced equivalence relation.
Moreover, it is natural to introduce the notion of a \emph{minimal element}:
\begin{definition}
	\label{def:convexminimal}
An $\omega^*\in\overline{\mathcal{O}_{\omega_0}}^*\cap    \{ {\mathsf E}= {\mathsf E}_0\}$ is $f$-\textit{minimal} if for all $\omega$ such that $\omega\preceq_f\omega^*$ then $\omega^*\preceq_f \omega$.
\end{definition}

Namely, an $f$-minimal element (termed $f$-\emph{minimal flow}) $\omega^*$ has the property  that if $\omega\preceq_f\omega^*$ then  $\mathsf{I}_f(\omega)= \mathsf{I}_f(\omega^*)$.
 We can therefore think that $f$-minimal elements are \textit{maximally mixed} versions of $\omega_0$ at a given fixed energy, where mixing is measured through the loss of a given strictly convex Casimir.
 \begin{remark}
The idea of looking at maximally mixed states originates from a different preorder structure on $\overline{\mathcal{O}_{\omega_0}}^*\cap    \{ {\mathsf E}= {\mathsf E}_0\}$ introduced by Shnirelman in \cite{shnirelman1993lattice}, which we recall in Definition \ref{def:orderShnirelman} below. Shnirelman \cite{shnirelman1993lattice} then establishes the existence of minimal flows, according to his preorder, as an application of Zorn's lemma, and we comment more about this in \S \ref{sec:char}.   See also the discussion by Arnold and Khesin \cite{arnold2021topological}.
     In fact, let us observe that we could also define another preorder by requiring that the inequality $\mathsf{I}_f(\omega_1)\leq \mathsf{I}_f(\omega_2)$ holds for \emph{all} convex Casimirs. This gives rise to yet another notion of minimal elements, whose existence is obtained as well by applying Zorn's lemma\footnote{Indeed, the set $\overline{\mathcal{O}_{\omega_0}}^*\cap\{\mathsf{E}=\mathsf{E}_0\}$ is weakly-* compact. The weak-* lower semi-continuity of $\mathsf{I}_f(\omega)$ for convex $f$ ensures that weak-* accumulation points of totally ordered chains are lower bounds and therefore minimal elements.}. The main advantage of working with Definition \ref{def:po} is the underlying variational characterization of $f$-minimal elements, and that it provides a tool to measure the level of mixing that has occurred.
 \end{remark}
 \begin{remark}[Steady states: minimal and non-minimal flows]
 An important class of flows in regard to long term behavior of \eqref{eeb}--\eqref{eef} are those that are independent of time (steady). In view of $u$ be a two-dimensional divergence--free vector field, it is useful to introduce a streamfunction $\psi: M\to \mathbb{R}$ such that $u=\nabla^\perp \psi$. If the velocity is tangent to the boundary then $\psi$ must be constant on connected components of $\partial M$. Being a stationary state imposes the condition  that vorticity gradients are locally parallel to gradients of the streamfunction ($\nabla^\perp \psi\cdot\nabla \omega=0$).  A large class of steady solutions have additional structure, namely that the  vorticity $\omega$ is a given function of the stream function $\psi$ \emph{globally}, e.g. $\omega = F(\psi)$ for some $F:\mathbb{R}\to \mathbb{R}$.   Together with $\omega=\nabla^\perp\cdot u=\Delta \psi$ this means $\psi$ satisfies
\begin{alignat}{2}
 \Delta \psi &= F(\psi), \quad &&\text{ in } M,\label{gseuler1}\\
 \psi &= {\rm (const.)} , \quad &&\text{ on } \partial M.
 \label{gseuler2}
\end{alignat}
Any solution of the above problem is the stream function of a steady solution to 2D Euler which is tangent to the boundary.  
As it turns out, \emph{all} $f$-minimal flows are stationary solutions possessing a global $F$, see \cite{shnirelman1993lattice} and Theorem \ref{varprop} (i).
 Another  privileged family (a subclass of $f$-minimal flows) are called \emph{Arnold stable}.  They satisfy \eqref{gseuler1}--\eqref{gseuler2} for a Lipschitz $F$ satisfying either of the following two conditions
\begin{equation}
-\lambda_1<F'(\psi) <0, \qquad \text{or} \qquad 0< F'(\psi)<\infty
 \label{arnoldscond}
\end{equation}
where $\lambda_1:=\lambda_1(\Omega)>0$ is the smallest eigenvalue of $-\Delta$ in $M$.  These flows are Lyapunov stable in the $L^2$ topology of vorticity under the 2D Euler evolution. 
Any Arnold stable steady state is an $f$-minimal flow, since any mixing of them necessarily results in a change of energy. For an area preserving rearrangement, this follows by the fact that they are local maximizers or minimizers of the energy on their isovortical sheet $\mathcal{O}_{\omega}$, see e.g.  \cite{arnold2021topological,gallay2021arnold}. 
 On the other hand, any shear flow (on the channel) or circular flow (on the disk) having an inflection point cannot be an $f$-minimal flow. This is implied by \cite{shnirelman1993lattice} and Lemma \ref{prop:minstead} herein.
 \end{remark}
 
  One of the main purposes of this paper is to offer a different perspective of certain maximally mixed flows, specifically to those that naturally minimize the value of \emph{one} given strictly convex function.
  In \S 5 we prove the following:

\begin{theorem}\label{varprop}
Let $M\subset \mathbb{R}^2$ be a bounded planar simply connected domain with smooth boundary and let $f: X \to \mathbb{R}$ be a strictly convex function. Given any $\omega_0\in X$ with energy ${\mathsf E}_0$, there exists a minimizer $\omega^*\in X$ such that
\be\label{vp}
{\mathsf I}_f(\omega^*)=\min_{\omega\ \in\  \overline{\mathcal{O}_{\omega_0}}^*\cap    \{ {\mathsf E}= {\mathsf E}_0\} } {\mathsf I}_f(\omega).
\ee
Any such minimizer $\omega_*$ is both $f$-minimal and a minimal flow in the sense of Shnirelman in Definition \ref{def:orderShnirelman} and enjoys the following properties:
\begin{itemize}
\item[(i)]
 $\omega_*$   is a stationary solution of the Euler equation having the property that there exists a bounded monotone function $F:\mathbb{R}\to \mathbb{R}$ such that $\omega_*= F(\psi_*)$,
\item[(ii)]  there exists a continuous convex function $\Phi$ and scalars $\alpha,\beta,\gamma\in \mathbb{R}$ with $\alpha^2 + \beta^2\neq 0$ such that  $\omega_*$ is a minimizer on $X$ (the unconstrained space) of the functional 
\begin{equation}\label{Jfunc}
	J_\Phi(\omega)=\mathsf{I}_{\Phi+\alpha f }(\omega)+\beta(\mathsf{E}(\omega)-\mathsf{E}_0)+ \gamma \int_M (\omega - \omega_0)\rmd  x.
\end{equation}
\end{itemize}
\end{theorem}
\begin{remark}\label{alphanonzero}
If $\alpha\neq 0$, then $\Phi+\alpha f$ is strictly convex and thus the minimizer  of \eqref{Jfunc} is unique and satisfies $\omega= F(\psi)$ where $F(z) := ( \partial\Phi+\alpha f')^{-1}(-\beta z-\gamma )$ with $\partial$ denoting the subdifferential. In this case, $F$ is strictly increasing or decreasing, and under extra assumptions on $f$, one might be able to prove that it becomes Lipschitz.
\end{remark}
The theorem above gives a method to produce stationary and $f$-minimal solutions of the Euler equations by solving a variational problem on $\overline{\mathcal{O}_{\omega_0}}^*\cap    \{ {\mathsf E}= {\mathsf E}_0\}$.  Certain characterizations of this set are available, see \S \ref{sec:charOmset}.  Using these, in Appendix \ref{finiteconst}, we give a concrete and explicit instance of Shnirelman's maximal mixing theory as it applies to vortex patches with a finite number of regions. In this case, \eqref{vp} can be seen as an optimization problem with a \textit{finite} number of inequality constraints. Point (ii) of the Theorem  gives the natural extension of such characterization for a general $\omega_0$, inspired by work of Rakotoson and Serre \cite{rakotoson1993probleme}.
\begin{remark}
    In non-simply connected domains, one should consider the whole set $\overline{\mathcal{O}_{\omega_0}}^*\cap\{\mathsf{E}=\mathsf{E}_0\}\cap\{\mathsf{K}_i=\mathsf{K}_0\, ;\,  i=1,\dots,N\}$ where $N$ is the number of connected components of $\partial M$. This is necessary to have compatibility conditions to define the streamfunction as $\Delta\psi=\omega$ (in simply connected domains it is enough to fix the average of the vorticity, a condition included in $\overline{\mathcal{O}_{\omega_0}}^*$). Similarly, on domains with symmetries as the channel (disk) one imposes the conservation of the linear (angular) momentum. However, the proof of Theorem \ref{varprop} only requires straightforward modifications to account for these extra constraints.
\end{remark}

\begin{remark}[Shnirelman minimal flows as minimizers]
A remarkable property of Shnirelman's minimal flows  is that they are \emph{all} stationary solutions of the Euler equation having the property $\omega^*=F(\psi^*)$ for some bounded monotone function $F$. Moreover, if one is able to show that such $F$ is \emph{strictly} monotone, then the flow is trivially $f$-minimal when $f$ is the primitive of $F.$ This follows by the standard Lagrange multiplier rule in the larger set $X\cap \{ {\mathsf E}={\mathsf E}_0\}$.  It remains an open issue to understand whether Shnirelman's minimal flows in  $\overline{\mathcal{O}_{\omega_0}}^*\cap    \{ {\mathsf E}= {\mathsf E}_0\}$  (particularly those having regions of constant vorticity, see Appendix \ref{constvort}) can be realized as minimizers of some strictly convex functional.
\end{remark}

\begin{remark}[Non-uniqueness and regularity of minimal flows]
Given $\omega_0\in X$ with energy ${\mathsf E}_0$, there is no reason for the $f$-minimal flow in  $\overline{\mathcal{O}_{\omega_0}}^*\cap    \{ {\mathsf E}= {\mathsf E}_0\} $ to be unique. Moreover, in view of Remark \ref{2patchrem}, one can construct $f$-minimal flows in $\overline{\mathcal{O}_{\omega_0}}^*\cap    \{ {\mathsf E}= {\mathsf E}_0\} $ with better regularity than the datum $\omega_0\in X$.
\end{remark}

Theorem \ref{varprop} sheds light on some questions concerning relaxation to equilibrium. It is interesting to ask whether or not an initial datum can be kinematically isolated from all stationary solutions.  In this direction, 
 Ginzburg and Khesin \cite{ginzburg1994steady,ginzburg1992topology} showed that if  $M$ is a simply connected planar domain and $\omega_0$ is Morse, positive and has both a local maximum and minimum in the interior, then $\mathcal{O}_{\omega_0}$ contains no smooth Euler steady state. In the other direction,  Choffrut and {\v{S}}ver{\'a}k \cite{choffrut2012local} gave a full characterization of the steady states nearby certain Arnold stable ones on annular domains by showing that they are in one-to-one correspondence with their distribution functions, i.e. for all $\omega_0$ sufficiently close to $\omega$, there exists a unique stationary solution on 
$
\mathcal{O}_{\omega_0}$. Later,
Izosimov and Khesin \cite{izosimov2017characterization}  gave necessary conditions on the vorticity $\omega_0$ in order for a smooth steady Euler solution to exist on $\mathcal{O}_{\omega_0}$  for
any metric, as well as a sufficient condition for the existence of a steady solution for some metric. 

A consequence of Theorem \ref{varprop} is that, for any initial data with bounded vorticity, there  \emph{always exist} stationary solutions (with bounded vorticity, but not necessarily smooth) in the set $\overline{\mathcal{O}_{\omega_0}}^*\cap    \{ {\mathsf E}= {\mathsf E}_0\}$.  In light of the above discussion, the space $\overline{\mathcal{O}_{\omega_0}}^*\cap    \{ {\mathsf E}= {\mathsf E}_0\}$ represents the finest, coarse representative (accounting for all known kinematic constraints on the solution as well as the conservation laws) of the Omega limit set $\Omega_+(\omega_0)$ which we generally have. As such, this information alone is not enough to rule out  relaxation to equilibrium via Euler evolution for any initial datum, at least in a weak-$*$ sense. 

Convergence to equilibrium at long time can occur, although theorems (and likely scenarios) are very rare. The results \cite{bedrossian2015inviscid,ionescu2020nonlinear,masmoudi2020nonlinear,ionescu2022axi} are the only to fully characterize the Omega limit sets for Euler, albeit for very smooth perturbations of special equilibria.  For instance, if $\overline{\omega}$ is the vorticity of a (class of) strictly monotone shear flow on $\mathbb{T}\times [0,1]$ or $\mathbb{T}\times \mathbb{R}$, then for any $\omega_0$ in a Gevrey-2 
 neighborhood of $\overline{\omega}$, one has
\be\label{invisciddamplim}
\Omega_+(\omega_0) = \{ \overline{\omega}_{\omega_0}\} 
\ee
where $ \overline{\omega}_{\omega_0}$ is the vorticity of a (slightly modified) shear flow nearby $ \overline{\omega}$. The convergence happens weakly, not strongly, in $L^2$ so some amount of mixing definitively occurs. Thus, these remarkable results -- termed \textit{inviscid damping} -- show that certain full neighborhoods in the (Gevrey) phase space relax to equilibrium at long time, a feature consistent with Theorem \ref{varprop} and the theories of Statistical hydrodynamics described in \S \ref{sec:char}.  The fact that these stable equilibria are symmetric is no accident. On domains with symmetry, also the Arnold stable steady states must inherit the symmetry of the domain they occupy \cite{constantin2021flexibility}; all such on the channel are shears, while on the annulus they are  circular. It is unclear if the flows $ \overline{\omega}_{\omega_0}$ in \eqref{invisciddamplim} are $f$-minimal for some particular $f$. 

On the other hand, convergence to symmetric equilibria (even in this weak sense) seems to be the exception rather than the rule more generally.   Lin and Zeng \cite{lin2011inviscid} discovered non-shear Catseye steady states nearby (at low regularity) to the Couette shear flow, and there are also travelling waves with an order 1 velocity as showed by Castro and Lear \cite{castro2021traveling}. Such steady structure have recently been identified nearby the Kolmogorov flow (in the analytic topology) and the Poiseuille flow by Coti Zelati, Elgindi and Widmayer \cite{zelati2020stationary} and by Nualart \cite{nualart2022zonal} on the rotating sphere nearby zonal flows. These results provide an obstruction to inviscid damping back to a shear flow for general perturbations nearby certain shear flows of a given structure.  However, they do not rule out convergence to shear flow for some perturbations. 

 In a similar spirit,  we show here that there exist open sets of small, sufficiently coarse, perturbations of any shear flow on the periodic channel (actually, of any bounded vorticity field on the channel) that cannot possibly damp back to a shear flow.  Unlike those previous works, we do this not by finding other nearby steady states, but rather by excluding shear flows directly from a set containing the Omega limit set.

\begin{theorem}\label{damp}
Let $M= \mathbb{T}\times [0,1]$ and $\omega_b\in L^\infty(M)$.  For any $\delta>0$, there exists  $\xi \in C^\infty(M)$   such that 
\begin{equation}
	\label{bd:smallness}
	\norm{\xi-\omega_{b}}_{L^1}\lesssim \delta
\end{equation}
and for which the set $\overline{\mathcal{O}_{\xi}}^*\cap    \{ {\mathsf E}=  \mathsf{E}(\xi)\} \cap  \{ {\mathsf M}=  \mathsf{M}(\xi)\} $ contains no shear flows.  
\end{theorem}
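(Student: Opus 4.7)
My strategy is to leverage the characterization of $\overline{\mathcal{O}_\xi}^*$ from Section~\ref{sec:charOmset}, which identifies it with the Hardy--Littlewood--P\'olya majorization class $\{\omega \in L^\infty : \omega \prec \xi\}$, where $\omega \prec \xi$ means $\int_M \phi(\omega)\, dx \leq \int_M \phi(\xi)\, dx$ for every convex $\phi$ (with equality for affine $\phi$). Specialized to a shear $\bar\omega(x)=g(x_2)$ on the channel, membership in $\overline{\mathcal{O}_\xi}^*$ reduces to the purely one-dimensional condition that the profile $g \colon [0,1]\to \mathbb{R}$ satisfies $g \prec \xi$. The shear velocity is $(U(x_2),0)$ with $U'=-g$ and $\int_0^1 U\,dy=M:=\mathsf{M}(\xi)$; decomposing $U=M+h$ with $\int_0^1 h\,dy=0$ yields $\mathsf{E}(g)=M^2/2+\tfrac{1}{2}\|h\|_{L^2(0,1)}^2$. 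Matching $\mathsf{M}(g)=M$ and $\mathsf{E}(g)=\mathsf{E}(\xi)$ is therefore equivalent to the existence of $g \prec \xi$ with $\int_0^1 g\,dy=\int_M \xi\,dx$ and $\tfrac{1}{2}\|h\|_{L^2(0,1)}^2=\mathsf{E}(\xi)-M^2/2$.

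\textbf{Sharp bound and construction.} I would next bound $\tfrac{1}{2}\|h\|_{L^2}^2$ from above over all admissible $g$. The crude Poincar\'e estimate, using only the $L^2$-majorization $\|g\|_{L^2}\leq \|\xi\|_{L^2}$, gives $\tfrac{1}{2}\|h\|_{L^2}^2 \leq (\|\xi\|_{L^2}^2-g_0^2)/(2\pi^2)$ where $g_0=\int_M\xi$. This is saturated only by the lowest Dirichlet eigenmode $\sin(\pi x_2)$ and is thus too loose. A strictly sharper bound must exploit the \emph{full} majorization $g \prec \xi$ (all convex Casimirs, not just $L^2$), via a one-dimensional rearrangement argument capturing how the distribution $\xi^*$ constrains the 1D profile. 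Given such a bound, one constructs $\xi=\omega_b^\epsilon+\eta$ where $\omega_b^\epsilon$ is a mollification of $\omega_b$ close in $L^1$ and $\eta \in C^\infty(M)$ is a small non-shear perturbation with $\|\eta\|_{L^1}\lesssim \delta$, tuned so that $\mathsf{E}(\xi)-M^2/2$ strictly exceeds the sharp bound. A natural candidate concentrates $\eta$ on non-shear Laplacian modes (whose eigenvalues exceed $\pi^2$, e.g.\ $5\pi^2$), so that the distribution $\xi^*$ lands in a regime unreachable by any $1$D rearrangement with the prescribed momentum and energy.

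\textbf{Main obstacle.} The crux is producing the sharp bound and marrying it to the construction. Any estimate relying solely on $L^2$ majorization is useless: since $\lambda_1=\pi^2$ is the lowest Dirichlet eigenvalue on the channel, $\mathsf{E}(\xi)\leq \|\xi\|_{L^2}^2/(2\pi^2)$ holds universally, so the naive bound can never be strictly violated. The argument must genuinely use the richer convex-Casimir content of $g \prec \xi$---for instance through rearrangement inequalities for the Biot--Savart quadratic form on $(0,1)$, comparing $\|h\|_{L^2}^2$ not just against $\|g\|_{L^2}^2$ but against a functional of the full distribution $\xi^*$---while the perturbative construction of $\xi$ must be compatible with this sharper bound and still keep $\|\xi-\omega_b\|_{L^1}$ of order $\delta$ with $\xi$ smooth.
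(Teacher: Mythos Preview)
Your setup is sound: the majorization characterization of $\overline{\mathcal{O}_\xi}^*$, the reduction for shears to a one-dimensional profile $g \prec \xi$, and the formula $\mathsf{E}(g)=M^2/2+\tfrac12\|h\|_{L^2}^2$ with $h'=-g$, $\int h=0$ are all correct, and you correctly diagnose that any bound using only $\|g\|_{L^2}\le\|\xi\|_{L^2}$ is vacuous.

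But the proposal has a genuine gap in two linked places. First, the construction: concentrating $\eta$ on non-shear Laplacian eigenmodes goes the \emph{wrong way}. Since $\mathsf{E}(\omega)=\sum_k|\hat\omega_k|^2/\lambda_k$, a non-shear mode (eigenvalue $k^2+\pi^2\ge 1+\pi^2$) carries \emph{less} energy per unit $L^2$ vorticity than the lowest shear mode (eigenvalue $\pi^2$). So such a $\xi$ has \emph{smaller} energy than shears with the same distribution, not larger, and no gap arises this way. Second, the ``sharp rearrangement bound'' you invoke is never identified; nothing in the proposal says what functional of $\xi^*$ is supposed to obstruct the one-dimensional energy matching, and without that the argument has no content.

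The mechanism the paper uses, and which your plan does not reach, is dimensional: the two-dimensional Biot--Savart kernel has a logarithmic singularity while the one-dimensional kernel $G_0(y,z)$ on $[0,1]$ is bounded. One takes $\eta$ to be an approximate point vortex---a bump of height $\delta\varepsilon^{-2}$ supported on a box of area $\approx\varepsilon^2$---so that $\|\eta\|_{L^1}\approx\delta$ but $\mathsf{E}(\xi)\approx\delta^2|\log\varepsilon|$. For any shear $g\prec\xi$, majorization gives $\|g\|_{L^1}\le\|\xi\|_{L^1}=O(1)$; hence even if $g$ is as peaked as $\mu^{-2}$ on a set of measure $\mu^2$, the boundedness of $G_0$ forces
\[
\mathsf{E}(g)=-\tfrac12\iint G_0(y,z)g(y)g(z)\,\rmd y\,\rmd z=O(1),
\]
uniformly in $\mu$. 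Taking $\varepsilon$ small enough yields $\mathsf{E}(\xi)\gg\sup_{g\prec\xi}\mathsf{E}(g)$ and excludes all shears at the given energy and momentum. The full majorization enters only through the $L^1$ control on $g$; the work is done by the $L^\infty$ scale separation in $\xi$ together with the contrast between the singular 2D and bounded 1D Green's functions---an idea your eigenmode ansatz cannot access.
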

The field $\xi$ is comprised of highly peaked vortices embedded in the background $\omega_b$, i.e. there is $0<\varepsilon:=\varepsilon(\|\omega_b\|_{L^\infty})<\delta$ so that
\be\nonumber
\|\xi- \omega_b\|_{L^\infty}\approx \eps^{-2},\qquad |{\rm supp} (\xi-\omega_b)| \lesssim \varepsilon^2,  \qquad \mathsf{E}(\xi)-\mathsf{E}_b\approx \delta^2 |\log(\eps)|, \qquad |\mathsf{M}(\xi)-\mathsf{M}_b|\lesssim \delta.
\ee 
 See Figure \ref{figsym}.    In fact, Theorem \ref{damp} holds for fields $\tilde{\xi}$ in an open neighborhood of $\xi$ in $L^\infty$.
 \begin{remark}[Asymmetry of $f$-minimal flows]
 	By including the constraint on the momentum in \eqref{vp}, we can combine Theorem \ref{varprop} and Theorem \ref{damp} to see that all the $f$-minimal flows obtained as minimizers of strictly convex functionals in the set $\overline{\mathcal{O}_{\xi}}^*\cap    \{ {\mathsf E}=  \mathsf{E}(\xi)\} \cap  \{ {\mathsf M}=  \mathsf{M}(\xi)\} $ cannot be shear flows, thus providing examples of $f$-minimal flows  which do not conform to the symmetries of the domain. 
 \end{remark}
The idea behind our construction, carried out in \S 6, is to insert a large perturbation at small spatial scales in the form of  regularized point vorticies of width $\varepsilon$.  In view of the Biot-Savart law, from which the velocity is recovered from the vorticity by $u= \nabla^\perp \Delta^{-1}\omega$, these perturbations exploit the (logarithmic) singularity of the Green's function of the Laplacian in two-dimensions and thus have energy $|\log \varepsilon|$. We show that for $\varepsilon$ sufficiently small, one cannot rearrange such a configuration into a shear flow while conserving the energy. This is because shear flows are fundamentally one-dimensional objects in the sense that the Biot-Savart kernel is non-singular acting on functions of one variable. Similarly, radial flows can be excluded on the annulus by exploiting conservation of angular momentum.

In view of the containment \eqref{limOmlim}, Theorem \ref{damp} implies that the Euler solution starting from this data cannot weakly converge to a shear flow.
 These results show that the Euler dynamics cannot totally ``shear out" highly peaked coherent vortices, but they do not rule out damping to some asymmetric equilibria.   However, numerical simulations (see Figures  \ref{figeuler} \& \ref{figevo}) suggest that it is more likely that the Euler solutions relax to some time dependent (but recurrent) states. For additional discussion, see \cite{shnirelman2013long,drivas2022singularity}.
\begin{remark}[Perturbations of shear flows]
The $\omega_b$ of Theorem \ref{damp} can  be any shear flow  $u_b(x_1,x_2):=(v(x_2), 0)$ with bounded vorticity $\omega_b(x_1,x_2):= -v'(x_2)$. Our result shows that not only is the regularity important for convergence back to a shear flow, but also the proximity must be measured in a quite strong sense. In fact, our perturbation is extremely large in any $L^p$ (on vorticity) with $p>1$ and also in $L^2$ velocity.  
\end{remark}
	\begin{figure}[h]
		\includegraphics[scale=.4]{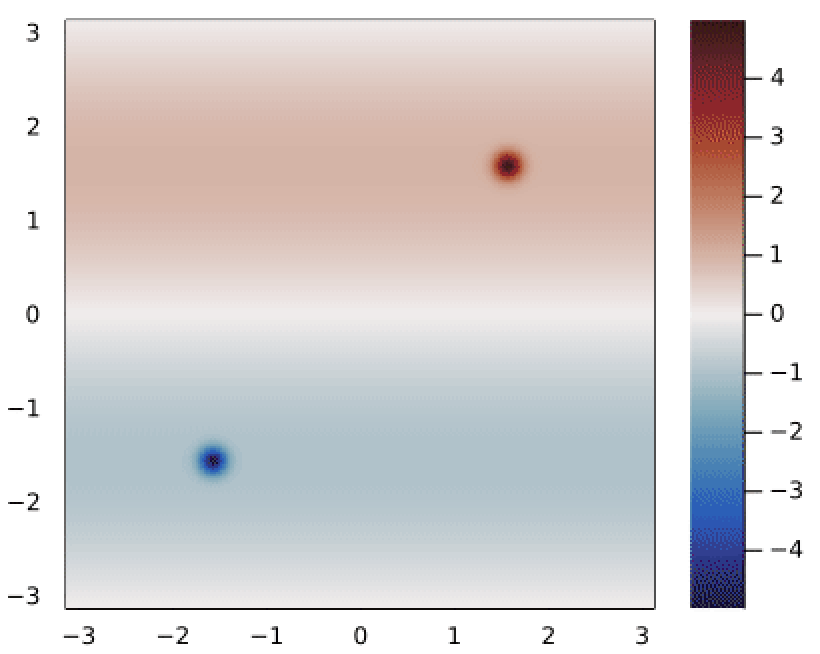} \ \ \ \ \ \includegraphics[scale=.4]{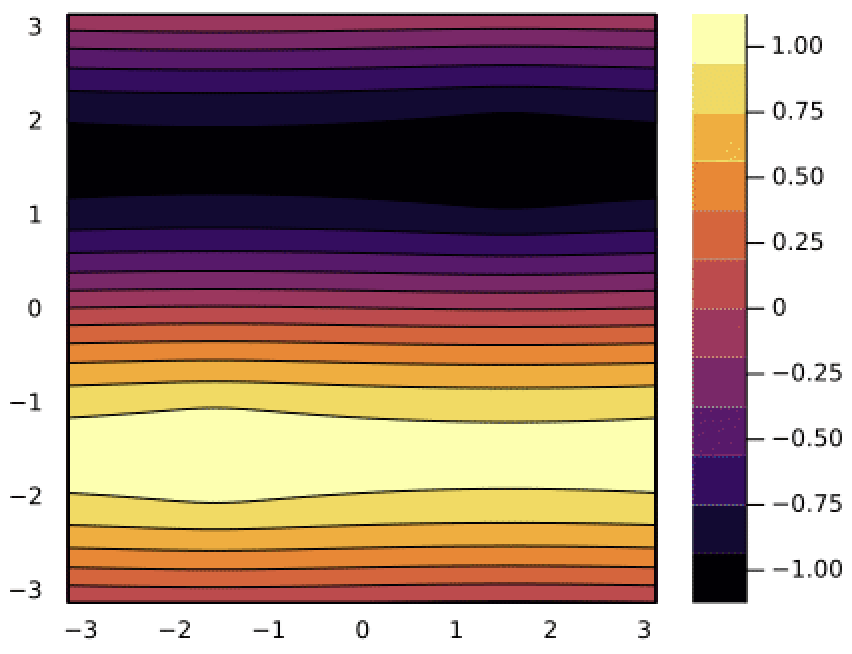}
		\caption{Example of a datum $\omega_0$ from Theorem \ref{damp} -- a perturbation (at the level of the streamfunction) of the Kolmogorov flow $\omega_{\mathsf{s}} = \sin(y)$ by two equal and opposite approximate point vortices. Vorticity colormap (left) and streamfunction contour plot (right).}
		\label{figsym}
	\end{figure}
	
	\begin{figure}[h]
		\includegraphics[scale=.4]{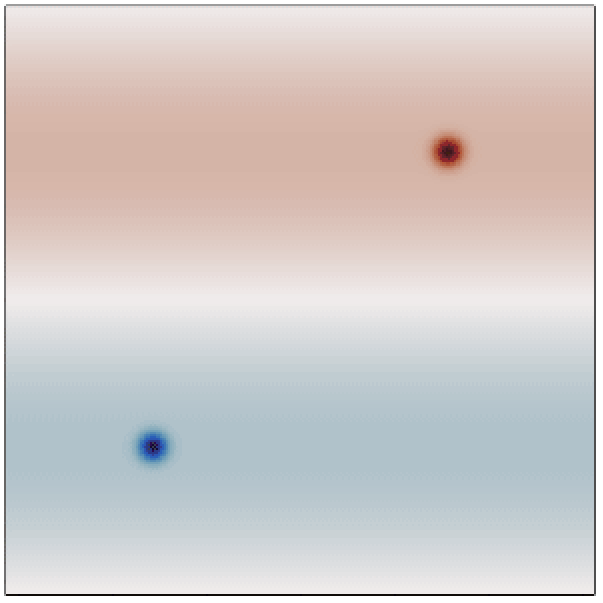} \hspace{2mm} \includegraphics[scale=.4]{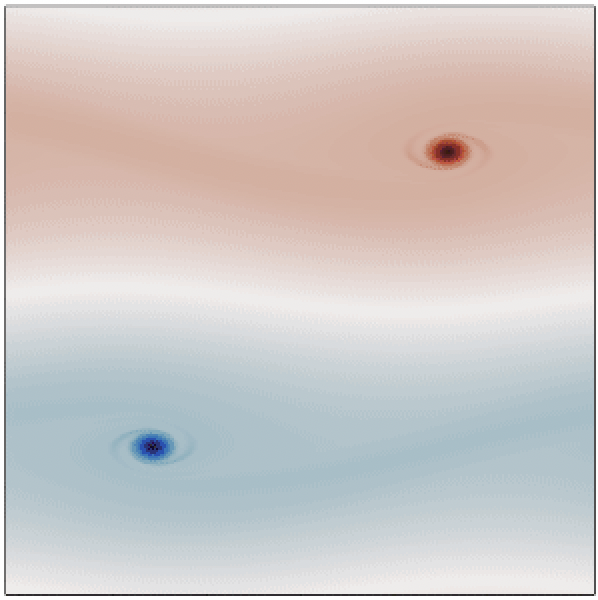}   \hspace{2mm}  \includegraphics[scale=.4]{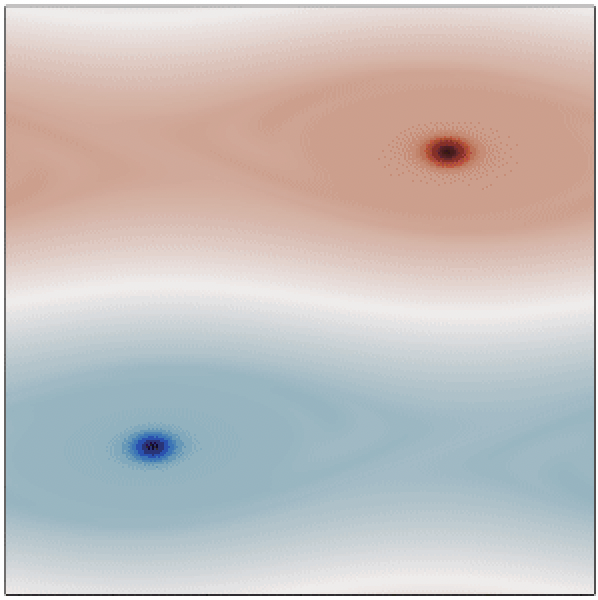}\\ \vspace{2mm}
				\includegraphics[scale=.4]{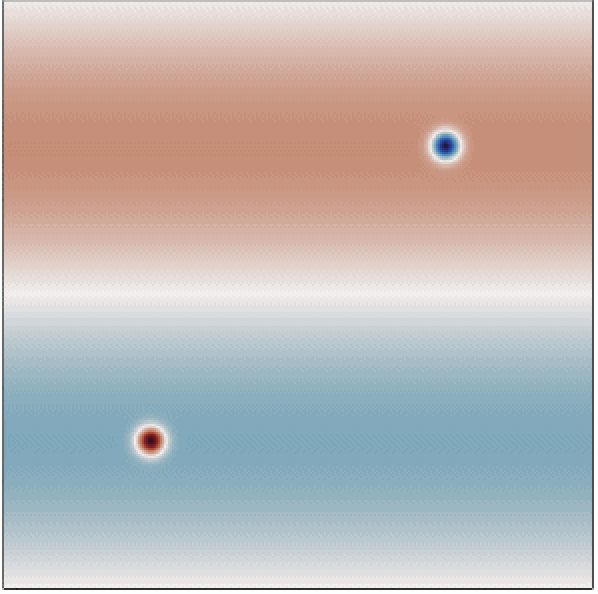} \hspace{2mm}  \includegraphics[scale=.4]{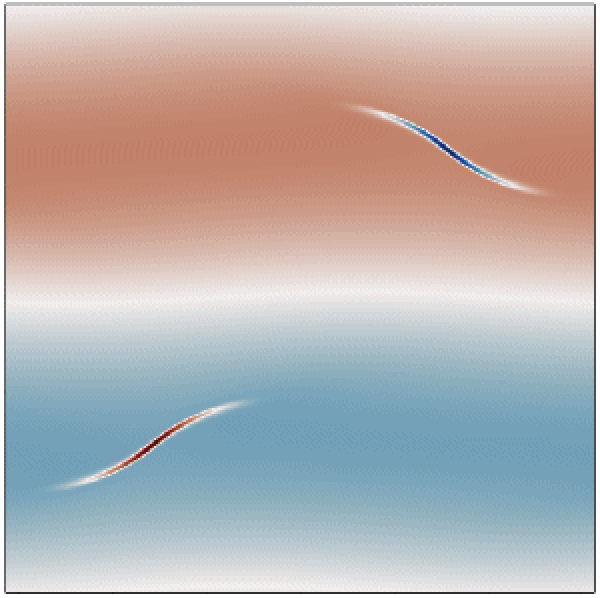}  \hspace{2mm}  \includegraphics[scale=.4]{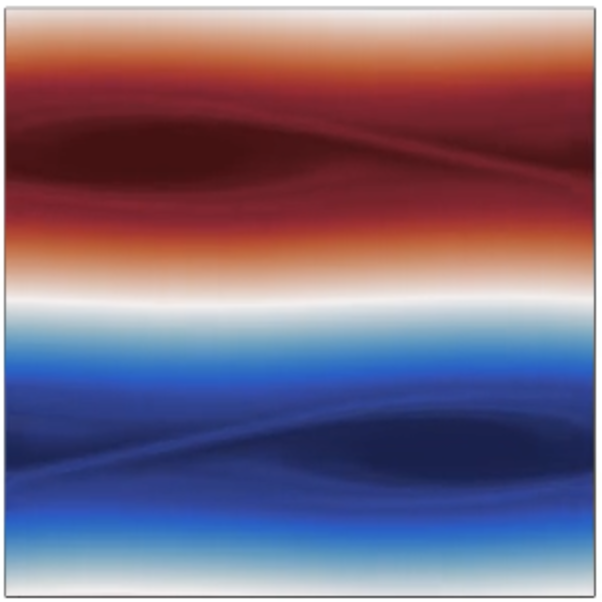}
		\caption{Direct numerical simulations \cite{CD21,C21} of the time evolution (from left to right) of initial data with localized vortices rotating with and against the background Kolmogorov shear flow under Navier-Stokes evolution with Reynolds number ${\rm Re}\approx 10^3$.  The long time behavior exhibits no tendency to return to shear.  In the case of the co-rotating vortices, it appears possible that the evolution weakly damps to a non-shear equilibrium, possibly an $f$-minimal flow.  However, in both case, periodic or quasi-period structures appear to be present at small scales it is unclear whether those can disappear in the long time limit.}
		\label{figevo}
	\end{figure}

\section{Connection to Statistical Hydrodynamics}
\label{sec:char}

The weak limit $ \overline{\omega}$ is a natural candidate object to describe  coarse-scale features of the fluid flow at late times.  In particular, as discussed in the introduction,  provided only $\omega_0\in L^\infty(M)$, one has convergence in the weak--$*$ sense, i.e.
\be\label{Onsagertheory}
\lim_{n\to\infty} \int_{M} \varphi(x) \omega(x,t_n) \rmd x = \int_{M} \varphi(x) \overline{\omega}(x)\rmd x,  \qquad \forall \varphi\in L^1(M)
\ee
for some $\overline{\omega}\in L^\infty(M)$ and some subsequence $t_n\to \infty$ as $n\to \infty$.
However, in light of the phenomenon of mixing discussed here,  fast oscillations can average out in this limit.  In particular, this convergence  does not imply for all continuous functions $f$ that $f(\omega(x,t_n))$ converges to $f(\overline{\omega}(x))$ and thus the weak limit need not have the same Casimir invariants as the initial datum $\omega_0$ (recall discussion around \eqref{bd:lscweak}).  

 In order to understand the structure of  $\overline{\omega}$, Onsager proposed a strategy based on the principles of equilibrium statistical mechanics in his foundational paper \cite{onsager1949statistical}.  Philosophically, his idea was to  study the finite ($N$) dimensional approximations (in his case, the point vortex approximation) to the infinite dimensional fluid system.  Assuming that the dynamics preserve phase space volume (which is true for point vortices) and are sufficiently ergodic\footnote{This is generally false for finite collections of point vortices \cite{khanin1982quasi}, but this may not be a fundamental issue \cite{eyink1993negative}.  However, it is also known that the infinite dimensional Euler equation is not ergodic and far from equilibrium due to the presence of wandering domains \cite{nadirashvili1991wandering} and Lyanpunov functions \cite{shnirelman1997evolution}.  See further discussion in \cite{shnirelman2013long,drivas2022singularity}. As such, it remains unclear to what extent equilibrium statistical mechanical ideas can be applied.  } so that at long times the vorticity fields are sampled according to the phase space volume available to them.  Under this assumption,  Onsager suggested  that the most probable vorticity field arising in the limit $N\to \infty$ should be that which maximizes (Boltzmann counting) entropy subject to the given initial energy, as usual in equilibrium statistical mechanics.   
 These ideas can be partly formalized in terms of concentration of measure, see the lecture notes of {\v{S}}ver{\'a}k \cite{vsverak2017aspects}.

This beautiful idea led to an explosion of work under the title ``statistical hydrodynamics" \cite{robert1991maximum,robert1991statistical,michel1994large, sommeria1991final,eyink1993negative,bouchet2008simpler} (see \cite{robert1995statistical, bouchet2012statistical} for a review).    Many of these
correspond to the following  variational problems (see Bouchet \cite{bouchet2008simpler} and Bouchet--Venaille \cite{bouchet2012statistical}) to determine the coarse-grained (weak--$*$ limit)  vorticity $\bar{\omega}$:
\be\label{variationalprobs}
\min_{\omega\ \in\  X\cap    \{ {\mathsf E}= {\mathsf E}_0\} } {\mathsf I}_f(\omega).
\ee
for a suitable choice of the Casimir $ {\mathsf I}_f$ with $f$ strictly convex (or $f$ concave and a corresponding maximization). 
 These equilibrium theories are generally consistent with the ``ultraviolet catastrophe" caused by  irreversible mixing.
 For example, Kraichnan's theory \cite{kraichnan1967inertial}, based on the principle that Euler should maximize mixing (subjectively defined here by maximally reducing the enstrophy) subject to constant energy, corresponds to the strictly convex Casimir $f(x)= \tfrac{1}{2} |x|^2$.  This is the basis for the selective decay theory (Bretherton and Haidvogel \cite{bretherton1976two}). 
For given initial vorticity $\omega_0\in X$ on a simply connected domain $M$, it predicts convergence to the stationary state $\omega$ given by the first eigenfunction of the Laplacian:
\be
\omega:=\Delta \psi = -\lambda_1 \psi, \qquad \|\psi\|_{L^2}^2 = \tfrac{ 1}{\lambda_1}{\mathsf E}_0,
\ee
where $\lambda_1= \lambda_1(M)>0$ is the smallest eigenvalue of the (negative) Dirichlet Laplacian $-\Delta$ on $M$.  On multiply connected domains, e.g. $M=\mathbb{T}^2$, for which the first eigenfunction is not unique, the theory is consistent with some slow motion on the first shell. 
Also, following Onsager, 
Joyce and Montgomery \cite{joyce1973negative} studied the case of the Boltzmann entropy for which $f(x)= - x \ln x$. On simply connected domains $M$, they predict that the end state for a given initial vorticity $\omega_0\in X$ is the stationary state $\omega$ given by the solution of the Liouville equation\footnote{It is worth remarking that, on the unit disk $\mathbb{D}$, this equation can be explicitly solved \cite{caglioti1992special} by 
\be
\omega(x) = \tfrac{1-A}{\pi} \tfrac{1}{(1-A|x|^2)^2} , \qquad A:= \tfrac{\beta}{8\pi+\beta}, \qquad \beta>-8\pi.
\ee}:
\be
\omega:=\Delta \psi = \tfrac{1}{\mathcal{Z}} e^{\beta \psi} , \qquad \mathcal{Z}= \tfrac{\int_M e^{\beta \psi(x)}\rmd x}{\int_M \omega_0(x) \rmd x} ,
\ee
where the equal energy condition implicitly determines the constant $\beta\in \mathbb{R}$ to fix the energy $ {\mathsf E}_0$.
On domains without boundary $\partial M=\emptyset$ (such as the torus or sphere) one cannot consider initial data with non-trivial sign-definite vorticity. Maximizing instead an entropy defined on the positive and negative parts of the vorticity, Joyce and Montgomery \cite{montgomery1974statistical} predicted convergence to a solution of the sinh-Poisson equation:
\be
\omega:=\Delta \psi = \tfrac{1}{\mathcal{Z}} \sinh({\beta \psi}) , \qquad \mathcal{Z}= \tfrac{\int_M  \sinh({\beta \psi}) \rmd x}{\int_M \omega_0(x) \rmd x} ,
\ee
where again the number $\beta$ is implicitly determined to ensure equal energy to $ {\mathsf E}_0$.
 A common thread through all of these variational problems is that they predict that at long times, the solutions will begin to look like some stationary fluid motion. However, these motions may not be accessible dynamically since they do not account for all known constraints on the structure of the solution given initial data\footnote{
Indeed, in view of the strict inclusion $\overline{\mathcal{O}_{\omega_0}}^*\cap    \{ {\mathsf E}= {\mathsf E}_0\} \subset   X\cap    \{ {\mathsf E}= {\mathsf E}_0\} $ for any $\omega_0\in X$ together with the fact that $\Omega_+(\omega_0)\subset \overline{\mathcal{O}_{\omega_0}}^*\cap    \{ {\mathsf E}= {\mathsf E}_0\}$, it may be that the predictions of the theories described below are dynamically inaccessible for many $\omega_0$ to which they indiscriminately apply.  As such, their domain of applicability (if any) should be carefully considered and their conclusions must be viewed with appropriate skepticism.}. 
Notable exceptions not generally conforming to \eqref{variationalprobs} are the theories of Miller, Robert, Sommeria \cite{miller1990statistical,miller1992statistical,robert1991maximum,robert1995statistical,robert1991statistical} and Turkington \cite{turkington1999statistical}. The former are richer and aim to recover the entire long time vorticity distribution (in the sense of Young measures).  We shall explain this theory in detail at the end of the section. Turkington's theory is instead different from a statistical hydrodynamic point of view to the MRS one. As we explain in Remark \ref{rem:truncation}, it is interesting to notice that his finite dimensional approximation of Euler's equations gives rise to elements in $\overline{\cO_{\omega_0}}^*$. Both theories predict an end state that could be dynamically accessible.

Taking one step closer to the Euler equation, Shnirelman \cite{shnirelman1993lattice} considered a variational problem akin to 
\be\label{variationalprobs2}
 \min_{\omega\ \in\  \overline{\mathcal{O}_{\omega_0}}^*\cap    \{ {\mathsf E}= {\mathsf E}_0\} }  {\mathsf I}_f(\omega).
\ee
In fact, his ideas were not stated directly in these terms but can be connected through our Theorem \ref{varprop}.  Roughly, his theory corresponds to maximizing ``mixing" in an objective sense (without choosing a particular Casimir such as enstrophy or entropy) subject to being in the weak-$*$ closure of the orbit in the diffeomorphism group (on slices of equal energy)  $\overline{\mathcal{O}_{\omega_0}}^*\cap    \{ {\mathsf E}= {\mathsf E}_0\}$.  Thus, Shnirelman's predictions are not obviously dynamically inaccessible in the same way as some of the predictions  arising from other statistical hydrodynamics theories are.

To understand this theory, it is important to study the structure of $\overline{\mathcal{O}_{\omega_0}}^*\cap    \{ {\mathsf E}= {\mathsf E}_0\}$. In fact, this problem was considered in different mathematical contexts \cite{brenier2003p,chong1976doubly,chong1974some,ryff1965orbits,vsverak2012selected,shnirelman1993lattice}. In \S \ref{sec:charOmset} we present a self-contained description of the different characterizations which we will exploit. Let us recall here a particular characterization of the weak-$*$ closure of the orbit of a scalar function in the group of area preserving diffeomorphisms $\mathscr{D}_\mu(M)$, used  in \cite{brenier2003p,chong1976doubly,shnirelman1993lattice}.  Denote the collection of evaluation maps along area preserving  diffeomorphisms  by
\be
{\mathscr{E}_\mu(M)}:= \{i_\varphi \ :\ \varphi  \in \mathscr{D}_\mu(M)  \} 
\ee
where $i_\varphi $ is the evaluation map, i.e. if $f:M\to \mathbb{R}$ then $(i_\varphi f)(x) = \int_M f(y) \delta(y- \varphi(x)) \rmd y = f(\varphi(x))$. We associate to $i_\varphi$ the positive measure $\delta(y-\varphi(x) )\rmd y/|M|$. The following is established in \cite{brenier2003p,brenier2020examples}:
\begin{proposition} 
	\label{prop:weak} We have
	\be
	\label{eq:weakK}\overline{{\mathscr{E}_\mu(M)}}^* = {\mathscr{K}(M)},
	\ee
	where ${\mathscr{K}(M)}$ is the convex space of polymorphisms or bistochastic  operators
	\be
	\label{def:bist}
	{\mathscr{K}(M)}:= \left\{K \in \mathcal{P}(M\times M) \, :\, \ \int_{y \in M} K(\dd x,\dd y) = \dd x, \ \int_{x \in M} K(\dd x,\dd y) = \dd y \right\},
	\ee
    where $\mathcal{P}(M\times M)$ denotes the  space of probability measures.
\end{proposition}

\begin{remark}[Examples of polymorphisms]
	\label{rem:poly}
	Bistochastic operators are the infinite dimensional extension of bistochastic matrices. Few important examples are the following:
	\begin{itemize}
		\item[\textbf{1)}] Let $\varphi  \in \mathscr{D}_\mu(M)$. Then the insertion operator $K_\varphi=i_\varphi $ is bistochastic and $(K_\varphi\omega)(x)= \omega(\varphi(x))$
		\item[\textbf{2)}] The \textit{complete mixing} operator $K_{\textsf{mix}}$ given by 
		\begin{equation}
			(K_{\textsf{mix}} \omega)(x)=\frac{1}{|M|}\int_{M} \omega(y)\dd y
		\end{equation}
		is bistochastic. On $M=\mathbb{T}^2$, this operator is the projection onto the zero Fourier mode.
		\item[\textbf{3)}]  On $M=\mathbb{T}^2$, some frequency cut-offs define bistochastic operators, for example the Fejér kernel:
		\begin{equation}
			F_N(x)=\frac{1}{(2\pi)^2}\sum_{k_1,k_2=-N}^{N} \left(1-\frac{|k_1|}{N}\right)\left(1-\frac{|k_2|}{N}\right)e^{i(k_1x_1+k_2x_2)}.
		\end{equation}
		In view of  $\int_{\TT^2}F_N(x)\rmd x=\widehat{F}(0)=1$, this kernel has  the following properties:
		\begin{itemize}
			\item[a)] $F_N(x)\geq 0$,
			\item[b)]$\widehat{F_N}(k)=\begin{cases}\left(1-\frac{|k_1|}{N}\right)\left(1-\frac{|k_2|}{N}\right) &\qquad 1\leq |k_1|,|k_2|\leq N \\
				0 &\qquad \max\{ |k_1|,|k_2|\}> N
			\end{cases}$,
			\item[c)] $\int_{\TT^2}F_N(x)\dd x=1$.
		\end{itemize}
		 Given $\omega\in L^2$, define a frequency cut-off as  follows
		\begin{equation}
			\label{def:Kn}
			(K_N\omega)(x)=\int_{\TT^2} F_N(x-y)\omega(y)\dd y= (F_N*\omega)(x).
		\end{equation}
		Thanks to the properties of $F_N$, it can be verified that $K_N$ is a bistochastic operator (see \cite[\S 3.1]{turkington1999statistical}). 
	\end{itemize}
\end{remark}
The set of polymorphisms is relevant to the weak-$*$ closure of the orbit since (see Proposition \ref{prop:sverak} in \S \ref{sec:charOmset}), given any $\omega_0\in X$, we have
\begin{align}\label{def:shnirset1}
	\overline{\mathcal{O}_{\omega_0}}^*&=\{\omega \in X: \ \omega=K\omega_0\ \text{ for } K\in\mathscr{K}\} .
\end{align}

Shnirelman uses the characterization  \eqref{def:shnirset1} of $\overline{\cO_{\omega_0}}^*$ to impart a preordering
in $\overline{\cO_{\omega_0}}^*\cap\{ \mathsf{E} =\mathsf{E}_0\}$;
\begin{definition}
	\label{def:orderShnirelman}
	Given $\omega_1, \omega_2 \in \overline{\mathcal{O}_{\omega_0}}^*\cap \{ {\mathsf E}= {\mathsf E}_0\}$, we say that $\omega_1 \preceq_s \omega_2$ if there exists a polymorphism $K \in \mathscr{K}$ such that $\omega_1 = K\omega_2$. An $\omega^*\in\overline{\mathcal{O}_{\omega_0}}^*\cap \{ {\mathsf E}= {\mathsf E}_0\}$ is \textit{minimal in the sense of Shnirelman} if for all $\omega$ such that $\omega\preceq_s\omega^*$ then $\omega^*\preceq_s \omega$.
\end{definition}
Minimal elements (flows) are defined in the same way as Definition \ref{def:convexminimal}, only now using this preorder.  
However, we will prove in \S \ref{seccharmin} that if you are $f$-minimal in the sense of Definition \ref{def:po}, then you are also minimal in the sense of Shnirelman, as can be deduced from the following.
\begin{lemma}\label{prop:eqstric}
	Given $\omega_2 \in X$ and $K\in \mathscr{K}$, let $\omega_1=K\omega_2$.  There exists $\widetilde{K}\in \mathscr{K}$ such that $\omega_2=\widetilde{K}\omega_1$ if and only if there exists a strictly convex function   
    $f:\mathbb{R}\to \mathbb{R}$ such that $\mathsf{I}_f(\omega_1)=\mathsf{I}_f(\omega_2)$.
\end{lemma}
Unfortunately, the lemma above does not imply an equivalence between the two different notions of minimal flows. Indeed, we cannot guarantee that Shnirelman's minimal elements globally minimize a given Casimir. For instance, a Shnirelman minimal state might act as a ``saddle point'' or local minimum, which is a configuration where any further mixing would change the energy even though the global minimum of the Casimir has not yet been reached. We are currently unable to rule out this scenario, nor can we produce a concrete example where it occurs.

The intuition that $f$-minimal flows are maximally mixed, quantified by the conservation of a given Casimir in their weak-$*$ closure with our definition, can also be interpreted using bistochastic operators. The application of a bistochastic operator $K$ could mix $\omega^*$, but  mixing is an irreversible process that prevents us from recovering $\omega^{*}$ from $K\omega^{*}$. On the other hand, for a minimal flow, we can always recover the initial state. We are therefore excluding any  irreversible mixing of $\omega^{*}$. Thus, the class of available transformations of a minimal flow reduces to a subset of measure-preserving maps (not necessarily diffeomorphisms). In fact, Lemma \ref{prop:eqstric} shows that if $\omega=K\omega^*$ and $\omega^*=\widetilde{K}\omega$, then $\omega$ and $\omega^*$ are equimeasurable. However, it remains unclear how to quantify the degree of mixing exhibited by Shnirelman's minimal elements, as there is no practical tool (such as a specific Casimir) to compare them with other states in $\overline{\mathcal{O}_{\omega_0}}^*\cap\{\mathsf{E}=\mathsf{E}_0\}$.
\begin{remark}[Truncations of Euler]
	\label{rem:truncation}
	We point out a natural connection between Turkington's theory \cite{turkington1999statistical} and that of Shnirelman \cite{shnirelman1993lattice}. In \cite{turkington1999statistical}, Turkington defines the finite dimensional approximation of the 2D Euler equations through a Fourier truncation obtained via the  Fejér kernel, see \eqref{def:Kn}, which is a bistochastic operator. Thus, the truncated dynamics used by Turkington is in $\overline{\cO_{\omega_0}}^*$. Moreover, thanks to standard properties of the Fejér kernel, it belongs to   $\overline{\cO_{\omega_0}}^*\cap\{ \mathsf{E} =\mathsf{E}_0^{(N)}\}$ where $\mathsf{E}_0^{(N)}\approx \mathsf{E}_0+C/N$ where $N\to \infty$ is the truncation parameter.
    The truncation through Fejér kernel seems a natural choice to investigate long-time behavior questions, both theoretically and numerically. In fact, a standard truncation in frequency defined as $\widehat{T}_N(k)=\chi_{[-N,N]}(k)$ is not bistochastic since the associated kernel is not a nonnegative measure. In particular, a truncation with $T_N$ changes the sign of the initial vorticity and therefore it might give rise to a completely different dynamics, since the sign of the vorticity has a fundamental role in the evolution (as the merging of likely signed vortices for instance).  We remark also that Zeitlin's \cite{zeitlin1991finite} geometric quantization based on an approximation to the group of area preserving diffeomorphism has proved to be very useful in numerical simulations of long time 2D Euler flows   \cite{modin2020casimir}.
\end{remark}

Finally, we describe a more complete picture of the vorticity at late times (beyond the weak-$*$ limit $\bar\omega$) and its connection to the Miller, Robert and Sommeria theory as well as to $f$-minimal flows. 
 To understand these theories, we first recall that for uniformly bounded vorticity fields, the \emph{fundamental theorem of Young measures} (see \cite{valadier1994course,florescu2012young}) guarantees the existence of a (measurably) parametrized measure $ \nu_x(\rmd \sigma)$ such that for any $ f\in C([-\mathsf{m},\mathsf{m}])$, 
\be\label{youngmeasurelim}
\lim_{n\to\infty} \int_{M} \varphi(x) f(\omega(x,t_n))  \rmd x= \int_{M} \varphi(x) \int_{-\mathsf{m}}^\mathsf{m} f(\sigma) \nu_x(\rmd \sigma) \rmd x , \qquad \forall\varphi\in L^1(M),
\ee
with $\mathsf{m}=\|\omega_0\|_{L^\infty(M)}$. It turns out that this Young measure defined by \eqref{youngmeasurelim} always assumes the form
\be \label{MRmeasure}
\nu_x(\rmd \sigma) = \rho(x,\sigma) \rmd \sigma.
\ee
In fact, by Proposition \ref{prop:weak} and equation \eqref{eq:convergence}, $\rho$ is  represented in terms of a bistochastic kernel
\be\label{rhodef}
 \rho(x,\sigma)=  \int_M \delta (\sigma -\omega_0(y)) K(x,y) \rmd y
\ee
for some bistochastic kernel $K\in \mathscr{K}$ (which encodes dependence on the subsequence of long times). 

Having introduced the Young measure $\nu_x(\rmd \sigma)$, the convergence \eqref{Onsagertheory} holds with the weak limit 
\be \label{barom}
 \overline{\omega}(x)=  \int_{-\mathsf{m}}^\mathsf{m} \sigma \nu_x(\rmd \sigma) = \int_M \omega_0(y) K(x,y) \rmd y,
\ee
in accord with \eqref{def:shnirset1}.
We note that from \eqref{rhodef} and \eqref{barom}, there may be many measures having the same average vorticity with different ``fine-scale" behaviors. Since energy is weak-$*$ continuous, the energy of $\overline{\omega}$ is the same as the data
\be\label{energyconst}
\mathsf{E}[\overline{\omega}] :=-\frac{1}{2} \int_M \overline{\psi}(x)\overline{\omega}(x)\rmd x = \mathsf{E}[\omega_0].
\ee
In view of the properties of marginals of bistochastic kernels, the distribution is normalized: 
\begin{align}\label{normalization}
 N[\rho](x):=  \int_{-\mathsf{m}}^\mathsf{m} \rho(x,\sigma) \rmd \sigma &=  1.
\end{align}
Moreover, the vorticity distribution function is preserved in the sense that the marginal satisfies
\begin{align}\nonumber
 \mathsf{D}[\rho](\sigma):= \int_M \rho(x,\sigma) \rmd x &=  \int_M \delta (\sigma -\omega_0(y))  \rmd y\\
 & = \frac{\rmd}{\rmd \sigma} \int_M \chi_{\{\omega_0(x)\leq \sigma\}} \rmd x=: \mathsf{d}[\omega_0](\sigma). \label{distribution}
\end{align}
Miller, Robert and Sommeria suggested that the long-time vorticity distribution is a maximizer of an entropy (minimizer of negentropy) subject to the constraints of energy \eqref{energyconst}, normalization \eqref{normalization} and distribution function \eqref{distribution}.  The entropy quantity measures the number of ``microscopic" vorticity fields which are compatible with a distribution $\rho(x,\sigma)$.  The relevant measure (derived from ``first principles" in finite dimensions) is assumed\footnote{Aside from the foundational issue of ergodicity which must be established to justify its use for perfect fluids,  there is some debate as to whether it is justified to use this counting entropy to understand the long time behavior of real-world flows for which non-ideal effects, however slight, are present.   In particular, it is not clear that the entire distribution function \eqref{distribution} should be remembered in the formulation of a long time theory.  Turkington argued for a modified entropy which accounts for some non-ideal effects \cite{turkington1999statistical}. We remark however that for arbitrarily long time horizons, it can be shown that inviscid limits of Navier-Stokes solutions \emph{do} remember their intial vorticity distribution functions \cite{constantin2022inviscid}.} to be the Maxwell-Boltzmann entropy
\be\label{boltent}
\mathcal{S}[\rho]:=- \int_M   \int_{-\mathsf{m}}^\mathsf{m}  \rho(x,\sigma) \log \rho(x,\sigma) \rmd \sigma\rmd x.
\ee
The theory predicts that the most probable distribution $\bar{\rho}(x,\sigma)$ will be the minimizer of the negentropy \eqref{boltent} subject to the $\bar{\rho}$ satisfying the constraints \eqref{energyconst}, \eqref{normalization} and \eqref{distribution}, namely 
\begin{equation}
	\label{MRSoriginal}
	\mathcal{S}[\bar{\rho}]=\min\{-\mathcal{S}[\rho]\, :\,\mathsf{E}[\overline{\omega}]= \mathsf{E}[\omega_0], \quad \mathsf{D}[\rho](\sigma)= \mathsf{d}[\omega_0](\sigma), \quad N[\rho](x)=  1 \}
\end{equation}

Notice that in \eqref{MRSoriginal} the information on all ideal invariants is retained at the level of the  predicted equilibrium distribution \eqref{MRmeasure}, \eqref{rhodef} but not the weak limit \eqref{barom}.   In particular, the theory is consistent with strict inequalities  \eqref{bd:lscweak} due to irreversible mixing.

The variational problem \eqref{MRSoriginal} has been considered by many authors, see \cite{bouchet2008simpler,bouchet2012statistical,chavanis2009dynamical} and references therein. Indeed, it can be explicitly solved. Following \cite{chavanis2009dynamical}, a minimizer to \eqref{MRSoriginal} can be found as a critical point of the Lagrangian 
\begin{equation}
	\label{def:LMRS}
	\mathcal{L}(\rho,\beta,\alpha,\zeta):=	-\mathcal{S}[\rho]-{\beta}(\mathsf{E}[\overline{\omega}]- \mathsf{E}[\omega_0])- \widetilde{\alpha}(\sigma)(\mathsf{D}[\rho](\sigma)-\mathsf{d}[\omega_0](\sigma))-\widetilde{\zeta}(x) (N[\rho](x)- 1).
\end{equation}
Computing the first variation of $\mathcal{L}$ with respect to the first variable we have 
\begin{align}\nonumber
	\frac{\dd}{\dd \eps}(\mathcal{L}(\rho+\eps h,\beta,\alpha,\zeta))|_{\eps=0}=\,&\int_{M}\int_{-\mathsf{m}}^{\mathsf{m}}(\ln(\rho(x,\sigma))+1+{\beta}\sigma \overline{\psi}(x))h(x,\sigma)\dd x \dd \sigma\\
	&\quad -\widetilde{\alpha}(\sigma)\int_Mh(x,\sigma)\dd x-\widetilde{\zeta}(x)\int_{-\mathsf{m}}^{\mathsf{m}}h(x,\sigma)\dd \sigma=0.
\end{align}
Since $|M|$ and $\mathsf{m}$ are both finite, integrating in $x,\sigma$ the identity above and defining $\alpha=\widetilde{\alpha}/(2\mathsf{m}|M|), \, \zeta=\widetilde{\zeta}/(2\mathsf{m}|M|)$, by the arbitrariness of $h$ we find that a critical point $\bar{\rho}$ is given by
\begin{equation}
	\bar{\rho}(x,\sigma)=\frac{1}{Z(x)}g(\sigma)\e^{-\beta \sigma\overline{\psi}(x)}, \qquad g(\sigma)=\e^{-\alpha(\sigma)}, \qquad Z(x)=\e^{1+\zeta(x)}.
\end{equation}
The Lagrange multipliers $\beta,\alpha, \zeta$ are found as usual by imposing the constraints \eqref{energyconst}, \eqref{normalization} and \eqref{distribution} (which arise by taking the first variation of $\mathcal{L}$ with respect to the other variables). The normalization \eqref{normalization} imposes that 
\begin{equation}\label{normalization1}
	Z(x)=\int_{-\mathsf{m}}^{\mathsf{m}}g(\sigma)\e^{-\beta \sigma\overline{\psi}(x)}\dd \sigma.
\end{equation}
Thus, the {coarse grained vorticity} $\overline{\omega}$ associated to the distribution $\bar{\rho}$ is 
\begin{equation}
	\label{eq:defF}
	\overline{\omega}=\int_{-\mathsf{m}}^{\mathsf{m}}\sigma\bar{\rho}(x,\sigma)\dd \sigma=\frac{\int_{-\mathsf{m}}^{\mathsf{m}}\sigma g(\sigma)\e^{-\beta \sigma\overline{\psi}(x)}\dd \sigma}{\int_{-\mathsf{m}}^{\mathsf{m}}g(\sigma)\e^{-\beta \sigma\overline{\psi}(x)}\dd \sigma}:=F(\overline{\psi}),
\end{equation}
which readily implies that $\overline{\omega}$ is a stationary solution of the Euler equations. Using the formula \eqref{normalization1}, the  functional relation \eqref{eq:defF} can be written as 
\begin{equation}
	\overline{\omega}=-\frac{1}{\beta}\frac{\dd}{\dd \psi}\ln(Z).
\end{equation}
Denoting $\overline{\omega^2}=\int_{-\mathsf{m}}^\mathsf{m}\sigma^2\bar{\rho}(x,\sigma)\dd \sigma$, a direct computation shows that 
\begin{align}
	&F'(\overline{\psi})=-\frac{1}{\beta}\frac{\dd^2}{\dd \psi^2}\ln(Z)=-\beta(\overline{\omega^2}-\overline{\omega}^2).
\end{align}
Namely, the function $F$ is related to the variance of the distribution $\bar{\rho}$. By the Jensen's inequality 
\begin{equation}
	\label{eq:inqF}
	\overline{\omega^2}(x)-\overline{\omega}^2(x)\geq 0,	\qquad \text{ a.e. in } M,
\end{equation} 
so $F$ is a monotone function. 

\begin{remark}
	In the literature \cite{bouchet2008simpler,chavanis2002statistical}, it is often assumed that the inequality in \eqref{eq:inqF} is strict or  that the function $F$ is strictly monotone. However, it is not always possible to conclude that $F$ is strictly monotone.  Indeed, assume that $F$ is strictly monotone. Then, in \eqref{eq:inqF} we have a strict inequality and integrating in $x$ \eqref{eq:inqF}, using \eqref{rhodef}-\eqref{barom}, we obtain 
	\begin{equation}
		0< \int_M (\overline{\omega^2}(x)-\overline{\omega}^2(x))\dd x=\iint_{M\times M} \omega_0^2(y)K(x,y)\dd x \dd y-\int_M\overline{\omega}^2(x)\dd x=\int_{M} (\omega_0^2(x)-\overline{\omega}^2(x))\dd x.
	\end{equation}
	 However, since $\overline{\omega}=\bar{K}\omega_0$, if $\omega_0$ is an $f$-minimal flow the previous inequality is not possible in view of Lemma \ref{prop:eqstric}. Thus, $F$ cannot be strictly monotone if  $\omega_0$ is an $f$-minimal flow. Moreover, if $\omega_0$ is an $f$-minimal flow the only possibility in \eqref{eq:inqF} is that equality holds a.e. in $M$, which implies
	 \begin{equation}
	 	\int_{M} \omega_0^2(x)=	\int_{M}\overline{\omega}^2(x)\dd x.
	 \end{equation}
 As shown in the proof of Lemma \ref{prop:eqstric} (see \S \ref{subsec:minimin}), a consequence of the latter identity is that $\omega_0$ and $\overline{\omega}$ are equimeasurable. Since  $F$  is constant in this particular case, we have also $\overline{\omega}$ is a constant and therefore $\omega_0$ must be a constant. But it is not true in general that  $f$-minimal flows are constant. This apparent paradox has a simple solution. In \eqref{def:LMRS} the Lagrange multiplier in front of $\mathcal{S}$ has been omitted, implicitly assuming a nondegeneracy condition for the problem \eqref{MRSoriginal}. However, if we start with an $f$-minimal flow $\omega_0$, the problem can be degenerate and the coarse grained vorticity $\bar{\omega}$ must be associated to a rearrangement of $\omega_0$ with the same energy, since no other elements are present in  $\overline{\cO_{{\omega_0}}}^*\cap \{\mathsf{E}=\mathsf{E}_0\}$ for a $f$-minimal flow. The possibility of such degenerate behavior of the variational problem is included in our characterization \textit{(ii)} in Theorem \ref{varprop}, in which we cannot exclude the case $\alpha=0$.
 
 We conclude with the observation that in simulations of 2d Euler at long times, it appears that there can be regions of constant vorticity embedded in the non-constant background.  If such possibilities persist indefinitely and in a weak-$*$ sense, it would exhibit the necessity for allowing  not strictly monotone $F$.
\end{remark}

On the other hand, if the function $F$ defined in \eqref{eq:defF} is strictly monotone, we have the following observation due to Bouchet \cite{bouchet2008simpler} (see also \S 7.4 of Chavanis \cite{chavanis2002statistical}):
\begin{proposition}
Let $\overline{\omega}=F(\overline{\psi})$ be given as in \eqref{eq:defF}. Assume that $F'>0$. Let $G(s)=\int_{-\mathsf{m}}^sF^{-1}(s)\dd s$. Then, $\overline{\omega}$ is a minimizer of the problem 
\begin{equation}
	\label{def:vpbar}
	\mathsf{I}_G(\overline{\omega})=\min\{ \mathsf{I}_G(\omega) \, :\, \omega \in X, \quad \mathsf{E}[\omega]=\mathsf{E}[\omega_0]\}.
\end{equation}
Moreover, $\overline{\omega}$ is a $G$-minimal flow.
\end{proposition}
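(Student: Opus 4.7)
My plan is to combine the strict convexity of $G$ with the identity $G'(\overline{\omega})=\overline{\psi}$ coming from $\overline{\omega}=F(\overline{\psi})$, and then close the resulting inequality via Cauchy--Schwarz using the energy constraint. The hypothesis $F'>0$ means $F$ is strictly increasing, so $G'(s)=F^{-1}(s)$ is strictly increasing and $G$ is strictly convex. Applying the pointwise convexity inequality and integrating over $M$,
\begin{equation*}
\mathsf{I}_G(\omega)-\mathsf{I}_G(\overline{\omega})\ \ge\ \int_M G'(\overline{\omega})(\omega-\overline{\omega})\,\rmd x\ =\ \int_M \overline{\psi}\,(\omega-\overline{\omega})\,\rmd x,
\end{equation*}
where the last step uses $G'(\overline{\omega})=F^{-1}(F(\overline{\psi}))=\overline{\psi}$.

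Next I would rewrite the right-hand side using the Biot--Savart structure. Self-adjointness of $\Delta^{-1}$ together with two integrations by parts (valid because $\psi$ and $\overline{\psi}$ are constant on each component of $\partial M$) gives $\int_M \overline{\psi}\,\omega\,\rmd x=-\int_M \overline{u}\cdot u\,\rmd x$ and $\int_M \overline{\psi}\,\overline{\omega}\,\rmd x=-\int_M|\overline{u}|^2\,\rmd x=-2\mathsf{E}(\overline{\omega})$, so
\begin{equation*}
\int_M \overline{\psi}\,(\omega-\overline{\omega})\,\rmd x\ =\ \|\overline{u}\|_{L^2}^2-\int_M \overline{u}\cdot u\,\rmd x.
\end{equation*}
The energy constraint $\mathsf{E}(\omega)=\mathsf{E}(\overline{\omega})$ forces $\|u\|_{L^2}=\|\overline{u}\|_{L^2}$, and Cauchy--Schwarz yields $\int_M \overline{u}\cdot u\,\rmd x\le \|\overline{u}\|_{L^2}\|u\|_{L^2}=\|\overline{u}\|_{L^2}^2$. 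Hence the right-hand side is nonnegative and $\mathsf{I}_G(\omega)\ge \mathsf{I}_G(\overline{\omega})$ for every admissible $\omega\in X\cap\{\mathsf{E}=\mathsf{E}_0\}$.

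For the minimal-flow claim, I would observe that $\overline{\omega}=K\omega_0$ for some bistochastic kernel $K$ by \eqref{barom}, so $\overline{\omega}\in\overline{\mathcal{O}_{\omega_0}}^*$; combined with \eqref{energyconst} this places $\overline{\omega}$ in $\overline{\mathcal{O}_{\omega_0}}^*\cap\{\mathsf{E}=\mathsf{E}_0\}$. Since $\overline{\omega}$ minimizes the strictly convex Casimir $\mathsf{I}_G$ over the larger set $X\cap\{\mathsf{E}=\mathsf{E}_0\}$, it also minimizes it over this subset, and Theorem~\ref{varprop} then identifies $\overline{\omega}$ as a minimal flow in Shnirelman's sense.

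The delicate step is the Cauchy--Schwarz closure, which is precisely where the assumption $F'>0$ enters: the effective Lagrange multiplier has the sign that makes the quadratic energy correction favorable, and no spectral-gap condition on $M$ is required. In the opposite regime $F'<0$ the analogous bound would survive only under an Arnold-type restriction of the form $-\lambda_1<F'<0$; here the globally positive slope makes the inequality unconditional on the domain geometry.
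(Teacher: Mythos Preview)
Your argument is correct and follows the same underlying strategy as the paper, which simply states that the minimizer claim ``directly follows by the Lagrange multiplier rule'' after checking that $G'(\overline{\omega})=\overline{\psi}$ (i.e., the critical-point equation with multiplier $\beta=1$). You go a step further and actually justify sufficiency: writing the convexity inequality for $G$ and then closing with Cauchy--Schwarz on the velocities is precisely the verification that the critical point is a global minimizer (equivalently, that $\mathsf{I}_G+\mathsf{E}$ is convex and hence its critical point minimizes, after which one restricts to the energy shell). The second part---placing $\overline{\omega}$ in $\overline{\mathcal{O}_{\omega_0}}^*\cap\{\mathsf{E}=\mathsf{E}_0\}$ via the bistochastic representation and invoking Theorem~\ref{varprop}---is identical to the paper's reasoning. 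Your closing remark on why $F'>0$ is essential (the multiplier has the ``right'' sign so no spectral condition is needed) is a nice clarification that the paper leaves implicit.
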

\begin{proof}
	The fact that $\overline{\omega}$ is a minimizer of \eqref{def:vpbar} directly follows by the Lagrange multiplier rule, which give us that a solution of \eqref{def:vpbar} can be found by solving
	\begin{equation}
		G'(\widetilde{\omega})-\beta \widetilde{\psi}=F^{-1}(\widetilde{\omega})-\beta \widetilde{\psi}.
	\end{equation} 
Indeed, $\widetilde{\omega}=\overline{\omega}=F(\overline{\psi})$ and $\beta=1$ satisfy the previous identity. To prove that $\overline{\omega}$ is a $G$-minimal flow, let $\bar{K}$ be a bistochastic operator that represents the Young's measure $\overline{\rho}$ as in \eqref{rhodef}, where $\overline{\rho}$ is the solution of \eqref{MRSoriginal}. Appealing to \eqref{rhodef}, we get  $\overline{\omega}=\bar{K}\omega_0$. Since we also know that $\mathsf{E}[\omega]=\mathsf{E}[\omega_0]$ then $\overline{\omega}\in \overline{\cO}^*_{\omega_0}\cap \{\mathsf{E}=\mathsf{E}_0\}$ and in particular is a minimizer of the strictly convex functional $\mathsf{I}_G$ in $\overline{\cO}^*_{\omega_0}\cap \{\mathsf{E}=\mathsf{E}_0\}$, thus solving a variational problem as in \eqref{vp}. By Theorem \ref{varprop}, we infer that $\overline{\omega}$ is a $G$-minimal flow. Note that in this particular case we can choose $\Phi=0, \alpha=1, \beta=1, \gamma=0$ in point \textit{(ii)} of Theorem \ref{varprop}.
\end{proof}
\begin{remark}
It would be interesting to determine whether or not the vorticity $\overline{\omega}$ arising from a Miller, Robert and Sommeria theory always corresponds to  an $f$-minimal flow also when $F$ is not strictly monotone.
\end{remark}

\section{Some Questions}

In general, statements about the set  $\mathcal{O}_{\omega_0,{\mathsf E}_0}$ cannot say anything definitive about long time dynamics of Euler.  However, some qualitative information may be gained if certain questions are answered. Specifically

\begin{question} [Weak but not strong relaxation to equilibrium]
Can one characterize $\omega_0\in X$ such that the resulting Euler solution $S_t(\omega_0)$ cannot converge strongly in $L^2$ to equilibrium.   This is related to the works \cite{ginzburg1992topology,ginzburg1994steady,izosimov2017characterization} since, to answer the above,  one needs to  show the strong $L^2$ closure of set $\mathcal{O}_{\omega_0,{\mathsf E}_0}$ contains no stationary Euler solutions.  The quoted papers consider this question without the closure.
\end{question}

\begin{question} [Isolation from smooth stationary states]
Can one characterize (or give conditions on) $\omega_0\in X$ such that there exists a smooth stationary Euler solution in $ \overline{\mathcal{O}_{\omega_0}}^*\cap    \{ {\mathsf E}= {\mathsf E}_0\}$?  {More generally, under which conditions are minimizers of \eqref{vp} smoother (or rougher) than the data $\omega_0$?  Note that smoothing takes place if $\omega_0$ is comprised of, say, two patches of constant vorticity.  See Appendix \ref{finiteconst}.}
\end{question}

\begin{question} [Vortex mergers]
When do minimizers of \eqref{vp} have different vortex line topology than that of $\omega_0$?
\end{question}

The next two questions will indirectly concern special stable stationary Euler solutions; constant vorticity states  ${\omega}_*$ and Arnold stable states  $\omega_{\mathsf{A}}$.  It is easy to see that constant vorticity states are the unique functions in $X$ having the property that
\be
\mathcal{O}_{{\omega}_*}  = \mathcal{O}_{{\omega}_*,{\mathsf E}_*}= \overline{\mathcal{O}_{{\omega}_*,{\mathsf E}_*}}^*= \overline{\mathcal{O}_{\omega_0}}^*\cap    \{ {\mathsf E}= {\mathsf E}_*\} = \Omega_+(\omega_*) = \{ {\omega}_*\} .
\ee
 On the other hand, Arnold stable states $\omega_{\mathsf{A}}$ have the property $\Omega_+(\omega_{\mathsf{A}}) =\overline{\cO_{\omega_{\mathsf{A}},\mathsf{E}_{\mathsf{A}}}}^*=\{\omega_{\mathsf{A}}\}$, since any mixing of them must change the energy.

\begin{question} 
Does there exist $\omega_*\in X$ such that 
\be
 \overline{\mathcal{O}_{{\omega}_*,{\mathsf E}_*}}^*\neq \overline{\mathcal{O}_{\omega_*}}^*\cap    \{ {\mathsf E}= {\mathsf E}_*\}?
\ee
\end{question}
A positive answer to the above question could imply that some of the maximal mixing states discussed here are dynamically inaccessible.

\begin{question} [Structure in perturbative regime]
Can anything more be said about the structure of the sets $ \overline{\mathcal{O}_{\omega_0}}^*\cap    \{ {\mathsf E}= {\mathsf E}_0\}$ and, in particular, the minimizers in \eqref{varprop} for $\omega_0\in X$  which are perturbations of constant vorticity fields or Arnold stable steady states?
\end{question}

\begin{question} [weak-$*$ Ergodicity?]
Other than Arnold stable steady states and those with constant vorticity, is it ever the case that  $\Omega_+(\omega_0) =  \overline{\mathcal{O}_{\omega_0}}^*\cap    \{ {\mathsf E}= {\mathsf E}_0\}$ for some $\omega_0\in X$? 
\end{question}

We remark that if a $\omega_0$ is a saddle-type critical point of the energy on $\mathcal{O}_{\omega_0}$ (as opposed to a minimum or maximum as in the Arnold stable case), then $\{\omega_0\}=\Omega_+(\omega_0) \neq  \overline{\mathcal{O}_{\omega_0}}^*\cap    \{ {\mathsf E}= {\mathsf E}_0\}$.

Numerical simulations and physical experiments often point to the conclusion that at long times the solution does not become truly stationary, but rather enters some ordered time dependent regime.  We ask

\begin{question} [Existence of recurrent solutions]
For which $\omega_0\in X$ do there exist time periodic or quasi-periodic  Euler solutions in $\overline{\mathcal{O}_{\omega_0}}^*\cap    \{ {\mathsf E}= {\mathsf E}_0\}$?  
\end{question}
Note that these sets \emph{always} contain at least one $L^2$--precompact Euler orbit since  $\Omega_+(\omega_0) \subset\overline{\mathcal{O}_{\omega_0}}^*\cap    \{ {\mathsf E}= {\mathsf E}_0\}$ and those are known to contain them (see Theorem \ref{contL2orb} in Appendix \ref{secomegalim},  due to {\v{S}}ver{\'a}k). These precompact orbits are, in fact, minimizers of some strictly convex functional in the set $\Omega_+(\omega_0)$. However, on this set (unlike on $\overline{\mathcal{O}_{\omega_0}}^*\cap    \{ {\mathsf E}= {\mathsf E}_0\}$) there is a-priori no reason why such a minimizer should be a steady state. 

Finally, observations indicate that, over time, the vorticity/velocity fields that emerge are far less diverse than the phase space of the dynamics.  This apparent ``decrease of entropy" demands a theoretical explanation. We ask

\begin{question}[Entropy Decrease for Euler]\label{entquest}
Is  $\Omega_+(X):= \bigcup_{\omega_0\in X} \Omega_+(\omega_0)$ a strict subset of $X$?  The conjecture of Shnirelman is that the set $\Omega_+(X)$ consists of all Euler orbits in $X$ which are precompact in $L^2$.  Together with the conjecture of {\v{S}}ver{\'a}k  that generic orbits in $X$ should not be compact, this suggests that $\Omega_+(X)$ should be a ``meagre set" in $X$.
\end{question}

We point the reader to the works \cite{modin2020casimir,modin2021canonical,modin2021integrability} for very interesting conjectures related to the structure of  typical members of $\Omega_+(X)$.  See further discussion in \cite{drivas2022singularity}.

\section{The weak-$*$ closure of the orbit}
\label{sec:charOmset}

Here we prove the following characterizations of the weak-$*$ closure of the orbit:
\begin{proposition} \label{prop:sverak}
	Consider $X, \, \mathscr{K} $ as in \eqref{def:X} and \eqref{def:bist} respectively. Given any $\omega_0\in X$, we have
\begin{align}\label{def:shnirset}
\overline{\mathcal{O}_{\omega_0}}^*&=\{\omega \in X: \ \omega=K\omega_0\ \text{ for } K\in\mathscr{K}\},\\ \label{def:sverakset}
&= \left\{\omega\in X \ : \ \int_M \omega \, \dd x=\int_M \omega_0\, \dd x, \ \ \text{and }  \ \  \int_M (\omega-c)_+\, \dd x\leq \int_M (\omega_0-c)_+\, \dd x \ \ \text{ for all } \ \ c\in \mathbb{R}  \right\},\\\label{def:casimirset}
&= \left\{\omega\in X \ : \ \int_M \omega\, \dd x =\int_M \omega_0\, \dd x, \ \ \text{and }  \ \  \int_M f(\omega)\, \dd x\leq \int_M f(\omega_0)\, \dd x\ \ \text{ for any convex } f  \right\}.
\end{align}
\end{proposition}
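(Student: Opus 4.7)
The plan is to establish the equalities via the circular chain
\[
A := \overline{\mathcal{O}_{\omega_0}}^* \ \subseteq \ B := \{K\omega_0 : K \in \mathscr{K}\} \ \subseteq \ D \ \subseteq \ C \ \subseteq \ B,
\]
where $C$ and $D$ denote the sets in \eqref{def:sverakset} and \eqref{def:casimirset}, respectively. The two inclusions $A \subseteq B$ and $B \subseteq A$ (which together give \eqref{def:shnirset}) I would obtain as a direct corollary of Proposition~\ref{prop:weak}: if $\omega_0 \circ \varphi_n \wc \omega$, the associated evaluation kernels $\delta(y - \varphi_n(x))\,\rmd y \in \mathscr{E}_\mu$ admit a subsequential weak-$*$ limit $K \in \mathscr{K}$, and testing against $\varphi(x)\omega_0(y)$ (upgrading $\omega_0 \in L^\infty$ to continuous functions by a density argument) identifies $\omega = K\omega_0$; conversely, if $\omega = K\omega_0$, any weak-$*$ approximating sequence $i_{\varphi_n} \wc K$ produces $\omega_0 \circ \varphi_n \wc K\omega_0 = \omega$.

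The inclusion $B \subseteq D$ is Jensen's inequality with respect to the probability measure $K(x,\cdot)\,\rmd y$: for convex $f$,
\[
f(\omega(x)) = f\!\left( \int K(x,y)\omega_0(y)\,\rmd y \right) \leq \int K(x,y)\, f(\omega_0(y))\,\rmd y,
\]
and integrating in $x$ using the marginal $\int K(x,y)\,\rmd x = 1$ gives $\mathsf{I}_f(\omega) \leq \mathsf{I}_f(\omega_0)$; the choice $f(s) = \pm s$ also yields equality of the means. The step $D \subseteq C$ is immediate since each $s \mapsto (s-c)_+$ is convex.

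The main obstacle is closing the loop with $C \subseteq B$, which amounts to a continuum Hardy--Littlewood--P\'olya/Chong--Ryff majorization theorem. My approach would be to discretize: partition $M$ into $N$ cells of equal area $|M|/N$, replace $\omega, \omega_0$ by their cell-average approximations $\omega^N, \omega_0^N$, and show that the truncation inequalities \eqref{def:sverakset} transfer (up to errors controlled by the level-set geometry) to the classical majorization relation $\mathbf{v}^N \prec \mathbf{v}_0^N$ on the corresponding mean vectors in $\mathbb{R}^N$. The finite-dimensional HLP theorem then produces a doubly stochastic matrix $D_N$ with $\mathbf{v}^N = D_N \mathbf{v}_0^N$, which by Birkhoff--von Neumann is a convex combination of permutation matrices; lifting each permutation to a piecewise area-preserving rearrangement of $M$ assembles a bistochastic kernel $K_N$ with $\omega^N = K_N \omega_0^N$. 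Weak-$*$ compactness of $\mathscr{K}$ extracts a subsequential limit $K_N \wc K \in \mathscr{K}$, and combining with $\omega_0^N \to \omega_0$ in $L^1$ and $\omega^N \wc \omega$ identifies $\omega = K\omega_0$.

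The delicate technical points in this last step are (a) designing the partition compatibly with the level-set structure of \emph{both} $\omega$ and $\omega_0$ so that the truncation inequalities survive averaging (the nonlinear operation $(\cdot - c)_+$ does not commute with cell averaging), and (b) justifying the passage to the limit $N \to \infty$ in the bistochastic representation. I would handle (a) by first mollifying $\omega, \omega_0$ to continuous functions so that $(\omega - c)_+$ and $(\omega_0 - c)_+$ have vanishing oscillation on cells of small diameter, then running the discretization on the mollified pair and taking $N \to \infty$ in a diagonal argument; (b) is then a soft compactness argument using $\|K_N\|_{L^\infty \to L^\infty} \leq 1$ uniformly. The smoothness of $\partial M$ keeps the lift of permutations to genuine rearrangements of $M$ straightforward.
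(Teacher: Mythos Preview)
Your chain of inclusions and the arguments for $A=B$, $B\subseteq D$, and $D\subseteq C$ are correct and essentially match the paper (Jensen for $B\subseteq D$, and the representation of a convex function via $\int(s-c)_+\,\rmd\alpha(c)$ for $C=D$).

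The gap is in your proposed fix for the hard inclusion $C\subseteq B$. Mollifying both $\omega$ and $\omega_0$ does \emph{not} preserve the majorization relation: applying the same bistochastic operator $J$ to both sides of $\omega\prec\omega_0$ can fail to yield $J\omega\prec J\omega_0$. A discrete counterexample on four atoms already shows this: with $\omega_0=(3,1,0,0)$, $\omega=(2,2,0,0)$ (so $\omega\prec\omega_0$), and $J$ the doubly stochastic matrix averaging coordinates $1$ and $3$, one gets $J\omega_0=(3/2,1,3/2,0)$ and $J\omega=(1,2,1,0)$; the largest entry of $J\omega$ is $2>3/2$, so $J\omega\not\prec J\omega_0$. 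Thus once you mollify $\omega_0$, the truncation inequalities against $\omega_0^\varepsilon$ can genuinely fail, and your discretized HLP step has no input to work with. Cell-averaging has exactly the same defect for the same reason. Your diagnosis that ``$(\cdot-c)_+$ does not commute with averaging'' is correct, but the cure you propose reintroduces the disease on the $\omega_0$ side.

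The paper avoids discretization altogether. It passes to the decreasing rearrangements $\omega^\#,\omega_0^\#$ on a ball $B_R$ (so the majorization becomes the monotone relation $\int_0^r\omega^*\le\int_0^r\omega_0^*$), proves the Hardy--Littlewood--P\'olya inequality $\int_M gK\omega_0\ge\int_{B_R}g^\#\omega^\#$ for a suitably chosen $K$ built from the rearrangement maps $\sigma_g,\sigma_{\omega_0}$, and then closes by Hahn--Banach: if $\omega\notin\mathscr K_{\omega_0}$, a separating $g\in L^\infty$ would satisfy $\int gK\omega_0<\int g\omega$ for all $K$, which the previous inequality contradicts. This functional-analytic route (due to Ryff/Day/Chong) sidesteps entirely the instability of majorization under simultaneous averaging. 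If you wish to salvage a discretization proof, the approximation must be \emph{one-sided} and adapted to the level sets (e.g.\ approximate $\omega_0$ from above and $\omega$ from below on their own level-set partitions), which then forces you to repair the mean constraint separately; this can be made to work but is substantially more delicate than what you sketched.
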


	The description of the set $\overline{\cO_{\omega_0}}^*$ has been a classical topic in rearrangement inequalities theory and the characterizations \eqref{def:shnirset}-\eqref{def:casimirset} can be found for example in \cite{chong1974some,chong1976doubly,ryff1965orbits}.  The \eqref{def:shnirset} has been used by Shnirelman \cite{shnirelman1993lattice}, while the characterization \eqref{def:sverakset} also appears in the lecture notes of {\v{S}}ver{\'a}k \cite{vsverak2012selected}.  In the following, we present a self contained proof of these characterizations.

\begin{proof} We divide the proof in several steps.
	
	\noindent $\diamond$ \textsc{Step 1: ($\overline{\cO_{\omega_0}}^*=$ \eqref{def:shnirset})}  This characterization is a direct consequence of Proposition \ref{prop:weak}, whose proof can be found for example in \cite{brenier2003p} or \cite[Sec 1.4]{brenier2020examples}. We review the main arguments of the proof since in the sequel we need to exploit  some technical lemma used for it.

As observed in Remark \ref{rem:poly}, we know $\mathscr{E}_\mu(M)\subset\mathscr{K}(M)$. Since $\mathscr{K}(M)$  is weak-$*$ closed, we infer $\overline{\mathscr{E}_\mu(M)}^*\subseteq\mathscr{K}(M)$. Thus, to prove \eqref{eq:weakK} we only have to show that for every $K\in \mathscr{K}$ there exists a sequence $\{\phi_n\}\subset {\mathscr{E}_{\mu}(M)}$ such that 
\begin{equation}
	\label{eq:convergence}
	\lim_{n\to +\infty}\int_Mf(x,\phi_n(x))\rmd x=\iint_{M\times M }f(x,y)K(x,y)\rmd x\rmd y, \qquad \text{ for all } f\in C(M\times M).
\end{equation} 
Indeed,	choosing $f(x,y)=g(x)\omega_0(y)$, we see that any element in $\overline{\mathcal{O}_{\omega_0}}^*$ is of the form $K\omega_0$, meaning that the characterization \eqref{def:shnirset} is proved. The proof of \eqref{eq:convergence} relies on the following key lemma, which we prove below.
\begin{lemma}
	\label{lemma:perm}
	Let $Q_1, Q_2\subset M$ be two squares with centers $x_1,x_2$ respectively and $|Q_1|=|Q_2|$. Let $p:M\to M$ be a permutation of these two squares, namely 
	\begin{equation}
		p(x)=\begin{cases}
			x-x_1+x_2 \qquad &\text{if }x\in Q_1,\\
			x-x_2+x_1 \qquad &\text{if }x\in Q_2,\\
			x\qquad &\text{otherwise }.
		\end{cases}
	\end{equation}
	Then, there exists $\{\varphi_n\}\in\mathscr{E}_\mu(M)$ such that $\varphi_n\to p$ in $L^2(M)$.
\end{lemma}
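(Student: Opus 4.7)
My plan is to construct $\varphi_n$ as a composition of many localized area-preserving diffeomorphisms that realize the swap of $Q_1, Q_2$ piece by piece, with total $L^2$ error going to zero.

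\smallskip

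\emph{Step 1 (A single swap).} I would first construct an area-preserving diffeomorphism $\Psi \in \mathscr{D}_\mu(M)$, isotopic to the identity, that exchanges $Q_1$ and $Q_2$ as sets. The natural device is a Hamiltonian (stream-function) flow: take a smooth radial stream function $H(r,\alpha) = \Theta(r)$ in polar coordinates centered at $x_0 := (x_1 + x_2)/2$, with $\Theta'(r) \equiv \pi$ on an inner disk containing $Q_1 \cup Q_2$ and $\Theta'(r) \to 0$ smoothly in a thin outer annulus. Its time-$1$ flow rigidly rotates the inner disk by $\pi$ and hence exchanges $Q_1$ with $Q_2$ as sets. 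However, it also rotates each square (and all intermediate points) by $\pi$. The self-rotation of each square can be undone by composing with two compactly supported twists of angle $-\pi$ centered at $x_1$ and $x_2$.

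\smallskip

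\emph{Step 2 (Localization via subdivision).} The map $\Psi$ from Step~1 differs substantially from $p$ on the annular region between the squares (which $p$ leaves fixed). To kill this error I subdivide each $Q_i$ into $n^2$ congruent sub-squares $q_i^{(k)}$ of side $\ell/n$, paired so that $p(q_1^{(k)}) = q_2^{(k)}$. For each $k$ I construct a local swap $\psi_k^{(n)} \in \mathscr{D}_\mu(M)$ that interchanges $q_1^{(k)}$ and $q_2^{(k)}$ and is supported in a thin dumbbell-shaped neighborhood $U_k^{(n)}$ comprising $q_1^{(k)} \cup q_2^{(k)}$ together with a narrow connecting corridor. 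In two dimensions the corridors can be routed pairwise disjointly by slightly displacing them transversely. Since the $\psi_k^{(n)}$ then have disjoint supports, they commute, and the composition
$$\varphi_n := \psi_1^{(n)} \circ \cdots \circ \psi_{n^2}^{(n)}$$
is a single smooth element of $\mathscr{D}_\mu(M)$.

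\smallskip

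\emph{Step 3 ($L^2$-convergence).} Outside $\bigcup_k U_k^{(n)}$ we have $\varphi_n = p = \mathrm{id}$. On each $q_1^{(k)}$, the image $\varphi_n(q_1^{(k)})$ lies inside $q_2^{(k)}$ and satisfies the pointwise bound $|\varphi_n(x) - p(x)| \lesssim \operatorname{diam}(q_2^{(k)}) = O(\ell/n)$, whence
$$\int_{Q_1 \cup Q_2} |\varphi_n - p|^2 \,\dd x \;\lesssim\; (\ell/n)^2 \,|Q_1 \cup Q_2| \;\to\; 0.$$
Choosing the corridor widths to scale so that the total corridor area vanishes controls the remaining contribution (bounded by $|x_1-x_2|^2$ times the corridor area), and yields $\varphi_n \to p$ in $L^2(M)$.

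\smallskip

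\emph{Main obstacle.} The technical heart of the argument is the explicit construction of the elementary swap $\psi_k^{(n)}$ inside a thin dumbbell neighborhood. One concrete realization uses a Hamiltonian whose level sets encircle both sub-squares, inducing a closed circulation that pumps $q_1^{(k)}$ into the position of $q_2^{(k)}$ and vice versa, together with two localized counter-twists that undo the induced self-rotation. The delicate balance lies in making the support (corridor) small enough to control the $L^2$ error while keeping the flow smooth and area-preserving. The two-dimensionality of $M$ is essential throughout: in one dimension any orientation-preserving diffeomorphism is monotone, and no such approximation of a transposition in $L^2$ is possible.
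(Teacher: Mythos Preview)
Your approach is genuinely different from the paper's, and Step~2 contains a real gap. The claim that the dumbbell supports $U_k^{(n)}$ can be taken pairwise disjoint fails as stated: the sub-squares $q_1^{(k)}$ tile $Q_1$ with no gaps, so any corridor emanating from an \emph{interior} sub-square must cross neighboring sub-squares $q_1^{(k')}$ on its way out of $Q_1$, forcing $U_k^{(n)}\cap q_1^{(k')}\neq\emptyset$. Since your Step~3 estimates rely on disjoint supports (so that only $\psi_k^{(n)}$ moves points of $q_1^{(k)}$), the argument breaks. One might try to repair this by shrinking the sub-squares to leave a thin grid of gaps and routing $n^2$ disjoint corridors through that grid, but this introduces new error terms on the gaps and a nontrivial planar-routing construction that you do not carry out. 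On top of this, the elementary dumbbell swap $\psi_k^{(n)}$ itself---which you correctly identify as the crux---is only sketched.

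The paper's proof (following Brenier) avoids corridors altogether by a reduction. First, since $L^2$-limits of area-preserving diffeomorphisms are stable under composition, it suffices to approximate a swap of two \emph{adjacent} squares (any permutation decomposes into such swaps after refining the grid). Second, a swap of adjacent squares factors as a central symmetry $x\mapsto -x$ on the enclosing rectangle followed by a central symmetry on each square. Third, the central symmetry on $Q=[-a,a]^2$ is approximated by the time-$1$ Hamiltonian flow of a smoothing of $\psi(x)=\tfrac{1}{2}\lambda\,\max\{|x_1|^2,|x_2|^2\}$, whose level sets are nested squares; after calibrating $\lambda$, each level set is rotated by exactly $\pi$. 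This yields an explicit diffeomorphism equal to $x\mapsto -x$ on $Q$ minus a set of measure $O(\varepsilon)$, hence $L^2$-close to the central symmetry. The construction is entirely local and replaces your unfinished dumbbell swap with a single clean Hamiltonian.
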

In particular, permutations of squares are in $\overline{\cO_{{\omega_0}}}^{L_2}$.  The main idea is to discretize the problem \eqref{eq:convergence} and use permutations of squares as building blocks to construct the approximating sequence $\phi_n$. This is analogous to the decomposition of a doubly stochastic matrix in terms of permutation matrices, which is the classical {Birkhoff's theorem}. More precisely, given $m$ sufficiently large, we can cover the interior of $M$ with $N_m<+\infty$ squares $\{Q_i^m\}_{i=1}^{N_m}$ of area $4^{-m}$ up to an error $O(2^{-m})$. Then, approximate the measure $$\mu_K(x,y)=K(x,y)\rmd x\rmd y$$ by 
\begin{equation}
	\gamma_m=\sum_{i,j} \mu_K(Q_{i}^m\times Q_{j}^m)\delta_{(x_{i}^m,x_j^m)},
\end{equation}
where $x_i^m$ is the center of the cube $Q_i^m$. The measure $\gamma_m$ is discrete and can be identified with a matrix $A=(a_{ij})$ where $a_{ij}=4^m\mu_K(Q_{i}^m\times Q_{j}^m)$. Since $K$ is bistochastic, the matrix $A$ is also bistochastic, i.e. $\sum_i a_{ij}=\sum_ja_{ij}=1$. We can therefore apply the Birkhoff's theorem to rewrite the matrix as a convex combination of permutation matrices, namely
\begin{equation}
	a_{ij}=\sum_{k=1}^{K}\theta_k\delta_{\sigma_k(i),j}, \qquad \sum_{k=1}^K\theta_k=1,
\end{equation}
where $K\leq N_m^2$ and $\sigma$ is a permutation of $\{1,\dots N_m\}$. A permutation of squares can be approximated with a permutation matrix. Indeed, if $p_{\sigma}$ is the permutation of the squares $Q_i^m,Q_{\sigma(i)}^m$, then 
\begin{equation} 
	\sum_{i}\int_{Q_{i}^m}f(x,p_\sigma(x))\rmd x=4^{-m} \sum_{i}f(x_i^m,x_{\sigma(i)}^m)+C\eta(2^{-m}),
\end{equation}
where $\eta$ is the modulus of continuity of $f$. We are associating the discrete measure $4^{-m}\delta_{(x_i^m,x_{\sigma(i)}^m)}$ to $p_\sigma$ up to a small error. Therefore, the proof of \eqref{eq:convergence} is a standard approximation argument combined with the Birkhoff theorem and Lemma \ref{lemma:perm}. We refer to \cite[Sec 1.4]{brenier2020examples} for a detailed proof of the approximation argument. Instead, let us show the proof of Lemma \ref{lemma:perm}, see \cite[Lemma 1.2]{brenier2003p}, which we are going to use also in the proof of Theorem \ref{varprop}.

\begin{proof}[Proof of Lemma \ref{lemma:perm}]
	First observe that if $\varphi^1_n,\varphi^2_n\in \mathscr{E}_\mu (M)$ and $\varphi^1_n\to h_1$, $\varphi^2_n\to h_2$ in $L^2(M)$ then $\varphi^1_n\circ \varphi^2_n\to h_1\circ h_2$ in $L^2(M)$. Hence, it is enough to prove that we can exchange two adjacent squares, since any permutation of squares can be written as a combination of exchanges between adjacent squares (refining further the grid covering $M$ if necessary). To exchange adjacent squares, it is enough to approximate the central symmetry with respect to squares and rectangles\footnote{Equivalently, we could also exchange adjacent triangles. This can be useful to extend the proof to smooth compact manifolds.}. For instance, given $Q=[-a,a]^2$, we need to approximate the map $c(x)=-x$ if $x\in Q$ and $c(x)=x$ otherwise. Notice that $Q$ can be written as the union of the level sets for the function
	\begin{equation*}
		g(x)=\max\{|x_1|,|x_2|\}, \quad \text{so that } \quad Q=\{x| \ g(x)\leq a\}.
	\end{equation*}
 The idea is now to use the function $g$ to construct a velocity field which moves the particles along the streamlines, where the velocity can be tuned in order to reach the point $-x$ at time $t=1$ (a rigid rotation), see Figure \ref{cubes}.
	\begin{figure}[h!]
	\begin{tikzpicture}[scale=1]
		\foreach \x in{0,5.5,...,11}
		{
		\draw[very thick]  (\x,0)--(\x+4,0)--(\x+4,2)--(\x,2)--cycle;
		\draw[very thick] (\x+2,0)--(\x+2,2);
	}
		\foreach \x in{0,5.5}
		{
		\draw[very thick,->] (\x+4.2,1)--(\x+5.2,1);}

	   \filldraw [fill=gray,  opacity = .5] (0,0)--(2,0)--(2,2)--(0,2)--cycle;
        	
			\draw[dashed, blue] (0,2)--(4,0);
			\draw[dashed, blue] (0,0)--(4,2);

			  \path [draw, redroma, thick, postaction={on each segment={mid arrow=black}}]
			    (1,1.5)-- (1,.5)-- (3,.5)-- (3,1.5)--cycle;
			  
			        \filldraw[fill=redroma ] (.5,.25) circle (3pt);
			  \filldraw[fill=yroma ] (2.5,.25) circle (3pt);

\filldraw [fill=gray,  opacity = .5] (2+5.5,2)--(4+5.5,2)--(4+5.5,0)--(2+5.5,0)--cycle;

\draw[dashed, blue] (5.5,2)--(2+5.5,0);
\draw[dashed, blue] (5.5,0)--(2+5.5,2);
\draw[dashed, blue] (2+5.5,2)--(4+5.5,0);
\draw[dashed, blue] (2+5.5,0)--(4+5.5,2);

\path [draw, redroma,  thick, postaction={on each segment={mid arrow=black}}]
(.5+5.5,.5)-- (1.5+5.5,.5)-- (1.5+5.5,1.5)-- (.5+5.5,1.5)--cycle;
\path [draw, redroma,  thick, postaction={on each segment={mid arrow=black}}]
(.5+5.5,.5)-- (1.5+5.5,.5)-- (1.5+5.5,1.5)-- (.5+5.5,1.5)--cycle;
\path [draw, redroma,  thick, postaction={on each segment={mid arrow=black}}]
(2.5+5.5,.5)-- (3.5+5.5,.5)-- (3.5+5.5,1.5)-- (2.5+5.5,1.5)--cycle;

\filldraw[fill=redroma ] (3.5+5.5,1.75) circle (3pt);
\filldraw[fill=yroma ] (.5+5.5,1.75) circle (3pt);

\filldraw [fill=gray,  opacity = .5] (2+11,2)--(4+11,2)--(4+11,0)--(2+11,0)--cycle;

\filldraw[fill=yroma ] (.5+11,.25) circle (3pt);
\filldraw[fill=redroma ] (2.5+11,.25) circle (3pt);

		\draw  (2,2) node[anchor=south] {$t=0$};
\draw  (2+5.5,2) node[anchor=south] {$t=1/2$};
\draw  (2+11,2) node[anchor=south] {$t=1$};
	\end{tikzpicture}
		
		\caption{First we act with the central symmetry for the rectangle. Then we use the central symmetry in each squares.}
		\label{cubes}
	\end{figure}
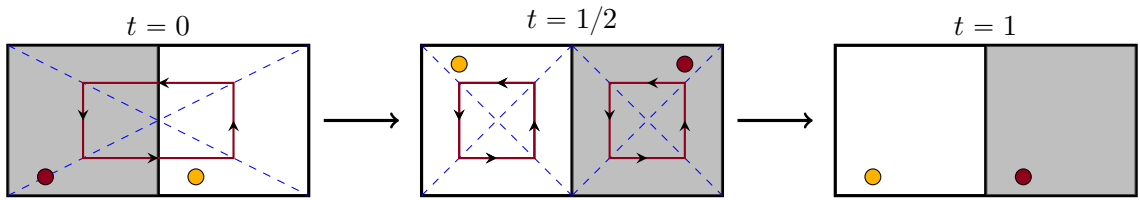  

 In this case, since $g$ is not differentiable everywhere, we cannot directly use $\nabla^\perp g$. However, it is enough to approximate $g$ on a smaller domain. In polar coordinates one has 
	\begin{equation}
		g(r,\theta)=r^2\max\{\cos^2(\theta),\sin^2(\theta)\}=r^2(1+|\cos(2\theta)|):=r^2f(\theta),
	\end{equation}
	so that a possible approximation of $g$ is given by 
	\begin{equation}
		r^2f_{\eps}(\theta):=r^2(1+\sqrt{\eps^2+\cos^2(2\theta)}).
	\end{equation}
	Also at the origin we may have problems, but since we are looking for an approximation up to zero Lebesgue measure sets, it is enough to prove that we approximate the central symmetry on the set $Q_\eps=\{\eps<r^2f_\eps(\theta)\leq 2-\eps\}$.  This can be proved by defining the streamfunction 
	\begin{equation}
		\psi_\eps(r,\theta)=\frac12\lambda_\eps r^2f_\eps(\theta), \qquad \lambda_\eps =\int_0^\pi \frac{\dd s}{f(s)}>0,
	\end{equation}
	with associated velocity field $\bv_\eps=\nabla^\perp \psi_\eps$, for which it is not difficult to show that $\bv$ moves a particle $x$ to $-x$ in time $t=1$, see \cite{brenier2003p}. The flow generated by $\de_t\phi_\eps=\bv_\eps(\phi_\eps)$ is such that $\phi_\eps(1,x)=-x$ on $Q_\eps$. Once this is is done, we can  choose $\eps=2^{-n}$ and define $c_n= {\rm id}$ on $M\setminus Q$, $c_n(x)=\phi_{\eps}(1,x)$ on $Q_\eps$ and any smooth approximation between ${\rm id}$ and $\phi_\eps$ on $Q\setminus Q_\eps$. Then, $c$ and $c_n$ are equal up to a set of measure $O(2^{-n})$ and thus, being clearly uniformly bounded, $c_n \to c$ in $L^2(M)$. 
	
	For the central symmetry with respect to a rectangle $R=[-a,a]\times[-b,b]$, just notice that $R=\{x| \max\{|x_1|/a,|x_2|/b\}\leq 1\}$, so we can repeat the construction above modifying the function $g$.
\end{proof}

$\diamond$ \textsc{Step 2: (\eqref{def:shnirset} = \eqref{def:sverakset})} 
Since $K$ is bistochastic and $(\cdot)_+$ is convex, by Jensen's inequality (see \eqref{eq:bdJ}) it follows that  
\begin{align}
\mathscr{K}_{\omega_0}&:=\{\omega \in X: \ \omega=K\omega_0\ \text{ for } K\in\mathscr{K}\}\\
&\subseteq\mathscr{S}_{\omega_0}:= \left\{\omega\in X \ : \ \int_M \omega\, \dd x =\int_M \omega_0\, \dd x, \  \ \notag \  \int_M (\omega-c)_+\, \dd x\leq \int_M (\omega_0-c)_+ \, \dd x\ \ \text{ for all } \ \ c\in \mathbb{R}  \right\}.
\end{align}
 It thus remain to prove that given an element $\omega\in \mathscr{S}_{\omega_0}$, there exists $K\in \mathscr{K}_{\omega_0}$ such that $\omega=K\omega_0$. This is indeed a classical result in \textit{rearrangement inequalities} \cite{chong1976doubly} which we prove below.
 
 For any set $A\in \RR^2$ define $A^\#$ as the ball centered at the origin such that $|A|=|A^\#|$. Given a function $f$, its distribution function is given by 
 \begin{equation}
 \label{def:df}
 d_f(t)=|\{x\in M: \ f(x)> t\}| \qquad \text{for any } t\in \RR.
 \end{equation}
 The \textit{Hardy-Littlewood-Polya decreasing rearrangement} \cite{hardy1929some} is defined as
 \begin{equation}
 \label{def:HLPrear}
 f^*(s)=\sup\{\tau \in \RR: \ d_f(\tau)>s\} \qquad \text{for } s\in [0,|M|),
 \end{equation} 
 and the \textit{Schwarz spherical decreasing rearrangement} is given by
 \begin{equation}
f^\#(x)=f^*(\pi |x|^2), \qquad \text{for } x\in B_R(0), \quad R=\sqrt{|M|/\pi}.
\end{equation}
 The function $f^\#$ is obtained by rearranging the level sets of $f$ in a symmetric and radially decreasing way. The functions $f,f^*$ are equimeasurable, and hence also $f^\#$. This implies that 
\begin{equation}
\label{eq:hash}
\{f>t\}^\#=\{f^\#>t\},
\end{equation}
since both sets are balls centered at the origin with the same volume. We are also going to use the \textit{Hardy-Littlewood-Polya inequality} which reads as 
\begin{equation}
\label{bd:HLP}
\int_M fg \, \dd x\leq \int_{B_R} f^\#g^\# \,\dd x=\int_{[0,R]} f^* g^* \, \dd s.
\end{equation}
This can be easily proved through the layer cake decomposition.
To prove $\mathscr{S}_{\omega_0}=\mathscr{K}_{\omega_0}$, we first observe that the following conditions are equivalent:
\begin{itemize}
\item[(i)] $f \in \mathscr{S}_{g}$
\item[(ii)] $\int_0^s f^*(\tau)\dd \tau \leq \int_0^s g^*(\tau)\dd \tau$ for any $s\in [0,|M|]$ and $\int_0^{|M|}f^*(\tau)\dd \tau=\int_0^{|M|} g^*(\tau)\dd \tau$
\item[(iii)] $\int_{B_r} f^\#\dd x\leq \int_{B_r} g^\#\dd x$ for any $r\in [0,R]$ and $\int_{B_R}f^\#\dd x=\int_{B_R} g^\#\dd x$, where $R=\sqrt{|M|/\pi}$.
\end{itemize} 
The equivalence between (ii) and (iii) is straightforward (using the relation $s = \pi r^2$ to transition between the area of the level sets and the radius of the ball), while (i)$\iff$ (ii) is proved in \cite[Theorem 1.6]{chong1974some}.
If (ii) holds, for any $u\geq 0$ we have 
\begin{equation}
\label{bd:HLP1}
\int_M fu\, \dd x\leq \int_{B_R} f^\# u^\#\,\dd x\leq \int_{B_R}g^\#u^\#\,\dd x
\end{equation}
where the first inequality is \eqref{bd:HLP}. To prove the last inequality above, we can rewrite the integrals in terms of the 1D rearrangements over the area variable $s \in [0, |M|]$. Let $u=\sum_{i=0}^{N} a_i \chi_{A_i}$ with $a_i> a_{i+1}\geq 0$. Then $u^*=\sum_{i=0}^N a_i \chi_{[s_i,s_{i+1}]}$ with $$s_0=0, \quad s_{N+1}=|M|, \quad s_{i+1}-s_i=|A_i|.$$ 
Defining $F(s)=\int_0^s f^*(\tau)\dd \tau$ and $G(s)=\int_0^s g^*(\tau)\dd \tau$, observe that 
\begin{equation}
\int_{B_R} (f^{\#}-g^\#)u^\#\,\dd x=\int_0^{|M|} u^*(s)\frac{\dd}{\dd s}(F-G) \dd s=\sum_{i=0}^N a_i\Big((F(s_{i+1})-G(s_{i+1}))-(F(s_{i})-G(s_{i}))\Big).
\end{equation}
Then, since $F(0)=G(0)=0$ and $F(|M|)=G(|M|)$ by the conservation of the mean, applying summation by parts we deduce that
\begin{equation}
\sum_{i=0}^N a_i\Big((F(s_{i+1})-G(s_{i+1}))-(F(s_{i})-G(s_{i}))\Big)=\sum_{i=1}^{N}(a_{i-1}-a_{i})(F(s_i)-G(s_i))\leq 0
\end{equation}
and the last inequality follows since $a_{i-1}\geq a_i$ and $F(s_i)\leq G(s_i)$ on account of (ii) above. The general case is recovered by a standard approximation argument.

Finally, given $\omega\in \mathscr{S}_{\omega_0}$, assume that $\omega \notin \mathscr{K}_{\omega_0}$. We now follow the arguments in \cite{ryff1965orbits,day1973decreasing}. Since $\mathscr{K}_{\omega_0}$ is convex and weakly closed in $L^1$, if $\omega \in L^\infty(M)\setminus\mathscr{K}_{\omega_0}\subset L^{1}(M)\setminus \mathscr{K}_{\omega_0}$, by the Hahn-Banach theorem there exists $g\in L^\infty$ such that 
\begin{equation}
	\label{bd:hbcontr}
\int_M gK \omega_0\, \dd x<\int_M g \omega\, \dd x, \qquad \text{for any } K\in \mathscr{K}.
\end{equation} 
Since $\int K\omega_0=\int\omega_0=\int \omega$, we can assume that $g\geq 0$. Then, for each $f\in L^1$ there exists a measure preserving map $\sigma_f:M\to B_R$ such that $f=f^\#\circ \sigma_f$ \cite{day1973decreasing}.
 Hence, let $g=g^\#\circ \sigma_g$ and $\omega_0=\omega_0^\#\circ \sigma_{\omega_0}$. Now, if $\sigma_g$ and $\sigma_{\omega_0}$ are one-to-one it is enough to choose $K(x,y)=\delta(y-(\sigma_{\omega_0}^{-1}\circ \sigma_g)(x))$ to get 
 \begin{align}
 \int_{M} gK\omega_0  \, \dd x=\int_M (g^\#\omega_0^\#)\circ\sigma_g\, \dd x=\int_{B_R} g^\#\omega_0^\#\, \dd x\geq\int_{B_R} g^\#\omega^\#\, \dd x\geq \int_M g\omega \, \dd x
\end{align} 
where the last two bounds follows by \eqref{bd:HLP1}, but this is a contradiction and hence $\omega \in \mathscr{K}_{\omega_0}$.
When $\sigma_g$ and $\sigma_{\omega_0}$ are not one-to-one, we need to define bistochastic operators $\widetilde{K}:L^1(B_R)\to L^1(M)$ with adjoint $\widetilde{K}^*: L^\infty(M)\to L^\infty(B_R)$ where 
\begin{equation}
\int_M f\widetilde{K} g\, \dd x=\int_{B_R}g\widetilde{K}^* f\, \dd x.
\end{equation}
The operators $\widetilde{K}$ are the weak-$*$ closure of area preserving diffeomorphisms from $B_R$ to $M$. If $\widetilde{K}$ is associated to an area preserving map then $$\widetilde{K}^*\widetilde{K}={\rm id}.$$ This extension is necessary since if $\widetilde{K}(x,y)=\delta(y-\sigma(x))$ for $\sigma:B_R\to M$ area preserving map then $\widetilde{K}^*$ is not in general associated to an area preserving map \cite{ryff1965orbits}. Anyway, we know that $g=g^\#\circ \sigma_g:=\widetilde{K}_1 g^\# $ and $\omega_0=\omega_0^\#\circ \sigma_{\omega_0}:=\widetilde{K}_{2}\omega_0^\#$. Choosing  $K=\widetilde{K_1}\widetilde{K_2}^*:L^\infty (M)\to L^\infty (M)$, with $K\in \mathscr{K}$, we conclude
 \begin{align}
 \int_{M} gK\omega_0\, \dd x=\int_M (\widetilde{K_1}g^\#)\widetilde{K_1}(\widetilde{K_2}^*\widetilde{K_2}\omega_0^\#)\, \dd x=\int_{B_R} g^\#\omega_0^\#\, \dd x\geq\int_{B_R} g^\#\omega^\#\, \dd x\geq \int_M g\omega\, \dd x ,
\end{align} 
which is a contradiction with \eqref{bd:hbcontr}, meaning that we must have $\omega \in \mathscr{K}_{\omega_0}$.
\vspace{2mm}

$\diamond$ \textsc{Step 3: (\eqref{def:sverakset} = \eqref{def:casimirset})} This was proved in \cite[Theorem 2.5]{chong1974some} and also used in \cite{turkington1999statistical,boucher2000derivation}.  The inclusion $ \eqref{def:sverakset}\subseteq \eqref{def:casimirset}$ is obvious. Let us show a short proof of the remaining inclusion. We first observe that in \eqref{def:sverakset} it is enough to consider $$c\in [\min\{\omega_0\},\max\{\omega_0\}]:=I_0.$$ Indeed, if $c\geq \max\{\omega_0\}$ the inequality is trivial. Then, from the characterization \eqref{def:shnirset} we know that $\omega \in I_0$. Thus, for all $c<\min\{\omega_0\}$ we have 
\begin{equation}
\int_M (\omega-c)_+\, \dd x=\int_M(\omega-c)\, \dd x=\int_M (\omega_0-c)\, \dd x=\int_M (\omega_0-c)_+\, \dd x,
\end{equation} 
where the identity in the middle follows by the conservation of the mean. Then, to prove that $\eqref{def:casimirset}\subset \eqref{def:sverakset}$ let us first consider $f\in C^2$. Given $s\in I_0$, integrating by parts we have
\begin{align}\nonumber
\int_{I_0}(s-c)_+ f''(c)\dd c&= \int_{\min\{\omega_0\}}^s (s-c) f''(c) \dd c = (s-c)f'(c) \big|_{\min\{\omega_0\}}^s + \int_{\min\{\omega_0\}}^s f'(c)\dd c\\
&=f(s)-f(\min\{\omega_0\})-f'(\min\{\omega_0\})(s-\min\{\omega_0\}). \label{eq:deco}
\end{align}
Using conservation of the mean again, since $f''\geq 0$ by convexity, combining \eqref{eq:deco} with \eqref{def:sverakset} we get 
\begin{align}\nonumber
\int_M f(\omega)-f(\omega_0)\dd x&=\int_M \dd x\int_{I_0}((\omega-c)_+-(\omega_0-c)_+) f''(c)\dd c\\
&=\int_{I_0} f''(c)\dd c \int_{M}((\omega-c)_+-(\omega_0-c)_+)  \dd x\leq 0.
\end{align}
For any convex function $f$, the representation \eqref{eq:deco} is given by 
\begin{equation}
f(s)=\alpha_0+\alpha_1 s+\int(s-c)_+\dd \alpha(c),
\end{equation}
where $\alpha_0, \alpha_1$ are constants and $\alpha(c)$ is a positive measure. This again follows by approximation.  
\end{proof}

\section{Characterization of the minimizers}\label{seccharmin}
In this section, we aim at proving Theorem \ref{varprop} in different steps. We first prove the existence of a minimizer. Then we show \textit{(i)} and \textit{(ii)}.

\subsection{Existence} 
The existence of a minimizer is standard and follows from the  observation that a lower semicontinuous functional on a compact space attains its minimum. Indeed, let $\{\omega_n\}$ be a minimizing sequence in $\overline{\mathcal{O}_{\omega_0}}^* \cap \{ {\mathsf E}= {\mathsf E}_0 \}$. By definition, $\overline{\mathcal{O}_{\omega_0}}^*$ is weakly-* compact, so we can extract a subsequence such that $\omega_{n_j} \rightharpoonup \omega^*$ weakly in $L^2$. The corresponding stream functions $\psi_{n_j}$ then converge strongly in $L^2$ to $\psi^*$. This guarantees that $\mathsf{E}_0 = \lim_{j \to \infty} \mathsf{E}(\omega_{n_j}) =\mathsf{E}(\omega^*) $. Thus, the limit state $\omega^*$ belongs to our admissible set. Finally, the functional $\mathsf{I}_f$ is weakly lower semicontinuous because $f$ is convex, meaning that $\mathsf{I}_f(\omega^*)$ achieves the minimum as desired. 

\subsection{Minimizers are minimal in the sense of Shnirelman}
\label{subsec:minimin}
To argue that the minimizers are minimal in the sense of Shnirelman according Definition \ref{def:orderShnirelman}, we must use Lemma \ref{prop:eqstric} which we restate (due to a slight change in notation) and now prove: 

\begin{lemma}[Lemma \ref{prop:eqstric}]
	Given $\omega \in X$ and $K_1\in \mathscr{K}$, let $\omega_1=K_1\omega$.  There exists $\widetilde{K}\in \mathscr{K}$ such that $\omega=\widetilde{K}\omega_1$ if and only if 
    there exists a strictly convex function    
    $f:\mathbb{R}\to \mathbb{R}$ such that $\mathsf{I}_f(\omega_1)=\mathsf{I}_f(\omega)$. 
\end{lemma}

\begin{proof}[Proof of  Lemma \ref{prop:eqstric}]
	We first show that $\mathsf{I}_f(K\omega)\leq \mathsf{I}_f(\omega)$ for any $K\in \mathscr{K}$. Since $K$ is bistochastic, we know that $\mu_x(\dd y)=K(x,y)\dd y$ is a probability measure. By Jensen's inequality we have 
	\begin{equation}
		\label{eq:bdJ}
		\mathsf{I}_f(K\omega)=\int_M f\left(\int_M\omega(y)\mu_x(\dd y)\right)\dd x\leq \iint_{M\times M} f(\omega(y))\mu_x(\dd y)\dd x=\mathsf{I}_f(\omega)
	\end{equation}
	where the last identity follows by $\int K(x,y)\dd x=1$. 
	Therefore, we know that $\mathsf{I}_f(\omega_1)\leq \mathsf{I}_f(\omega)$ and if $\omega=\widetilde{K}\omega_1$ also $\mathsf{I}_f(\omega)\leq \mathsf{I}_f(\omega_1)$ meaning that $\mathsf{I}_f(\omega)=\mathsf{I}_f(\omega_1)$.
	
	It thus remains to prove that if $\mathsf{I}_f(\omega_1)=\mathsf{I}_f(\omega)$ for a given strictly convex function $f$, then there exists $\widetilde{K}\in \mathscr{K}$ such that $\omega=\widetilde{K}\omega_1$. Since the bound \eqref{eq:bdJ} is obtained pointwise for the integrand, when equality holds we have that for a.a. $x \in M$ 
	\begin{equation}
		\label{eq:bdJ1}
		f\left(\int_M\omega(y)\mu_x(\dd y)\right)= \int_{M} f(\omega(y))\mu_x(\dd y).
	\end{equation}
	Since $f$ is strictly convex, the equality case in the Jensen's inequality holds if and only if $\omega(y)=c_x$ $\mu_x$-a.e. for $c_x$ constant in $y$. Given a function $g:M\to \RR$, define the set 
	\begin{equation}
		S^{g}_{a,b}=\{y\in M: \ a< g(y)< b\}.
	\end{equation}
	Since $\omega$ is $\mu_x$-a.e. constant, observe that 
	\begin{equation}
		\mu_x(S^{\omega}_{a,b})=\begin{cases}
			1 \qquad &a< c_x< b\\
			0 \qquad &\text{otherwise}
		\end{cases}.
	\end{equation}
	In addition, since $\omega_1=K_1\omega=\int \omega(y)\mu_x(\dd y )$ and $\omega(y)$ is $\mu_x$-a.e. constant, we infer
	\begin{align}
		\notag |S^{\omega_1}_{a,b}|&=|\{x\in M: \ a< \omega_1(x) < b\}|=|\{x\in M: \ a< \int \omega(y)\mu_x(\dd y) < b\}|\\
		&=|\{x\in M: \ a< c_x < b\}|=|\{x\in M: \ \mu_x(S^{\omega}_{a,b})=1\}|.
	\end{align}
	Since $K_1$ is bistochastic we also have
	\begin{align}
		\notag |S^{\omega}_{a,b}|&=\int_{ \{y\in M: \ a< \omega(y)< b\}} \dd y=\iint_{M\times \{y\in M: \ a< \omega(y)< b\}}K_1(x,y) \dd y\dd x=\iint_{M\times \{y\in M: \ a< \omega(y)< b\}} \mu_x(\dd y)\dd x\\
		\notag &=\int_{M}\mu_x(S^{\omega}_{a,b})\dd x=|\{x\in M: \ \mu_x(S^{\omega}_{a,b})=1\}|=|S^{\omega_1}_{a,b}|, 
	\end{align}
	meaning that $\omega$ and $\omega_1$ are equimeasurable. Indeed, choosing $a=-\infty$, $b=-t$ and $a=t, b=+\infty$, we get 
	\begin{equation}
		|\{x\in M: \ |\omega_1(x)|>t\}|=|\{x\in M: \ |\omega(x)|>t\}|.
	\end{equation}
	Through the layer-cake representation, this imply  
	\begin{equation}
		\label{bd:eqL2}
		\norm{\omega_1}_{L^p}=\norm{\omega}_{L^p}, \qquad \text{for any } 1\leq p <\infty.
	\end{equation}
   We now have to ``invert" $K_1$. From  \eqref{eq:convergence}, since $\omega_1=K_1\omega$ we know that there exists a sequence of permutations $p_n$ such that $\omega\circ p_n\rightharpoonup \omega_1$ in $L^2$. Combining the weak convergence with \eqref{bd:eqL2} and the fact that $p_n$ is area preserving, notice that 
	\begin{equation}
		\norm{\omega\circ p_n-\omega_1}_{L^2}^2=\norm{\omega\circ p_n}_{L^2}^2+\norm{\omega_1}_{L^2}^2-2\int_M \omega_1(\omega\circ p_n)\, \dd x\overset{n\to \infty}{\longrightarrow} \norm{\omega}_{L^2}^2+\norm{\omega_1}_{L^2}^2-2\norm{\omega_1}_{L^2}^2=0. 
	\end{equation}
	Namely $\omega \circ p_n\to \omega_1$ in $L^2$. Since $p_n$ is area preserving, we also get $\omega_1\circ p_n^{-1}\to \omega$ in $L^2$. We can then define $\widetilde{K}\in \mathscr{K}$ as the operator obtained in the weak limit of $i_{p_n^{-1}}$ (see Step 1 in \S 4), i.e. 
	\begin{equation}
		\lim_{n_j\to \infty}\int_M g(x)(\omega_1\circ p_{n_j}^{-1})(x)\dd x=\int_{M}g(x)\int_M\omega_1(y)\widetilde{K}(x,y)\dd y\dd x.
	\end{equation}
	Since $\omega_1\circ p_{n}^{-1}\to \omega$ in $L^2$, we get $\omega=\widetilde{K}\omega_1$, whence the lemma is proved.
\end{proof}

As a consequence of Lemma \ref{prop:eqstric} we have:
\begin{corollary}
	\label{cor:minmin}
	Any minimizer  $\omega^*$ of the variation problem \eqref{vp} is a minimal flow in the sense of Shnirelman.
\end{corollary}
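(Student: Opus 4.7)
The plan is to verify the minimality of $\omega^*$ directly from the definition by combining the minimizing property with Lemma \ref{prop:eqstric}, exploiting the fact that the rigidity half of that lemma actually only requires the equality of $\mathsf{I}_f$ for a single strictly convex $f$ (as is clear from inspecting its proof).

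First I would fix an arbitrary $\omega\in\overline{\mathcal{O}_{\omega_0}}^*\cap\{\mathsf{E}=\mathsf{E}_0\}$ with $\omega\preceq\omega^*$ and work with the Shnirelman characterization of the partial order, which by the discussion following Lemma \ref{prop:eqstric} is equivalent to Definition \ref{def:po}: this gives a bistochastic operator $K\in\mathscr{K}$ with $\omega=K\omega^*$. Applying the pointwise Jensen bound \eqref{eq:bdJ} from the first step of the proof of Lemma \ref{prop:eqstric} with the strictly convex $f$ appearing in \eqref{vp}, I obtain
\[
\mathsf{I}_f(\omega)\leq \mathsf{I}_f(\omega^*).
\]
On the other hand, since $\omega$ belongs to the set $\overline{\mathcal{O}_{\omega_0}}^*\cap\{\mathsf{E}=\mathsf{E}_0\}$ on which $\omega^*$ minimizes $\mathsf{I}_f$, the reverse inequality $\mathsf{I}_f(\omega^*)\leq \mathsf{I}_f(\omega)$ also holds, so that $\mathsf{I}_f(\omega)=\mathsf{I}_f(\omega^*)$.

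Now I would invoke the nontrivial implication of Lemma \ref{prop:eqstric}: with $\omega=K\omega^*$ at hand, equality of $\mathsf{I}_f$ for even the single strictly convex $f$ chosen above forces the Jensen bound \eqref{eq:bdJ1} to be saturated for a.e.\ $x\in M$, which in turn forces $\omega^*$ to be $\mu_x$-a.e.\ constant (with $\mu_x(dy)=K(x,y)\,dy$). As worked out in the proof of the lemma, this upgrades to equimeasurability of $\omega$ and $\omega^*$, and produces a bistochastic operator $\widetilde K\in\mathscr{K}$ with $\omega^*=\widetilde K\omega$, i.e.\ $\omega^*\preceq\omega$. Thus $\omega^*$ is minimal.

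There is really no genuine obstacle in the argument — it is essentially a direct bookkeeping application of Lemma \ref{prop:eqstric}. The one point worth emphasizing (and which I would make explicit in the write-up) is that the ``only if'' direction of Lemma \ref{prop:eqstric} is proved by exploiting the rigidity of Jensen's inequality for a \emph{single} strictly convex integrand, so the equality $\mathsf{I}_f(\omega)=\mathsf{I}_f(\omega^*)$ obtained from the minimization is exactly the hypothesis needed — one does not require equality of all Casimirs a priori, which is in fact a \emph{consequence} of the construction.
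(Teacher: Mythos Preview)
Your proposal is correct and follows essentially the same approach as the paper: take $\omega=K\omega^*$, use Jensen's inequality to get $\mathsf{I}_f(\omega)\leq\mathsf{I}_f(\omega^*)$, use the minimizing property to force equality, and conclude minimality. The paper's proof stops at the equality $\mathsf{I}_f(\omega)=\mathsf{I}_f(\omega^*)$ and declares minimality in the sense of Definition~\ref{def:convexminimal} directly, whereas you go one step further and explicitly invoke the rigidity direction of Lemma~\ref{prop:eqstric} to produce $\widetilde K$ with $\omega^*=\widetilde K\omega$; your observation that this step only needs equality for a \emph{single} strictly convex $f$ is a useful clarification, but the argument is otherwise identical.
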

\begin{proof}[Proof of Corollary \ref{cor:minmin}]
Let $\omega^*$ be a minimizer of $\mathsf{I}_f$ with $f$ strictly convex, see \eqref{vp}. Consider $\omega_1 \in\overline{\mathcal{O}_{\omega_0}}^*\cap    \{ {\mathsf E}= {\mathsf E}_0\}$ and let $K$ be such that $\omega_1=K\omega^*$. As shown in the proof of Lemma \ref{prop:eqstric}, we have $\mathsf{I}_f(\omega_1)\leq \mathsf{I}_f(\omega^*)$. However, being $\omega^*$ a minimizer we must have $\mathsf{I}_f(\omega_1)= \mathsf{I}_f(\omega^*)$. Applying Lemma \ref{prop:eqstric}, we then deduce that $\omega^*$ is minimal in the sense of Definition \ref{def:orderShnirelman}.
\end{proof}

\subsection{Minimal flows are stationary}
Having at hand an $f$-minimal flow, it remains to show that is indeed a stationary solution.
\begin{lemma}[Minimal flows are Euler steady states]\label{prop:minstead}
	For any $f$-minimal flow $\omega^* \in\overline{\mathcal{O}_{\omega_0}}^*\cap    \{ {\mathsf E}= {\mathsf E}_0\}$, there exists a bounded monotone function $F:\mathbb{R}\to \mathbb{R}$ such that $\omega^*=F(\psi^*)$ where $\Delta \psi^*=\omega^*$.
\end{lemma}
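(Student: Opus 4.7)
I would argue by contradiction: suppose $\omega^*$ is a minimal flow but is \emph{not} of the form $F(\psi^*)$ for some bounded monotone $F$. The strategy is to exhibit a bistochastic operator $K\in \mathscr{K}$ such that $K\omega^* \in \overline{\mathcal{O}_{\omega_0}}^*\cap\{\mathsf E=\mathsf E_0\}$ and $\mathsf I_f(K\omega^*)<\mathsf I_f(\omega^*)$ for some strictly convex $f$. Since $K\omega^*\in\overline{\mathcal{O}_{\omega^*}}^*\subset\overline{\mathcal{O}_{\omega_0}}^*$ preserves the energy, Lemma \ref{prop:eqstric} would then force $\omega^*$ and $K\omega^*$ to be non-equimeasurable (because a strictly convex Casimir has dropped), directly contradicting the minimality hypothesis of Definition \ref{def:convexminimal}.

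\emph{Stationarity.} If $\omega^*$ is not a stationary Euler flow, then $\nabla^\perp\psi^*\cdot\nabla\omega^*\not\equiv 0$, which (away from critical points of $\psi^*$) means that $\omega^*$ is not locally a function of $\psi^*$: there is a regular value $c_0$ and a portion $\Gamma\subset\{\psi^*=c_0\}$ along which $\omega^*$ is non-constant on a set of positive arclength. I then introduce the tubular neighborhood $T_\delta=\{|\psi^*-c_0|<\delta\}$ with intrinsic coordinates $(s,r)$, $s$ the arclength along level curves and $r$ a transverse parameter aligned with $\psi^*$. Define $K_\delta\in\mathscr{K}$ to act as the identity outside $T_\delta$ and, on each level curve $\{\psi^*=c_0+\eta\}\subset T_\delta$, to replace $\omega^*|_{\psi^*=c_0+\eta}$ by its arclength average. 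Bistochasticity is immediate. Because $\psi^*$ is constant on each level curve of the average, the first variation $-\int(K_\delta\omega^*-\omega^*)\psi^*\,\mathrm{d}x$ vanishes exactly; a short computation via the Biot-Savart law shows $\mathsf E(K_\delta\omega^*)-\mathsf E(\omega^*)=O(\delta^2)$. On the other hand, the strict convexity of $f$ and the standing assumption that $\omega^*$ varies on $\Gamma$ give by Jensen's inequality
\[
\mathsf I_f(\omega^*)-\mathsf I_f(K_\delta\omega^*)\geq c\,\delta
\]
for some $c>0$ and all sufficiently small $\delta$. To upgrade to \emph{exactly} energy-preserving, I embed $K_\delta$ in a two-parameter family $K_{\delta,\lambda}$ (e.g.\ by asymmetrically shifting the tube to $\{-(1-\lambda)\delta<\psi^*-c_0<\lambda\delta\}$, $\lambda\in[0,1]$). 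The map $\lambda\mapsto \mathsf E(K_{\delta,\lambda}\omega^*)$ is continuous and its range straddles $\mathsf E_0$ on an interval of size $\gtrsim \delta^2$; the intermediate value theorem produces $\lambda^*$ with $\mathsf E(K_{\delta,\lambda^*}\omega^*)=\mathsf E_0$. The correction costs only $O(\delta^2)$ in the Casimir, so the $O(\delta)$ decrease survives, yielding the sought contradiction.

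\emph{Monotonicity.} If $\omega^*=F(\psi^*)$ is stationary but $F$ is non-monotone, choose $a<b$ in the range of $\psi^*$ at which $F$ "backtracks" (so $F$ has a local extremum in between). I construct a bistochastic operator $K$ that averages mass between small neighborhoods of $\{\psi^*=a\}$ and $\{\psi^*=b\}$; because of the opposite monotonicities of $F$ on either side of its extremum, this averaging \emph{strictly} alters the distribution function of $\omega^*$ (mass is genuinely redistributed between the two values $F(a), F(b)$). Performing this exchange through a two-parameter family (one parameter controlling how much mass is transferred, the other compensating the induced change in $\int\omega\,\psi^*$) lets one arrange $\mathsf E(K\omega^*)=\mathsf E_0$ exactly by the same intermediate-value trick, while Jensen's inequality applied to a strictly convex $f$ ensures $\mathsf I_f$ strictly decreases. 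This contradicts minimality.

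\emph{Main obstacle.} The chief technical difficulty is the bookkeeping of the energy correction in the stationarity argument: one must show that the leading $O(\delta^2)$ energy change dominates neither the $O(\delta)$ Casimir loss nor overwhelms the two-parameter correction needed to pin $\mathsf E$ to $\mathsf E_0$. This requires uniform control on the curvature of level sets of $\psi^*$ and Lebesgue-point arguments for $\omega^*\in L^\infty$ near $\Gamma$. The monotonicity argument is more delicate because it demands exchanging mass across disjoint level lines of $\psi^*$ while preserving the global geometry dictated by $\omega^*=F(\psi^*)$; the specific construction must be adapted to the non-monotone profile of $F$.
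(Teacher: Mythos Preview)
Your strategy---contradict minimality by producing an energy-preserving $K\in\mathscr K$ with $\mathsf I_f(K\omega^*)<\mathsf I_f(\omega^*)$---is the right one, but the execution has a real gap in the stationarity step. You assert $\mathsf E(K_\delta\omega^*)-\mathsf E(\omega^*)=O(\delta^2)$; in fact, writing $h:=K_\delta\omega^*-\omega^*$, one has
\[
\mathsf E(\omega^*+h)-\mathsf E(\omega^*)=-\int_M h\,\psi^*\,\rmd x-\tfrac12\int_M h\,\Delta^{-1}h\,\rmd x.
\]
The linear term does vanish (you are averaging along level curves of $\psi^*$), but the quadratic term equals $\tfrac12\|\nabla\Delta^{-1}h\|_{L^2}^2\geq 0$ and is of order $\|h\|_{L^2}^2\sim\delta$, the \emph{same} scale as your claimed Casimir drop. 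Worse, this quadratic term has a definite sign: level-curve averaging can only raise the energy, so no asymmetric shift of the tube will make $\lambda\mapsto\mathsf E(K_{\delta,\lambda}\omega^*)$ straddle $\mathsf E_0$, and your intermediate-value correction cannot be carried out within this family. The monotonicity case inherits the same difficulty and is in any event only sketched.

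The paper (following Shnirelman) uses a single device that handles stationarity and monotonicity simultaneously and avoids level-set geometry entirely. For a permutation $\phi$ of two arbitrary small squares $Q_1,Q_2$, set $K_\eps^\phi\omega=(1-\eps)\omega+\eps\,\omega\circ\phi$; the first variation of energy is
\[
\frac{\rmd}{\rmd\eps}\mathsf E(K_\eps^\phi\omega^*)\Big|_{\eps=0}=\int_{Q_1}\big(\omega^*(x)-\omega^*(\phi(x))\big)\big(\psi^*(x)-\psi^*(\phi(x))\big)\,\rmd x.
\]
If this quantity could take both signs as the squares vary, pick $\phi_1,\phi_2$ realizing opposite signs and, by continuity in $\lambda\in[0,1]$, find a convex combination $\tilde K_\eps=\lambda K_\eps^{\phi_1}+(1-\lambda)K_\eps^{\phi_2}$ with $\mathsf E(\tilde K_\eps\omega^*)=\mathsf E_0$ \emph{exactly}. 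Since each $\phi_i$ is measure-preserving, $\int_M f(\omega^*\circ\phi_i)=\int_M f(\omega^*)$, and strict convexity of $f$ then forces $\mathsf I_f(\tilde K_\eps\omega^*)<\mathsf I_f(\omega^*)$, contradicting minimality. Hence the first variation has a fixed sign for \emph{all} square pairs, which upon shrinking the squares yields $(\omega^*(x)-\omega^*(y))(\psi^*(x)-\psi^*(y))\geq 0$ (or $\leq 0$) for a.e.\ $x,y\in M$. This single pointwise monotonicity relation immediately delivers $\omega^*=F(\psi^*)$ with $F$ monotone, without any appeal to tubular neighborhoods, regular values, curvature of level sets, or Lebesgue-point arguments---and it works directly for $\omega^*\in L^\infty$.
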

The lemma above was proved by Shnirelman in \cite{shnirelman1993lattice} with his alternative definition of minimal flows. We show that this can be directly proved using Definition \ref{def:convexminimal}.

\begin{proof}[Proof of Lemma \ref{prop:minstead}]
We adapt the variational argument used by Shnirelman in \cite[Theorem 2]{shnirelman1993lattice} (and also by Segre and Kida in \cite[Sec A.2]{segre1998late}) to our situation.  Let $\phi$ be a permutation of two arbitrary squares $Q_1,Q_2$ in $M$. By Lemma \ref{lemma:perm}, we know that we can associate a bistochastic operator to $\phi$. Then, by convexity of $\mathscr{K}$, notice that the operator $K_\eps$ defined as
\begin{equation}
K_{\eps}^\phi\omega=(1-\eps)\omega+\eps (\omega\circ\phi)
\end{equation} 
is bistochastic. See Figure \ref{fig:bisto} for a visualization of such operator.
\begin{figure}[h!]
	\begin{tikzpicture}
		\draw[very thick] (0,0) rectangle (2,2); 
		\draw[very thick] (5,0) rectangle (7,2);
				\foreach \x in{0,.5,...,2}
		{
			\draw[dashed]  (\x,0)--(\x,2);
			\draw[dashed]  (0,\x)--(2,\x);
			\draw[dashed]  (\x+5,0)--(\x+5,2);
			\draw[dashed]  (5,\x)--(7,\x);
		}
		
		\filldraw[red] (1,1) rectangle (1.5,1.5);
		\filldraw[blue] (.5,.5) rectangle (1,1);
		
		\filldraw[red!65!blue] (1+5,1) rectangle (1.5+5,1.5);
		\filldraw[red!35!blue] (.5+5,.5) rectangle (1+5,1);
		\draw[thick,->] (2.5,1)--(4.5,1);
		\draw  (3.5,1) node[anchor=south] {$K_\eps^\phi$};
		\draw [white] (1.25,1.25) node {$Q_2$};
		\draw [white] (.75,.75) node {$Q_1$};
	\end{tikzpicture}
\caption{The operator $K_\eps^\phi$ is a proper mixing if we are exchanging squares where $\omega$ has different values.}
\label{fig:bisto}
\end{figure}
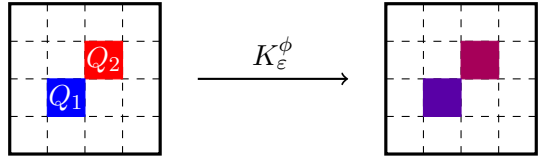

 By the choice of $\phi$, computing the first variation of the energy we have 
\begin{align}\nonumber
\frac{\dd}{\dd \eps}\mathsf{E}(K_\eps^\phi\omega^*)|_{\eps=0}&=\int_{Q_1\cup Q_2}(\omega^*(x)-\omega^*(\phi(x)))\psi^*(x)\dd x\\
&=\int_{Q_1}(\omega^*(x)-\omega^*(\phi(x)))(\psi^*(x)-\psi^*(\phi(x)))\dd x. \label{eq:first0}
\end{align}
We claim that the last integral in \eqref{eq:first0} cannot change sign. Otherwise, there are $K_\eps^{\phi_1}$ and $K_\eps^{\phi_2}$, exchanging squares $Q_1^\ell, Q_2^\ell$ with $\ell=1,2$, such that 
\begin{equation}
	\label{hyp:energy}
\mathsf{E}(K_\eps^{\phi_1}\omega^*)>\mathsf{E}(\omega^*), \qquad \mathsf{E}(K_\eps^{\phi_2}\omega^*)<\mathsf{E}(\omega^*).
\end{equation}
If the inequalities above hold, there exists $0<\lambda<1$ such that energy is preserved:
\begin{align}
&\tilde{K}_\eps\omega:=(\lambda K_\eps^{\phi_1}\omega+(1-\lambda)K_\eps^{\phi_2}\omega)=(1-\eps)\omega+\eps(\lambda \omega\circ \phi_1+(1-\lambda) \omega\circ \phi_2),\\
&\mathsf{E}(\tilde{K}_\eps\omega^*)=\mathsf{E}(\omega^*)=\mathsf{E}_0,
\end{align}
 meaning that $\tilde{K}_\eps\omega^*\in  \overline{\cO_{\omega_0}}^*\cap \{\mathsf{E}=\mathsf{E_0}\}$. Notice that the condition \eqref{hyp:energy} implies that $\omega\circ\phi_i$ is not equal to $\omega$ a.e. (namely, we are not exchanging squares where the vorticity is a constant). Then, since $f$ is strictly convex and $\omega\circ \phi_i$ is not equal to $\omega$ a.e., notice that 
\begin{align} \nonumber
	\mathsf{I}_f(\tilde{K}_\eps \omega^*)-\mathsf{I}_f(\omega^*)&=\int _M \left(f\left((1-\eps)\omega^*+\eps(\lambda \omega^*\circ \phi_1+(1-\lambda) \omega^*\circ \phi_2)\right)-f(\omega^*)\right)\, \dd x\\
	&<\eps\int_M \left(\lambda f(\omega^*\circ \phi_1)+(1-\lambda)f(\omega^*\circ \phi_2)-f(\omega^*)\right)\, \dd x=0,
\end{align}
where the last identity follows since $\phi_i$ are area preserving maps.
Therefore $\mathsf{I}_f(\tilde{K}_\eps\omega^*)<\mathsf{I}_f(\omega^*)$. Since  $\tilde{K}_\eps\omega^*\in \overline{\cO_{\omega_0}}^*\cap \{\mathsf{E}=\mathsf{E_0}\}$,  this contradicts $\omega^*$ being minimal according to Definition \ref{def:convexminimal}. Hence, this implies
\begin{equation}
	\label{eq:first00}
	\frac{\dd}{\dd \eps}\mathsf{E}(K_\eps^\phi\omega^*)|_{\eps=0}=\int_{Q_1}(\omega^*(x)-\omega^*(\phi(x)))(\psi^*(x)-\psi^*(\phi(x)))\dd x\geq 0 \ (\text{or } \leq 0).
\end{equation}
Since the choice of $Q_1$ is arbitrary, a.e. in $M$ we get
\begin{equation}
	\label{bd:monotonicity}
(\omega^*(x)-\omega^*(y))(\psi^*(x)-\psi^*(y))\geq 0,\quad \text{or} \quad (\omega^*(x)-\omega^*(y))(\psi^*(x)-\psi^*(y))\leq  0.
\end{equation}
One can now directly apply \cite[Lemma 1]{shnirelman1993lattice}, but we provide here a small generalization  with a more detailed proof.
\begin{lemma}
\label{lem:Shnirelmankey}
Let $\omega\in L^p(M)$ with $1<p\leq \infty$, and let $\psi$ be the solution to $\Delta\psi=\omega$ with $\psi=0$ on $\partial M$ (assuming $\int_M\omega=0$ if $\partial M=\emptyset$). If $(\omega(x)-\omega(y))(\psi(x)-\psi(y))\geq 0$ (or $\leq0$) for a.e. $x,y\in M$, then there exists a function $F:\mathbb{R}\to \mathbb{R}$ which is monotone nondecreasing (nonincreasing) such that $\omega=F(\psi)$ a.e. in $M$, and $F$ is bounded if $p=\infty$. Moreover, let $\omega^\star$ be the decreasing rearrangement of $\omega$  as in \eqref{def:HLPrear}. Then $F(t) = \omega^\star(d_\psi(t))$ a.e., with $d_\psi$ being the distribution function of $\psi$ in \eqref{def:df}.
\end{lemma}
Since a minimizer of $\mathsf{I}_f$ is $\omega^*\in L^\infty(M)$,  which is $f$-minimal, we know that \eqref{bd:monotonicity} holds true. We can thus apply the 
lemma above and conclude the proof $(i)$ in Thm \ref{varprop}.
\end{proof}
It remains to prove Lemma \ref{lem:Shnirelmankey}.

\begin{proof}
We consider only the case with the $\geq$ inequality, the other follows analogously.
First of all, by standard elliptic regularity theory and Sobolev embeddings, we know that $\psi \in C^{0,\beta}(\overline{M})$ for $\beta=2-2/p$ if $1<p<2$, and for any $\beta\in (0,1)$ if $p\geq2$.
Consider now the function $G=\omega+\psi$. We claim that if $G(x)=G(y)$ then $\psi(x)=\psi(y)$ and $\omega(x)=\omega(y)$. Indeed, if $\psi(x)>\psi(y)$, then the hypothesis $(\omega(x)-\omega(y))(\psi(x)-\psi(y))\geq0$ implies that $\omega(x)\geq\omega(y)$. Hence $G(x)>G(y)$, which is a contradiction. Changing the roles of $x$ and $y$ concludes the desired claim. Thus, we can write $\psi=\Psi(G)$ and $\omega=\Omega(G)$ for some functions $\Psi$ and $\Omega$ that are monotone nondecreasing. Denoting with $\Psi^{-1}$ a generalized inverse (see \cite{embrechts2013note}) of $\Psi$, chosen, for instance, to be right-continuous, we define $F=\Omega\circ \Psi^{-1}$. Because $\Psi^{-1}$ is defined everywhere, $F: [\psi_{\min},\psi_{\max}] := I \to \mathbb{R}$ is also defined everywhere and is monotone nondecreasing. 

However, the relation $\omega(x) = F(\psi(x))$ is guaranteed to hold only outside the set of values $\{\psi_j\}_{j\in J} \subset I$ where $\Psi$ is constant, i.e., where the generalized inverse is not the standard inverse. Note that because $\Psi$ is monotone, it can be constant on at most countably many disjoint intervals, so $J$ is at most countable. If $\psi(x)$ does not attain these values on sets of positive measure, $\omega = F(\psi)$ holds a.e. in $M$ and we are done. Suppose instead there exists a $j\in J$ such that the level set $M_j=\{x\in M \, :\, \psi(x) = \psi_j\}$ has positive measure, $|M_{j}|>0$. Since $M_{j}=\psi^{-1}(\psi_{j})$ and $\psi\in W^{1,1}$, by \cite[Theorem 6.19]{lieb2001analysis} we know that $\nabla \psi=0$ a.e. on $M_{j}$. We also know that $\nabla \psi\in W^{1,p}$. Hence, by \cite[Theorem 6.17]{lieb2001analysis} and the fact that $\nabla \psi=0$ a.e. on $M_{j}$, we deduce that $\Delta\psi=\omega=0$ a.e. on $M_{j}$. Therefore, in order for the relation $\omega(x) = F(\psi(x))$ to hold a.e. on $M$, we are forced to define $F(\psi_j) = 0$ for all such $j$ where $|M_j| > 0$. We must now verify that this assignment is compatible with the monotonicity of $F$. 

Let $\psi_{j_0}$ be any such value. Taking a sequence $\{\psi_{n}\}\subset I\setminus\{\psi_j\}_{j\in J}$ such that $\psi_n\downarrow \psi_{j_0}$, and taking $x_n\in \{\psi=\psi_n\}$ and $y\in M_{j_0}$, the hypothesis $(\omega(x_n)-\omega(y))(\psi_n-\psi_{j_0})\geq 0$ (with $\omega(y)=0$) implies $F(\psi_n)=\omega(x_n)\geq0$ for all $n$. Arguing analogously with a sequence $\psi_m \uparrow \psi_{j_0}$, we conclude that
$$ \lim_{\psi\to \psi_{j_0}^-}F(\psi)\leq 0 \leq \lim_{\psi\to \psi_{j_0}^+}F(\psi). $$
This guarantees that setting $F(\psi_{j_0})=0$ is compatible with the monotonicity of $F$.
Suppose now that there exists another $\psi_{j_1}$ such that $|M_{j_1}|>0$. Since $I$ is compact, assume without loss of generality that $\psi_{j_1}$ is the largest value in $I$ where this happens, and $\psi_{j_0}$ is the smallest. Repeating the arguments above, we conclude again that $F(\psi_{j_1})=0$.
By the monotonicity of $F$ and our hypothesis, we must have $F|_{[\psi_{j_0},\psi_{j_1}]}\equiv 0$. Indeed, for any $\psi_l\in(\psi_{j_0},\psi_{j_1})$ we have $F(\psi_l)\geq F(\psi_{j_0})=0$, while testing our assumption with $\psi_{j_1}$ yields $F(\psi_l)(\psi_{l}-\psi_{j_1})\geq 0$. Since $(\psi_{l}-\psi_{j_1})<0$, this forces $F(\psi_l)\leq 0$. This implies $F(\psi_l)=0$ and therefore $F|_{[\psi_{j_0},\psi_{j_1}]}\equiv 0$. 
By the choice of $\psi_{j_0}$ and $\psi_{j_1}$ as the extreme bounds, we have $F(\psi_i) \leq 0$ for any $\psi_{\min}\leq \psi_i<\psi_{j_0}$, and $F(\psi_k) \geq 0$ for any $\psi_{j_1}<\psi_k\leq \psi_{\max}$. Because $F$ is identically zero on the convex hull of all flat points with positive measure, the assignment $F(\psi_j)=0$ consistently yields a monotone nondecreasing function $F$ such that $\omega = F(\psi)$ a.e.

Finally, having established the existence and properties of $F$, we explicitly identify it almost everywhere using decreasing rearrangements. Let $d_g(t) = |\{x \in M : g(x) > t\}|$ denote the distribution function for $g \in \{\psi, \omega\}$. We claim that
$$F(t) = \omega^\star(d_\psi(t)).$$
To verify this, we distinguish two cases. First, consider a value $t$ where $F$ is strictly increasing. Because $\omega=F(\psi)$, the corresponding super-level sets coincide up to sets of measure zero, that is $\{x \in M : \omega(x) > F(t)\} = \{x \in M : \psi(x) > t\}$. Taking the Lebesgue measure of both sides yields $d_\omega(F(t)) = d_\psi(t)$, and therefore $\omega^\star(d_\omega(F(t))) = \omega^\star(d_\psi(t))$. By definition,
$$\omega^\star(d_\omega(F(t))) = \sup\{\tau \in \mathbb{R} : d_\omega(\tau) > d_\omega(F(t))\}.$$
Since $d_\omega$ is non-increasing, the inequality $d_\omega(\tau) > d_\omega(F(t))$ fails for any $\tau \geq F(t)$, meaning the supremum is at most $F(t)$. Conversely, because $F$ is strictly increasing at $t$, for any $\tau < F(t)$ there exists a $\delta > 0$ such that $F(t-\delta) \geq \tau$. Since $\psi$ is continuous, the preimage $\psi^{-1}((t-\delta, t))$ is a non-empty open set in $M$, which therefore has positive Lebesgue measure. On this set, $\omega$ takes values in $[\tau, F(t))$, guaranteeing that $d_\omega(\tau) > d_\omega(F(t))$ for all $\tau < F(t)$. Thus, the supremum is exactly $F(t)$, yielding the explicit identity $F(t) = \omega^\star(d_\psi(t))$. 

Second, suppose $t_0$ is a value where $\psi$ is constant on a set of positive measure, meaning $M_{t_0} = \{x \in M : \psi(x) = t_0\}$ has $|M_{t_0}| > 0$. This corresponds to a jump discontinuity in $d_\psi$ at $t_0$. As established earlier in the proof, $\Delta\psi = \omega = 0$ a.e. on $M_{t_0}$, which consistently forced the assignment $F(t_0) = 0$. Because $\omega$ is exactly $0$ on this set of positive measure, its decreasing rearrangement $\omega^\star(s)$ is identically $0$ over the corresponding interval of sizes $[d_\psi(t_0), d_\psi(t_0^-)]$. Consequently, evaluating $\omega^\star$ at $d_\psi(t_0)$ yields $0$, which is in perfect agreement with the assignment $F(t_0)=0$. Thus, the explicit formula $F(t) = \omega^\star(d_\psi(t))$ holds for almost every $t \in [\psi_{\min}, \psi_{\max}]$, concluding the proof.
\end{proof}
	
\subsection{Unconstrained characterization}
We exploit the abstract optimality theorem given by Rakotoson and Serre in \cite[Theorem 2]{rakotoson1993probleme}, which reads as follows. 
\begin{theorem}
	\label{th:rakserre}
	Let $\mathsf{X},\mathsf{Y}$ be two normed real vector spaces whose dual spaces are respectively $\mathsf{X}^*,\mathsf{Y}^*$. Let $\mathsf{C}\subset \mathsf{Y}$ be a convex cone\footnote{$\mathsf{C}$ is a convex cone if for each $k \in \mathsf{C}$ and $\alpha \in \mathbb{R}_+$ then $\alpha k\in \mathsf{C}$ and $\mathsf{C}+\mathsf{C}\subseteq \mathsf{C}$.} with non-empty interior. Let $g_0$ be an optimal solution of the problem 
	\begin{equation}
		J(g_0)=\inf \{J(g)  :  g\in \mathsf{X}, \quad Sg \in -\mathsf{C}\},
	\end{equation}
	where $J:\mathsf{X}\to \RR$ and $S:\mathsf{X}\to \mathsf{Y}$. Suppose that: 
	\begin{itemize}
		\item[(H1)] For all $h\in X$, the first variations of $J,S$ along $h$ are well defined at $g_0$, i.e. 
		\begin{equation}
			\lim_{\eps \to 0^+} \frac{J(g_0+\eps h)-J(g_0)}{\eps}=J'(g_0;h), \qquad 	\lim_{\eps \to 0^+} \frac{S(g_0+\eps h)-S(g_0)}{\eps}=S'(g_0;h).
		\end{equation} 
		
		\item[(H2)] The map $h\mapsto J'(g_0;h)\in \RR$ is convex. The map $h\mapsto S'(g_0;h)\in \mathsf{Y}$ is convex in the following sense: for all $\lambda\in[0,1]$ and $h_1,h_2\in \mathsf{X}$ one has 
		\begin{equation}
			\label{eq:convexity}
			S'(g_0;\lambda h_1+(1-\lambda)h_2)-\lambda S'(g_0;h)-(1-\lambda)S'(g_0;h)\in -\mathsf{C}.
		\end{equation}
	\end{itemize}
		Then, there exists $c_0\geq 0$ and $\lambda^* \in \mathsf{C}^*=\{L\in Y^* : \text{ for all } f\in \mathsf{C}, \ \langle L,f \rangle \geq0  \},$ such that the following holds true: for all $h\in \mathsf{X}$ 
\begin{align}
	\label{bd:genKTT}c_0 J'(g_0;h)+\langle \lambda^*,S'(g_0;h)\rangle &\geq0,\\
	\label{eq:orthgenKKT}\langle \lambda^*,Sg_0\rangle &= 0.
\end{align}
with $(c_0,\lambda^*)\neq(0,0)$.
\end{theorem}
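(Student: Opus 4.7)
The plan is to prove Theorem \ref{th:rakserre} by the standard geometric separation argument underlying abstract Fritz John/Karush--Kuhn--Tucker results. I would build an open convex set $E \subset \RR \times \mathsf{Y}$ encoding first-order decrease of $J$ coupled with first-order feasibility improvement inside $\mathsf{C}$, use the optimality of $g_0$ to exclude the origin from $E$, and then invoke the geometric Hahn--Banach theorem. The two components of the resulting separating continuous linear functional will deliver the pair $(c_0,\lambda^*)$, and the sign/dual-cone information together with the complementary slackness condition \eqref{eq:orthgenKKT} fall out by testing against specific elements of $E$.

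The first step is to define
\begin{equation}\label{setE}
E := \{(J'(g_0;h)+\alpha,\ S'(g_0;h)+Sg_0+k) : h\in\mathsf{X},\ \alpha>0,\ k\in\mathrm{int}(\mathsf{C})\}.
\end{equation}
Convexity of $E$ uses (H1)--(H2): for two points realized by triples $(h_i,\alpha_i,k_i)$, $i=1,2$, and a weight $\lambda\in[0,1]$, picking $h=\lambda h_1+(1-\lambda)h_2$ gives a first coordinate bounded above by the convex combination of the original first coordinates (by convexity of $h\mapsto J'(g_0;h)$) and a second coordinate differing from the convex combination of the original second coordinates by an element of $\mathsf{C}$ (by \eqref{eq:convexity}); both defects can be absorbed by enlarging $\alpha$ and $k$, using $\mathsf{C}+\mathrm{int}(\mathsf{C})\subset\mathrm{int}(\mathsf{C})$. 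Openness of $E$ is immediate: for fixed $h$, the map $(\alpha,k)\mapsto(J'(g_0;h)+\alpha, S'(g_0;h)+Sg_0+k)$ is a translation of the open set $(0,\infty)\times\mathrm{int}(\mathsf{C})$.

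The heart of the argument is the claim $(0,0)\notin E$. Otherwise, there would exist $h_0\in\mathsf{X}$, $\alpha_0>0$, $k_0\in\mathrm{int}(\mathsf{C})$ with $J'(g_0;h_0)=-\alpha_0$ and $S'(g_0;h_0)=-Sg_0-k_0$. Setting $g_\eps:=g_0+\eps h_0$, (H1) yields $J(g_\eps)=J(g_0)-\eps\alpha_0+o(\eps)<J(g_0)$ for all sufficiently small $\eps>0$, and $Sg_\eps=(1-\eps)Sg_0-\eps k_0+o(\eps)$. Since $k_0\in\mathrm{int}(\mathsf{C})$, some ball $B(-k_0,\delta)\subset -\mathsf{C}$, which by the cone property scales to $B(-\eps k_0,\eps\delta)\subset -\mathsf{C}$; adding $(1-\eps)Sg_0\in -\mathsf{C}$ and using $-\mathsf{C}+(-\mathsf{C})\subset -\mathsf{C}$ produces $B((1-\eps)Sg_0-\eps k_0,\eps\delta)\subset -\mathsf{C}$. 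The $o(\eps)$ error fits inside this ball for $\eps$ small, so $Sg_\eps\in -\mathsf{C}$, contradicting the optimality of $g_0$.

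The final step applies the geometric Hahn--Banach theorem to separate $\{0\}$ from the open convex set $E$, producing a nonzero continuous linear functional $(c_0,\lambda^*)\in\RR\times\mathsf{Y}^*$ with $c_0(J'(g_0;h)+\alpha)+\langle\lambda^*,S'(g_0;h)+Sg_0+k\rangle\geq 0$ on all of $E$. Testing with $h=0$ and letting $\alpha\to+\infty$ (with $k$ fixed) forces $c_0\geq 0$; testing with $h=0$ and scaling $k=tk_0$ to infinity along any ray $k_0\in\mathrm{int}(\mathsf{C})$ yields $\langle\lambda^*,k_0\rangle\geq 0$ for every $k_0\in\mathrm{int}(\mathsf{C})$, whence $\lambda^*\in\mathsf{C}^*$ by density of $\mathrm{int}(\mathsf{C})$ in $\mathsf{C}$. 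Passing instead to the limits $\alpha\to 0^+$ and $k\to 0$ along $tk_0$ gives $c_0 J'(g_0;h)+\langle\lambda^*,S'(g_0;h)+Sg_0\rangle\geq 0$ for all $h\in\mathsf{X}$; setting $h=0$ yields $\langle\lambda^*,Sg_0\rangle\geq 0$, which combined with the automatic $\langle\lambda^*,Sg_0\rangle\leq 0$ (from $-Sg_0\in\mathsf{C}$ and $\lambda^*\in\mathsf{C}^*$) produces \eqref{eq:orthgenKKT}; the earlier inequality then reduces to \eqref{bd:genKTT}. The main obstacle is the feasibility bootstrap in step three: the first variation $S'(g_0;h_0)$ is purely infinitesimal information, and it is precisely the hypothesis $\mathrm{int}(\mathsf{C})\neq\emptyset$ that provides the uniform cone room needed to absorb the $o(\eps)$ error and turn infinitesimal descent into a genuine feasible perturbation of $g_0$.
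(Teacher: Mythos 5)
The paper does not prove this statement at all: Theorem \ref{th:rakserre} is quoted verbatim from Rakotoson--Serre \cite[Theorem 2]{rakotoson1993probleme} and used as a black box, so there is no in-paper argument to compare against. Your blind proof is a correct, self-contained derivation along the canonical convex-separation (Fritz John/Dubovitskii--Milyutin) route, which is also the style of argument underlying the cited reference and the Karush--Kuhn--Tucker theory in \cite{zeidler2013nonlinear}. I checked the details: the set $E$ in \eqref{setE} is indeed open (a union over $h$ of translates of $(0,\infty)\times\mathrm{int}(\mathsf{C})$) and convex (the first-coordinate defect from convexity of $h\mapsto J'(g_0;h)$ is nonnegative and absorbed into $\alpha$, and the second-coordinate defect lies in $\mathsf{C}$ by \eqref{eq:convexity} and is absorbed using $\mathsf{C}+\mathrm{int}(\mathsf{C})\subseteq\mathrm{int}(\mathsf{C})$ together with convexity of $\mathrm{int}(\mathsf{C})$); the exclusion of the origin correctly converts the infinitesimal descent direction into a genuinely feasible competitor $g_\eps$, the interior ball $B(k_0,\delta)$ providing exactly the room needed to swallow the $o(\eps)$ remainder after scaling by the cone property; and the extraction of $c_0\geq 0$, $\lambda^*\in\mathsf{C}^*$, \eqref{bd:genKTT} and \eqref{eq:orthgenKKT} from the separating functional is sound. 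Two standard facts are invoked implicitly and are worth a one-line citation or remark if this were written out in full: that $\mathrm{int}(\mathsf{C})$ is dense in $\mathsf{C}$ for a convex set with nonempty interior (needed to pass from $\langle\lambda^*,k_0\rangle\geq 0$ on $\mathrm{int}(\mathsf{C})$ to $\lambda^*\in\mathsf{C}^*$), and that every continuous linear functional on $\RR\times\mathsf{Y}$ splits as $(e_1,e_2)\mapsto c_0e_1+\langle\lambda^*,e_2\rangle$. Neither is a gap.
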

\begin{remark} Theorem \ref{th:rakserre} is a natural generalization of the Karush-Kuhn-Tucker theory \cite{zeidler2013nonlinear} to the case with an infinite number of inequality constraints, see also Appendix \ref{finiteconst}.
\end{remark}	
We aim at applying Theorem \eqref{th:rakserre} in the following setting: let $\mathsf{C}$ be the convex cone 
\begin{align}\nonumber
	\mathsf{C}&=\{f\in L^{\infty}(\RR) \ :\ f(x)\geq 0, \ \text{ for a.e. } x\in \RR\}\times [0,\infty)\times \{0\}\\
	&=:\mathsf{C}_1\times [0,\infty)\times \{0\}. 	\label{def:K}
\end{align}
Observe that 
\begin{equation}
	\mathsf{C}^*=\mathsf{C}_1^*\times[0,+\infty)\times \mathbb{R},
\end{equation}
where $\mathsf{C}_1^*$ consists of non-negative, bounded and finitely additive measures that are absolutely continuous with respect to the Lebesgue measure. 
For any $\omega\in X$, with $X$ given in \eqref{def:X}, we define the functional $S:X\to L^{\infty}(\RR)\times \RR^2$ as 
\begin{align}
	\label{def:S}
	&S\omega:=(S_1\omega,S_2\omega,S_3\omega),\\
	&(S_1\omega)(c)=\int_M((\omega-c)_+-(\omega_0-c)_+)\, \dd x, \qquad \text{ for } c\in \RR, \\
	&S_2\omega=\int_M(\omega_0-\omega) \, \dd x,\\
	&S_3\omega=\mathsf{E}(\omega_0)-\mathsf{E}(\omega).
\end{align}
In account of the characterization  \eqref{def:sverakset}, imposing $S\omega\in -\mathsf{C}$ is equivalent to ask that $\omega \in\overline{\mathcal{O}_{\omega_0}}^*\cap    \{ {\mathsf E}= {\mathsf E}_0\}$. In fact, we just need to check that the mean is conserved. Take $c^*=\min\{\min\{\omega_0-1\}, \min\{\omega-1\}\}$. Then 
\begin{align}
	0\geq S_1(\omega)(c^*)= \int_M(\omega-c^*-(\omega_0-c^*))\, \dd x=\int_M\omega \, \dd x-\int_M \omega_0 \, \dd x.
\end{align}
Combining the inequality above with $S_2\omega\leq 0$ we recover the conservation of the mean. The first variation of $S_1$ is
\begin{equation}
	\label{eq:varS}
	\lim_{\eps\to 0}\frac{1}{\eps}\left(\int_M((\omega^*+\eps h-c)_+-(\omega^*-c)_+)\right)=\int_M (\chi_{\omega^*>c}h+\chi_{\omega^*=c}h_+),
\end{equation}
so we get that 
\begin{align}
	\label{hp:S}
	S'(\omega,h)(c)=(S_1'(\omega,h)(c),S_2'h,S_3'(\omega,h))=\left(\int_M(\chi_{\omega>c}h+\chi_{\omega=c}h_+), \ -\int_M h, \ \int_M \psi h\right),
\end{align}where $\Delta \psi=\omega$.
From the identity above, we deduce that \eqref{eq:convexity} holds true. Notice that the linearity with respect to $h$ of $S_3'$ is crucial.

Thanks to this construction, we can rewrite the variational problem \eqref{vp} as
\begin{equation}
	\label{def:eqpb}
	\min_{\omega \in \overline{\mathcal{O}_{\omega_0}}^*\cap    \{ {\mathsf E}= {\mathsf E}_0\}}\mathsf{I}_f(\omega)=\min\{\mathsf{I}_f(\omega): \omega \in X, \quad S\omega \in-\mathsf{C} \},
\end{equation}
where $S,\mathsf{C}$ are respectively defined in \eqref{def:S}, \eqref{def:K} and $X$ in \eqref{def:X}.

Since $\mathsf{I}_f,S$ satisfy the hypotheses \textit{(H1)-(H2)} in Theorem \ref{th:rakserre}
, we obtain the following as a consequence of Theorem \ref{th:rakserre}.

\begin{proposition}
	\label{prop:charRS}
	Let $f\in C^1(\RR)$ be a convex function. Let $\omega^*$ be an optimal solution to \eqref{def:eqpb}. There exists a non-negative measure $\lambda^* \in \mathsf{C}_1^*$, $\lambda_f,\lambda_{\mathsf{E}} \in \RR$, $\lambda_f^2+\lambda_{\mathsf{E}}^2 \neq 0$, $\lambda_m\geq 0$ such that for all $h\in X$
	\begin{align}
		\label{bd:char1}	&\int_M (\lambda_ff'(\omega^*)-\lambda_{\mathsf{E}}\psi^*-\lambda_{m})h\, \dd x +\int_\RR \left(\int_M(\chi_{\{\omega^*>c\}}h+\chi_{\{\omega^*=c\}}h_+)\, \dd x\right)\dd \lambda^*(c)\geq 0\\
		\label{bd:char2}&\int_\RR \left(\int_M\big((\omega^*-c)_+-(\omega_0-c)_+\big)\, \dd x\right)\dd \lambda^*(c)=0,
	\end{align}
	Moreover, defining the \textit{Plateau} set 
	\begin{equation}
		\label{def:plateu}
		P(\omega^*)=\{c\in[\operatorname{ess \ inf}\omega^*,\operatorname{ess \ sup} \omega^*]\, :\, |\omega^*=c|>0\},
	\end{equation}
	in the sense of distribution we have 
	\begin{align}
	\label{eq:interval1}	&\lambda_ff'(\omega^*)-\lambda_{\mathsf{E}}\psi^* \in [-\Gamma_2,-\Gamma_1],\\
		\label{def:Gamma}&\Gamma_1=\int_\RR\chi_{\{\omega^*(x)>c\}}\dd \lambda^*(c)-\lambda_m, \qquad \Gamma_2=\Gamma_1+\chi_{P(\omega^*)}(x)\int_{P(\omega^*)}\dd \lambda^*(c).
	\end{align}
	This implies that there exists a convex function $\Phi$ such that an optimal solution to \eqref{def:eqpb} is a minimizer in $X$ of the unconstrained functional 
	\begin{equation}
	\label{eq:unco}	J_\Phi(\omega)=\lambda_f\mathsf{I}_f(\omega)+\mathsf{I}_\Phi(\omega)+\lambda_{\mathsf{E}}(\mathsf{E}(\omega)-\mathsf{E}_0).
	\end{equation}
\end{proposition}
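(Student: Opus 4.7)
The strategy is to apply the Rakotoson-Serre optimality theorem (Theorem \ref{th:rakserre}) to the reformulation \eqref{def:eqpb}. Verifying (H1) reduces to the computation \eqref{eq:varS}, which already shows that the only non-smooth entry is $S_1'(\omega^*;h)(c)$ with its characteristic $h_+$ term; (H2) holds because $\mathsf{I}_f'$, $S_2'$, $S_3'$ are linear in $h$, while $h\mapsto S_1'(\omega^*;h)(c)$ satisfies \eqref{eq:convexity} via the pointwise inequality $(\lambda h_1+(1-\lambda)h_2)_+\le \lambda(h_1)_++(1-\lambda)(h_2)_+$. The dual cone decomposes as $\mathsf{C}^*=\mathsf{C}_1^*\times[0,\infty)\times\mathbb{R}$, so Theorem \ref{th:rakserre} produces multipliers $c_0\geq 0$, a non-negative finitely additive measure $\lambda^*\in\mathsf{C}_1^*$, $\lambda_m\geq 0$ and $\lambda_{\mathsf{E}}\in\mathbb{R}$ (of free sign, since $S_3$ is an equality constraint), not all zero. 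Setting $\lambda_f:=c_0$, inequality \eqref{bd:genKTT} becomes \eqref{bd:char1} while \eqref{eq:orthgenKKT} decomposes into \eqref{bd:char2} and the auxiliary slackness $\lambda_m\int_M(\omega_0-\omega^*)\,dx=0$. The non-degeneracy $\lambda_f^2+\lambda_{\mathsf{E}}^2\neq 0$ follows from a constraint qualification at $\omega^*$ together with the possibility of rescaling the multipliers: if both vanished, testing \eqref{bd:char1} against sign-changing $h$ supported off the plateau set would force $\lambda_m=\lambda^*((-\infty,\omega^*(x)))$ pointwise, collapsing $(\lambda^*,\lambda_m)$ to zero as well.

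To extract the pointwise characterization, apply Fubini to \eqref{bd:char1}, rewriting it as
\begin{equation}\nonumber
\int_M\bigl[\lambda_f f'(\omega^*)-\lambda_{\mathsf{E}}\psi^*+\Gamma_1\bigr]h\,dx+\int_M(\Gamma_2-\Gamma_1)h_+\,dx\geq 0,
\end{equation}
with $\Gamma_1,\Gamma_2$ as in \eqref{def:Gamma}, the second term non-negatively supported on the set $\{x:\omega^*(x)\in P(\omega^*)\}$. Off the plateau set $\Gamma_1=\Gamma_2$, and testing against $h$ of both signs yields the pointwise equality $\lambda_f f'(\omega^*)-\lambda_{\mathsf{E}}\psi^*=-\Gamma_1$. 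On the plateau set, the $h_+$ obstruction allows only $h\geq 0$ test functions to produce the lower bound $\lambda_f f'(\omega^*)-\lambda_{\mathsf{E}}\psi^*\geq -\Gamma_2$, while $h\leq 0$ gives $\lambda_f f'(\omega^*)-\lambda_{\mathsf{E}}\psi^*\leq -\Gamma_1$; together these yield \eqref{eq:interval1}.

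Finally, construct $\Phi$ as the primitive of a monotone function that realizes the inclusion. Since $\Gamma_1,\Gamma_2$ depend on $x$ only through $s=\omega^*(x)$, define $\phi(s):=\lambda^*((-\infty,s))-\lambda_m$, a non-decreasing bounded function, and set $\Phi(s):=\int_0^s\phi(t)\,dt$; then $\Phi$ is continuous and convex with subdifferential $\partial\Phi(s)=[\phi(s^-),\phi(s^+)]$, coinciding with $[\Gamma_1,\Gamma_2]$ at values $s=\omega^*(x)$. Condition \eqref{eq:interval1} now reads $\lambda_{\mathsf{E}}\psi^*(x)-\lambda_f f'(\omega^*(x))\in\partial\Phi(\omega^*(x))$ a.e.\ in $M$, which is exactly the Euler-Lagrange optimality condition for $J_\Phi$ of \eqref{eq:unco} restricted to $X$. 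The main obstacle I anticipate is the very last step: since $\lambda_{\mathsf{E}}$ may be of either sign, $J_\Phi$ is generally not convex on $X$, so upgrading the pointwise stationarity to an actual minimization requires a global comparison using the Lagrangian structure together with the fact that $\omega^*$ was already the constrained minimizer, rather than a direct convexity argument.
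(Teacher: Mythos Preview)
Your proposal follows the same overall architecture as the paper's proof: apply Theorem~\ref{th:rakserre} to the reformulation \eqref{def:eqpb}, unpack the resulting multipliers to obtain \eqref{bd:char1}--\eqref{bd:char2}, establish the non-degeneracy $\lambda_f^2+\lambda_{\mathsf E}^2\neq 0$, derive the interval inclusion \eqref{eq:interval1} by testing against sign-definite $h$, and finally build $\Phi$ as a primitive of the monotone function encoded by $\Gamma_1,\Gamma_2$. The verification of (H1)--(H2), the Fubini manipulation, and the construction of $\Phi$ are all handled as in the paper.

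There is, however, a genuine gap in your non-degeneracy step. From $\lambda_f=\lambda_{\mathsf E}=0$ and testing off the plateau you correctly deduce $\lambda_m=\lambda^*((-\infty,\omega^*(x)))$ for a.e.\ such $x$, but this by itself does \emph{not} force $(\lambda^*,\lambda_m)=(0,0)$: nothing prevents $\lambda^*$ from being supported outside the essential range of $\omega^*$ with total mass $\lambda_m$ below it. The paper closes this by invoking the structure of $\mathsf C_1^*$---its elements are absolutely continuous with respect to Lebesgue measure---together with a careful analysis at $c=\operatorname{ess\,sup}\omega^*$ (splitting into the cases $|\{\omega^*=\operatorname{ess\,sup}\omega^*\}|>0$ and $=0$) to show that the only way the pointwise identity can persist is if $\lambda^*$ concentrates at a single point, contradicting absolute continuity. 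You should incorporate this ingredient explicitly.

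Your closing remark about the last step is well taken: since $\lambda_{\mathsf E}$ may be negative, $J_\Phi$ need not be convex, and the subdifferential inclusion alone is only a first-order condition. The paper simply asserts the equivalence with minimization (citing Zeidler), but your instinct that one should instead pass through the Lagrangian---using that $\omega^*$ already minimizes the constrained problem and that the $S_1,S_2$ pieces of the Lagrangian are convex in $\omega$, with complementary slackness handling the active constraints---is the cleaner route.
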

\begin{remark}
	At this level of generality, we are not able to exclude the case $\lambda_f=0$. This degenerate scenario might include stationary states with constant vorticity.  See Appendix \ref{constvort} for an example of an $f$-minimal flow having this property in a region. We also stress that the conservation of the energy and the mean for  $\omega^*$ follows by the fact that $S\omega^*\in -\mathsf{C}$.
\end{remark}
\begin{remark}
	The fact that one can rewrite a constrained minimization problem as an unconstrained one as \eqref{eq:unco}, it is standard with a finite number of inequality constraints (the Lagrange multiplier rule or the more general Karush-Kuhn-Tucker theory). That the same happens also with infinite number of constraints was observed, for instance, by Rakotoson and Serre in \cite[Remark after Theorem 1]{rakotoson1993probleme}. Our Proposition \ref{prop:charRS} is different to the result obtained in \cite{rakotoson1993probleme} because we use the characterization \eqref{def:sverakset} instead of the one with the symmetric decreasing rearrangement, see Step 2 in \S \ref{sec:charOmset}.
\end{remark}
\begin{proof}
	We can apply Theorem \ref{th:rakserre} to the problem \eqref{def:eqpb} to obtain that
	\begin{equation}
		\label{bd:pfeq}
		\int_M (\lambda_ff'(\omega^*)-\lambda_{\mathsf{E}}\psi^*-\lambda_m)h\, \dd x +\int_\RR \left(\int_M(\chi_{\{\omega^*>c\}}h+\chi_{\{\omega^*=c\}}h_+)\, \dd x\right)\dd \lambda^*(c)\geq 0,
	\end{equation}
	with 
	\begin{equation}
		\label{eq:nondeg}
		(\lambda_f,\lambda_{\mathsf{E}},\lambda_m,\lambda^*)\neq (0,0,0,0).
	\end{equation}
 The equality \eqref{bd:char2} is the orthogonality condition \eqref{eq:orthgenKKT} for $S_1$.
	
	We then have to prove that $\lambda_f^2+\lambda_{\mathsf{E}}^2\neq 0$. Assume by contradiction that $\lambda_f=\lambda_{\mathsf{E}}=0$. We can assume $\lambda_m,\, \lambda^*\neq 0$, since if one of the two is zero, also the other must be zero by the arbitrariness of $h$, whence contradicting \eqref{eq:nondeg}. By a slight abuse of notation we can set $\lambda_m=1$. From \eqref{bd:pfeq}, we have 
	\begin{equation} \label{bd:absurd}
		\int_M h\, \dd x\leq	\int_\RR \left(\int_M(\chi_{\{\omega^*>c\}}h+\chi_{\{\omega^*=c\}}h_+)\, \dd x\right)\dd \lambda^*(c). 
	\end{equation}
If $|\omega^*=\operatorname{ess \ sup}{\omega^*}|>0$, take $h=\chi_{\{\omega^*=\operatorname{ess \ sup}{\omega^*}\}}$.  On the left hand side of the inequality above, we have something strictly positive. Therefore, $\lambda^*$ must be a Dirac mass at $c=\operatorname{ess \ sup}{\omega^*}$, but this contradicts $\lambda^*\in \mathsf{C}_1^*$ (the Dirac mass is not absolutely continuous with respect to the Lebesgue measure for instance).

Otherwise, recall the definition of the {Plateau} set given in \eqref{def:plateu}. Taking $h=g\chi_{M\setminus P(\omega^*)}$ or $h=-g\chi_{M\setminus P(\omega^*)}$, we see that the inequality \eqref{bd:absurd} become the identity  
	\begin{equation} \label{bd:absurd}
	\int_{\{M\setminus P(\omega^*)\}} g\, \dd x=	\int_\RR \left(\int_{\{M\setminus P(\omega^*)\}}\chi_{\{\omega^*>c\}}g\, \dd x\right)\dd \lambda^*(c). 
\end{equation}
By Fubini's theorem, we rewrite the above as 
\begin{equation}
	\label{eq:fubini}
		\int_{\{M\setminus P(\omega^*)\}} \left(1-\int_\RR \chi_{\{\omega^*>c\}}\dd \lambda^*(c)\right)g\, \dd x=0.
\end{equation}
Taking $g$ to be  the term inside brackets, we get 
\begin{equation}
	\label{eq:esssup}
	1=\int_{-\infty}^{\operatorname{ess \ sup}{\omega^*}}\chi_{\{\omega^*(x)>c\}}\dd \lambda^*(c), \qquad \text{for a.a. } x\in \{M\setminus P(\omega^*)\}.
\end{equation}
Since we are considering the case $|\omega^*=\operatorname{ess \ sup}\omega^*|=0$ and we are taking the essential supremum, we can find at least one point $\tilde{x}\in \{M\setminus P(\omega^*)\}$ such that $\omega^*(\tilde{x})=\operatorname{ess \ sup}\omega^*$ can be taken in \eqref{eq:esssup}. But \eqref{eq:esssup} would imply that $\lambda^*$ is a Dirac mass concentrated in $\operatorname{ess \ sup}\omega^*$, whence contradicting the continuity w.r.t. the Lebesgue measure of $\lambda^*$. Therefore, $\lambda_f^2+\lambda_{\mathsf{E}}^2=0$ is not possible.

To prove \eqref{eq:interval1}, considering $h\leq 0$ in \eqref{bd:char1} and using Fubini's theorem as in \eqref{eq:fubini}, by the definition of $\Gamma_1$ in \eqref{def:Gamma} we get 
\begin{equation}
	\int_M (\lambda_ff'(\omega^*)-\lambda_{\mathsf{E}}\psi^*+\Gamma_1)(-h)\, \dd x\leq 0.
\end{equation} 
This means $\lambda_ff'(\omega^*)-\lambda_{\mathsf{E}}\psi^*+\Gamma_1\leq 0$ in the sense of distribution since we are testing against the non-negative function $-h$. The lower bound with $-\Gamma_2$ follows analogously by testing against $h\geq 0$ in \eqref{bd:char1}.

Finally, to prove \eqref{eq:unco}, it is enough to observe that $\Gamma_1$ and $\Gamma_2$ are of the form $g_1\circ \omega^*$ and $g_2\circ \omega^*$ with 
\begin{equation}
	g_1(s)=\int_\RR\chi_{\{s>c\}}\dd \lambda^*(c)-\lambda_m, \qquad g_2(s)=g_1(s)+\chi_{P(s)}\int_{P(s)}\dd \lambda^*(c).
\end{equation}
The second term in $g_2$ is interpreted as $P(s)=1$ if $s=c$ for some $c\in[\operatorname{ess \ inf}\omega^*,\operatorname{ess \ sup} \omega^*]$. We can now argue as in \cite{rakotoson1993probleme}. Namely, since $g_1\leq g_2$ and are both decreasing, it means that there exists a convex function $\Phi$ whose sub-differential, denoted by $\partial \Phi$,\footnote{For instance, $(\partial |x|)(0)=[-1,1]$.} at $s$ is the interval $[g_1(s),g_2(s)]$.  Thus 
\begin{equation}
	\lambda_f f'(\omega^*)-\lambda_{\mathsf{E}}\psi^*\in \partial \Phi(\omega^*),
\end{equation}
which, by the definition of the subdifferential, is equivalent to be a minimizer of \eqref{eq:unco} \cite{zeidler2013nonlinear}. Note that since $\Phi$ is convex, it has at most finitely many discontinuities in its derivative and hence it can be chosen Lipschitz.
 \end{proof}

\section{Excluding shear flows at infinite times}
We now turn our attention to the proof of Theorem \ref{damp}. In the sequel, we denote $x=(x_1,x_2)$ with
$x_1\in \TT$ and $x_2\in [0,1]$. We exploit the periodicity in $x_1$ by taking the Fourier transform on the horizontal variable. For any $f\in L^2(M)$, let 
\begin{equation}
	\label{def:Fourier}
	f(x_1,x_2)=\sum_{k\in \ZZ} \hat{f}_k(x_2)e^{ i k x_1}, \qquad \hat{f}_k(x_2)=\frac{1}{2\pi}\int_0^{2\pi}e^{-i kx_1}f(x_1,x_2)\dd x_1. 
\end{equation}

Given a vorticity $\omega$, the associated streamfunction $\psi$ satisfy 
\begin{equation}
	\begin{cases}
		(\de_{x_2}^2-k^2) \widehat{\psi}_k=\widehat{\omega}_k,\\
		k\widehat{\psi}_k(0)=k\widehat{\psi}_k(1)=0.  
	\end{cases}
\end{equation}
When $k=0$, we set $\widehat{\psi}_0(0)=0$ and, in principle, we could choose $\widehat{\psi}_0(1)$ as we wish. We fix the value of this constant exploiting the conservation of the linear	 momentum $\mathsf{M}$. Namely, we set 
\begin{equation}
	\label{eq:consmom}
	\widehat{\psi}_0(1)=\frac{\mathsf{M}}{2\pi}.
\end{equation} 
With this choice,  the Green's function in the periodic channel with Dirichlet boundary conditions is
\begin{equation}
	\label{Green}
	G_k(x_2,z)=-\begin{cases}\displaystyle x_2(1-z)\mathbbm{1}_{k=0}+\frac{1}{k\sinh (k)}
		\sinh(kx_2)\sinh(k(1-z)), \qquad &\text{ for } 0\leq x_2\leq z \leq 1,\\
		\displaystyle (1-x_2)z\mathbbm{1}_{k=0}+\frac{1}{k\sinh (k)}\sinh(k(1-x_2))\sinh(kz), \qquad &\text{ for } 0\leq z\leq x_2 \leq 1,
	\end{cases}
\end{equation}
and $\widehat{\psi}_k$ is then given by 
\begin{equation}
	\label{eq:psiG}
	\widehat{\psi}_k(x_2)= \frac{\mathsf{M}}{2\pi} x_2 \mathbbm{1}_{k=0} + \int_0^1 G_k(x_2,z)\widehat{\omega}_k(z)\dd z.
\end{equation}

Recall that $\omega_b$ is the background vorticity with energy $\mathsf{E}_b$, momentum $\mathsf{M}_b$ and let $\delta>0$ be a given constant. Let $0<\eps<\delta $ be a small parameter. We define $\xi$ as a small spatial scales perturbation of $\omega_b$: 
\begin{align}\label{boxvortex}
	&\xi=\omega_{b}+\delta \eps^{-2}\mathbbm{1}_{\mathsf{B}_\eps}(x_1)\mathbbm{1}_{\mathsf{A}_\eps}(x_2),\qquad \mathsf{A}_\eps=[1/2-\eps,1/2+\eps], \, \mathsf{B}_\eps=[\pi-\eps,\pi+\eps].\\
    &\omega=\xi-\omega_{b}.
\end{align}
Notice that $\omega$ is an $L^\infty$ approximation of a point vortex (Dirac point mass vorticity).  The construction can easily be smoothed to make the approximations $ C^\infty$, since we measure closeness to the background only in an integral sense.
By the definition of $\omega$, one has 
\begin{equation}
	\label{bd:L1}
	\norm{\xi-\omega_b}_{L^1}=\norm{\omega}_{L^1}=\delta \eps^{-2}\int_{\mathsf{A}_\eps\times \mathsf{B_\eps}}\dd x_2\dd x_1=4\delta.
\end{equation}
Since the linear momentum  is a linear functional, we have $\mathsf{M}(\xi)=\mathsf{M}_b+\mathsf{M}(\omega)$. Thus as $y\in[0,1]$, a bound analogous to \eqref{bd:L1} readily give us that $|\mathsf{M}(\xi)-\mathsf{M}_b|\lesssim \delta$.

We now turn our attention to the energy. It is natural to expect that $\mathsf{E}(\omega)$ is of order $|\log(\eps)|$ and, for our perturbation, we can compute this explicitly. The Fourier transform of $\omega$ is
\begin{equation}
	\label{eq:omFour}\widehat{\omega}_k(x_2)=\delta\frac{\eps^{-1}}{\pi}\mathbbm{1}_{\mathsf{A}_\eps}(x_2)\mathbbm{1}_{k=0}+\delta \eps^{-2}\mathbbm{1}_{\mathsf{A}_\eps}(x_2) \frac{e^{-ik\pi}}{\pi}\frac{\sin(k\eps)}{k} 
\end{equation}
 By Plancherel's theorem, the energy is given by 
 \begin{align}\nonumber
	\mathsf{E}(\omega)&=-\frac12\int_M\omega\psi \dd x=-\delta\eps^{-1}\int_{1/2-\eps}^{1/2+\eps}\widehat{\psi}_0(x_2)\dd x_2-\sum_{k\neq 0}\delta\frac{\eps^{-2}}{k}e^{ik\pi}\sin(k\eps)\int_{1/2-\eps}^{1/2+\eps}{\widehat{\psi}}_k(x_2)\dd x_2\\
	&:=\delta\big(\mathcal{E}_0+\sum_{k\neq 0}\mathcal{E}_k\big) .
\end{align}
For the $k=0$ part, since $\de_{x_2x_2}\widehat{\psi}_0=\widehat{\omega}_0$, by Taylor's theorem we get 
\begin{equation}
\widehat{\psi}_0(x_2)=\widehat{\psi}_0(1/2)+\de_{x_2}\widehat{\psi}_0(1/2)(x_2-1/2)+\frac{\widehat{\omega}_0(\tilde{x}_2)}{2}(x_2-1/2)^2
\end{equation}
with $\tilde{x}_2$ between $x_2$ and $1/2$. Therefore,  
\begin{equation}
\mathcal{E}_0=-2\widehat{\psi}_0(1/2)-\delta \frac{\eps^{-2}}{2\pi}\int_{1/2-\eps}^{1/2+\eps}(x_2-1/2)^2\dd x_2=-2\widehat{\psi}_0(1/2)-\delta \frac{\eps}{3\pi}.
\end{equation}
Using \eqref{eq:psiG} and \eqref{eq:omFour}, by the continuity of the Green's function, we infer 
\begin{equation}
	\widehat{\psi}_0(1/2)= \frac{\mathsf{M}(\omega)}{4\pi} + \delta\frac{2}{\pi}G_0(1/2,1/2)+\delta O(\eps)= O(\delta),
\end{equation}
where we used that the momentum of the perturbation is $\mathsf{M}(\omega) = O(\delta)$ and $G_0(1/2,1/2) = -1/4$. Consequently, 
\begin{equation}
	\label{bd:E0}
	\mathcal{E}_0= O(\delta).
\end{equation}
To compute $\mathcal{E}_k$, we need to know the stream function for $x_2\in [1/2-\eps,1/2+\eps]$. By \eqref{eq:psiG}, when $x_2\in [1/2-\eps,1/2+\eps]$ and $k\neq 0$ we have 
\begin{align}
	\notag e^{ik\pi}\widehat{\psi}_k(x_2)=\,&-\delta\eps^{-2}\frac{ \sin(k\eps)}{\pi k}\frac{1}{k\sinh(k)}\bigg(\sinh(k(1-x_2))\int_{1/2-\eps}^{x_2}\sinh(kz)\dd z\\ \nonumber
	&\qquad +\sinh(kx_2)\int_{x_2}^{1/2+\eps}\sinh(k(1-z))\dd z\bigg) \\ \nonumber
	=\,&-\delta\eps^{-2}\frac{ \sin(k\eps)}{\pi k}\frac{\sinh(k(1-x_2))}{k^2\sinh(k)}(\cosh(kx_2)-\cosh(k(1/2-\eps)))\\ 
	&\qquad  -\delta\eps^{-2}\frac{ \sin(k\eps)}{\pi k}\frac{\sinh(kx_2)}{k^2\sinh(k)}(\cosh(k(1-x_2))-\cosh(k(1/2-\eps))).
\end{align}
Since $\sinh(a+b)=\sinh(a)\cosh(b)+\sinh(b)\cosh(a)$ we rewrite the identity above as 
\begin{align}
	\label{eq:psik}
 e^{ik\pi}	\widehat{\psi}_k(x_2)=\delta\eps^{-2}\frac{\sin(k\eps)}{\pi k^3}\left(\frac{\cosh(k(1/2-\eps))}{\sinh(k)}(\sinh(k(1-x_2))+\sinh(kx_2))-1\right).
\end{align}
Computing the integral and using $\cosh(a+b)-\cosh(a-b)=2\sinh(a)\sinh(b)$, we get
\begin{align}\nonumber
	e^{ik\pi}\int_{1/2-\eps}^{1/2+\eps}{\widehat{\psi}}_k(x_2)\dd x_2&=2\delta \eps^{-1}\frac{\sin(k\eps)}{\pi k^3}\left(\frac{\cosh(k/2-k\eps)}{(k\eps)\sinh(k)}(\cosh(k/2+k\eps)-\cosh(k/2-k\eps))-1\right)\\
\label{eq:intpsi}	&=2\delta \eps^{-1}\frac{\sin(k\eps)}{\pi k^3}\left(2\frac{\cosh(k/2-k\eps)}{\sinh(k)}\sinh(k/2)\frac{\sinh(k\eps)}{k\eps}-1\right).
\end{align}
From standard properties of the hyperbolic functions, notice that
\begin{equation}
\label{eq:coolcosh}	2\frac{\cosh(k(1/2-\eps))}{\sinh(k)}\sinh(k/2)=\frac{\cosh(k(1/2-\eps))}{\cosh(k/2)}=e^{-|k|\eps}+e^{-|k|(1+\eps)}\left(\frac{e^{2|k|\eps}-1}{1+e^{-|k|}}\right).
\end{equation}
If $|k\eps|<1/10$, combining \eqref{eq:intpsi} with \eqref{eq:coolcosh}, using Taylor's formula we infer 
\begin{align}
\notag \mathcal{E}_k&=	-\frac{2\delta/\eps^{3}}{\pi}\frac{(\sin(k\eps))^2}{ k^4}\left((e^{-|k|\eps}-1)+e^{-|k|(1+\eps)}\frac{\sinh(k\eps)}{k\eps}\left(\frac{e^{2|k|\eps}-1}{1+e^{-|k|}}\right)+e^{-|k|\eps}\left(\frac{\sinh(k\eps)}{k\eps}-1\right)\right)\\
\label{bd:Ekleq}&\approx \frac{2\delta}{\pi |k|}+O(\delta) e^{-|k\eps|/4}.
\end{align}
From the identity above for $\mathcal{E}_k$ we deduce the following (rough) bound at large frequencies
\begin{align}
	\label{bd:Ekgeq}
	\left|\mathcal{E}_k\right|&\lesssim 	\frac{\eps}{(\eps k)^4} \qquad \text{for} \qquad |k\eps|\geq\tfrac{1}{10}.
\end{align}
Putting together \eqref{bd:E0}, \eqref{bd:Ekleq} and \eqref{bd:Ekgeq} we obtain
\begin{align}\nonumber
	\mathsf{E}(\omega)&\approx \delta^2\bigg(1+O(\eps)+\sum_{|k|< (10\eps)^{-1}}\left(\frac{1}{|k|}+ O(1)e^{-|k\eps|/4}\right)+\sum_{|k|\geq (10 \eps)^{-1}}\frac{O(\eps)}{(\eps k)^4}\bigg)\\
	&\approx  \delta^2\big(|\log(\eps)|+1+O(\eps)\big).
\end{align}
Taking $\eps$ sufficiently small, we finally get
\begin{equation}
\mathsf{E}(\omega)\approx \delta^2|\log(\eps)|.
\end{equation}
Moreover, the main contribution to the energy of $\xi$ is given by $\omega$. Indeed, 
\begin{align}
	\mathsf{E}(\xi)=\mathsf{E}(\omega)+\mathsf{E}_b+2\int_M \psi_b\omega\dd x
\end{align}
Since $\norm{\psi_b}_{L^\infty}\lesssim \norm{\omega_b}_{L^\infty}$, we have 
\begin{equation}
	\left|\int_M \psi_b\omega\dd x\right|\lesssim \norm{\omega_b}_{L^\infty}\norm{\omega}_{L^1}\lesssim \delta \norm{\omega_b}_{L^\infty}.
\end{equation}
Taking $\eps$ sufficiently small so that $\mathsf{E}(\omega)\approx	\delta^2|\log(\eps)|\gg \mathsf{E}_b +\delta \norm{\omega_b}_{L^\infty}$
we have 
\begin{equation}
	\label{energypert}
	\mathsf{E}(\xi)\approx \delta^2|\log(\eps)| .	
\end{equation}

We are now ready to prove that $\xi$ cannot be rearranged into a shear flow in the set $\overline{\mathcal{O}_{\xi}}^*\cap \{ \mathsf{E}={\mathsf E}(\xi)\}\cap \{ \mathsf{M}={\mathsf M}(\xi)\}$. Assume by contradiction that $\widetilde{\omega}_{\mathsf{s}}\in\overline{\mathcal{O}_{\xi}}^*\cap \{ \mathsf{E}={\mathsf E}(\xi)\}\cap \{ \mathsf{M}={\mathsf M}(\xi)\}$ is a shear flow, namely $\widetilde{\omega}_{\mathsf{s}}\equiv \widetilde{\omega}_{\mathsf{s}}(x_2)$.  By the characterization given in \eqref{def:shnirset}, we know that
\begin{equation}
\norm{\tilde\omega_{\mathsf{s}}}_{L^\infty}\lesssim \eps^{-2}	
\end{equation}
Moreover, to obtain a shear flow from $\xi$ there are two possibilities: 
\begin{enumerate}
\item rearrange the value $\eps^{-2}$ in horizontal strips whose total size is $\eps^2$,
\item  do a proper mixing of $\omega$ and $\omega_b$ and rearrange everything to get a function depending only on $x_2$.
\end{enumerate}
 This last procedure creates a shear flow whose $L^\infty$ norm is smaller with respect to a rearrangement but the resulting shear flow could be big in a larger set. In particular, the worst case scenario is to create a shear flow of the form  
\begin{equation}
	\label{def:worstshear}
	 \widetilde{\omega}_{\mathsf{s}}(x_2)= \begin{cases}
	O(\mu^{-p}) & \text{ on } \widetilde{A}_{\mu^2}, \, |\widetilde{A}_{\mu^2}|\leq  \mu^2,  \\
	 O(1) & \text{ on } [0,1]\setminus A_{\mu^2}, 
	\end{cases}
\end{equation}
where $0<\mu\ll 1$ and $p>0$ are numbers that need to be controlled with the constraints imposed on $\omega_{\mathsf{s}}$ to belong to $\overline{\cO_{\xi}}^*$. For instance, having $\mu=1/|\log(\eps)|$ and $p$ too large will give rise to an energy even larger than the one of $\xi$. However,  thanks to the characterization \eqref{def:sverakset}, if $\widetilde{\omega}_{\mathsf{s}}(x_2)\in \overline{\cO_{\xi}}^*$ one has 
\begin{equation}
	\int_M|\widetilde{\omega}_{\mathsf{s}}(x_2)|\dd x\leq \int_{M}|\xi|\dd x\leq \norm{\omega_b}_{L^1}+4\delta,
\end{equation}
which implies
\begin{equation}
	\mu^{-p}\lesssim \mu^{-2}(\norm{\omega_b}_{L^1}+4\delta +1)=O(\mu^{-2})
\end{equation}
Therefore, the worst case scenario in $\overline{\cO_{\xi}}^*$ is \eqref{def:worstshear} with $p=2$. Split now the vorticity into the large and $O(1)$ part as 
\begin{equation}
	\widetilde{\omega}_\mathsf{s}(x_2)=\widetilde{\omega}_\mathsf{s}(x_2)(\mathbbm{1}_{A_{\mu^2}}+\mathbbm{1}_{[0,1]\setminus A_{\mu^2}})(x_2):=\widetilde{\omega}_{\mathsf{s}}^{L}(x_2)+\widetilde{\omega}_{\mathsf{s}}^{1}(x_2).
\end{equation}
We rewrite the energy as
\begin{align} \nonumber
	\mathsf{E}(\widetilde{\omega}_{\mathsf{s}})&=-\frac12\int_0^1\widetilde{\omega}_\mathsf{s}(x_2)\left(\int_0^1G_0(y,z)\widetilde{\omega}_\mathsf{s}(z)\dd z\right)\dd x_2\\ \nonumber
	&=-\frac12\int_0^1(\widetilde{\omega}_{\mathsf{s}}^{L}(x_2)+\widetilde{\omega}_{\mathsf{s}}^{1}(x_2))\left(\int_0^1G_0(y,z)(\widetilde{\omega}_{\mathsf{s}}^{L}(z)+\widetilde{\omega}_{\mathsf{s}}^{1}(z))\dd z\right)\dd x_2\\
	&:= I_{L,L}+I_{L,1}+I_{1,L}+I_{1,1},
\end{align}
where $I_{L,L}$ is the integral containing two large vorticities and so on. Using the boundedness of $G_0$, we control each term as follows 
\begin{align}
	|I_{L,L}|&\lesssim \mu^{-4}\int_{A_{\mu^2}\times A_{\mu^2}}\dd x_2 \dd z=O(1),\\
	|I_{L,1}|+|I_{1,L}|&\lesssim \mu^{-2}\int_{A_{\mu^2}\times ([0,1]\setminus A_{\mu^2})}\dd x_2 \dd z=O(1),\\
	|I_{1,1}|&\lesssim \int_{([0,1]\setminus A_{\mu^2})\times ([0,1]\setminus A_{\mu^2})}\dd x_2 \dd z=O(1).
\end{align}
Therefore, the energy of shear flow $\widetilde{\omega}_{\mathsf{s}}$ obtained through a rearrangement of $\xi$ would satisfy
\begin{equation}
		\mathsf{E}(\widetilde{\omega}_{\mathsf{s}})\lesssim 1.
\end{equation}
In view of \eqref{energypert}, there is a large energy gap $\mathsf{E}(\xi)\gg \mathsf{E}(\widetilde{\omega}_{\mathsf{s}})$.  Thus for any $\widetilde{\omega}_{\mathsf{s}}\in\overline{\mathcal{O}_{\xi}}^* \cap \{ \mathsf{M}={\mathsf M}(\xi)\}$, we have $\widetilde{\omega}_{\mathsf{s}}\notin\overline{\mathcal{O}_{\xi}}^*\cap \{ \mathsf{E}={\mathsf E}(\xi)\}\cap \{ \mathsf{M}={\mathsf M}(\xi)\}$. This completes the proof. \qed

\begin{remark}[Conservation of the mean]
	It is crucial to use the conservation of momentum to fix the constant \eqref{eq:consmom} and use the Green's function with Dirichlet boundary conditions. If we do not impose the constraint on the momentum, one may always shift a shear flow by an arbitrary large constant to obtain an energy of size $|\log(\eps)|$.
\end{remark}

\begin{remark}[Elliptical vortex]\label{ellipticalvortex}
In the above proof, an alternative to using the box vortex \eqref{boxvortex} would be to use an elliptical patch
\be
\omega_E:= \mathsf{m}\chi_{E}, \qquad E = \{ x^2/a^2 + y^2/b^2=1\}, \qquad \mathsf{m}\in \mathbb{R}.
\ee
Regarded as a solution on $\mathbb{R}^2$, it has renormalized energy $\mathsf{E}[\omega_E]:=-\frac{1}{2} \int \psi_E \omega_E$ where $\psi_E$ is the corresponding stream function that can be written down explicitly:
\be\label{energyellipse}
\mathsf{E}[\omega_E] = -\frac{\Gamma^2}{4\pi}\left[\log \left(\frac{a+b}{2}\right)-\frac{1}{4}\right],
\ee
where $\Gamma = \pi ab\mathsf{m}$ is the circulation of the vortex.  From this formula, the effect of the logarithmic singularity is evident.  On bounded domains, \eqref{energyellipse} is the leading order behavior of the energy for $a,b\ll 1$. 
\end{remark}

\begin{remark}[On the smallness of $\eps$]
	If we consider data of the form comprised of two patches
	\be
	\omega_0=a_1\chi_{A_1}+a_2\chi_{A_2}.
	\ee
  Prop. \ref{prop:patch} in this specific setting says 
 	\begin{align}
 		\label{eq:COpatch22}
 		\overline{\mathcal{O}_{\omega_0}}^*= \Bigg\{a_1\leq \omega\leq a_2 \ : \ \int_M \omega = a_1|A_1|+ a_2|A_2|, \quad  \int_M (\omega-a_1)_+ \dd x \leq  (a_2-a_1)|A_2| \Bigg\}.
 	\end{align}
Given this explicit characterization, one can hope to bound better how small $\varepsilon$ must be taken in the argument to rule out shear flows.  For example,
take $a_1=-1$, $a_2= \mathsf{m}$ and $|A_2|= \varepsilon^2$, $|A_1|= |\mathbb{T}\times [0,1]|- \varepsilon^2$. 
 One can compute the maximal energy of all shears in the set \eqref{eq:COpatch22}, depending on parameters $ \mathsf{m}$ and $\varepsilon$.  If $A_2$ is chosen e.g. to be an ellipse as in Remark \ref{ellipticalvortex}, then there is a formula for its energy  which is explicit up to corrections coming from the bounded domain.  These facts combined can give precise estimates as to how small $\varepsilon$ must be taken in order to exclude shears in the set $ \overline{\mathcal{O}_{\omega_0}}^*\cap \{ \mathsf{E}={\mathsf E}_0\}\cap \{ \mathsf{M}={\mathsf M}_0\}$.
\end{remark}

\appendix

\section{Maximal mixing theory prediction for vortex patches}\label{finiteconst}

Let us now give some examples of the predictions of Shnirelman's maximal mixing theory (Theorem \ref{varprop} herein). Consider the vorticity to be a finite number of vortex patches, namely $\omega_0\in X$ given by
\begin{equation}
\label{eq:discrint}
	\omega_0=\sum_{i=1}^Na_i\chi_{A_i},
\end{equation}
 with $a_i\in \RR$, $a_i\neq a_j$ and $\cup_i A_i=M$. Without loss of generality, assume that $a_1\leq a_2\leq \dots \leq a_N$. In this case, the characterization  \eqref{def:sverakset} of the weak-$*$ closure of the orbit of $\omega_0$  can be refined as follows:
 \begin{proposition} \label{prop:patch} Given any $\omega_0\in X$ of the form \eqref{eq:discrint}, we have
 	\begin{align}
 		\overline{\mathcal{O}_{\omega_0}}^*= \Bigg\{\omega\in X \ : \ \int_M \omega =\int_M \omega_0, \ \ & \ \ \  \int_M(\omega-a_i)_+\leq \int_M(\omega_0-a_i)_+ \quad \text{for all } \quad i=1,\dots,N \Bigg\}. 
 		\label{bd:finite}
 	\end{align}
 \end{proposition}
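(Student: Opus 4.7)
The plan is to prove the non-trivial inclusion ``$\supseteq$'' in \eqref{bd:finite}; the reverse inclusion is immediate upon restricting \eqref{def:sverakset} to the special values $c = a_1, \dots, a_N$. So let $\omega \in X$ satisfy $\int \omega = \int \omega_0$ together with the finite family of inequalities $\int_M (\omega - a_i)_+ \, \rmd x \leq \int_M (\omega_0 - a_i)_+ \, \rmd x$ for $i=1,\dots,N$. The goal is to upgrade these to the full continuous family $\int_M (\omega - c)_+ \leq \int_M (\omega_0 - c)_+$ for every $c \in \mathbb{R}$ required by \eqref{def:sverakset}.

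The first step is to deduce that $a_1 \leq \omega \leq a_N$ almost everywhere from the endpoint constraints. Since $\omega_0 \leq a_N$ everywhere we have $(\omega_0 - a_N)_+ \equiv 0$, so the constraint at $c = a_N$ gives $\int (\omega - a_N)_+ \leq 0$, whence $\omega \leq a_N$ a.e. For the lower bound, note $(\omega_0 - a_1)_+ = \omega_0 - a_1$ since $\omega_0 \geq a_1$, so
\[
\int (\omega_0 - a_1)_+ \, \rmd x = \int \omega_0 \, \rmd x - a_1 |M| = \int \omega \, \rmd x - a_1|M| \leq \int (\omega - a_1)_+ \, \rmd x
\]
by mean conservation and the pointwise bound $(\omega - a_1)_+ \geq \omega - a_1$. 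The hypothesis at $c = a_1$ forces equality throughout, so $(\omega - a_1)_+ = \omega - a_1$ a.e., i.e., $\omega \geq a_1$ a.e.

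The heart of the argument is a concavity analysis in the variable $c$. Define $g(c) := \int_M (\omega_0 - c)_+ \, \rmd x - \int_M (\omega - c)_+ \, \rmd x$; the aim is to show $g(c) \geq 0$ for every $c \in \mathbb{R}$. Since $\omega_0 = \sum_i a_i \chi_{A_i}$, the map
\[
c \mapsto \int_M (\omega_0 - c)_+ \, \rmd x = \sum_{i=1}^N |A_i|\,(a_i - c)_+
\]
is piecewise linear in $c$, with breakpoints precisely at $c = a_1, \dots, a_N$. In contrast, $c \mapsto \int_M (\omega - c)_+ \, \rmd x$ is convex as the integral of the family of pointwise-convex functions $c \mapsto \max(\omega(x) - c, 0)$. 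Therefore on each subinterval $[a_i, a_{i+1}]$, $g$ is the difference of a linear and a convex function, hence concave; being $\geq 0$ at both endpoints by hypothesis, it is $\geq 0$ throughout the subinterval by Jensen.

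Finally, outside the range $[a_1, a_N]$, $g$ vanishes identically: for $c \leq a_1$, the a.e. bounds $\omega \geq a_1 \geq c$ and $\omega_0 \geq a_1 \geq c$ remove both plus signs and yield $g(c) = \int \omega_0 - \int \omega = 0$, while for $c \geq a_N$ both integrands vanish a.e. Combining these cases gives $g \geq 0$ on all of $\mathbb{R}$, so by \eqref{def:sverakset} we conclude $\omega \in \overline{\mathcal{O}_{\omega_0}}^*$. There is no serious obstacle; the substantive observation is simply that the level-set function $c \mapsto \int (\omega_0 - c)_+$ is piecewise linear with corners only at the values $a_i$, which turns $g$ into a piecewise concave function whose sign on the whole real line is determined by its values on the finite set $\{a_1,\dots,a_N\}$.
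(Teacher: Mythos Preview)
Your proof is correct and follows essentially the same approach as the paper. Both arguments hinge on the same two facts: that $c \mapsto \int_M (\omega - c)_+$ is convex, and that $c \mapsto \int_M (\omega_0 - c)_+$ is piecewise linear with corners exactly at the $a_i$. The paper expresses this by writing $c = \lambda a_\ell + (1-\lambda)a_{\ell+1}$, applying convexity of $(\cdot)_+$ to $\omega$, and then verifying by direct computation that the resulting convex combination for $\omega_0$ collapses to $\int(\omega_0 - c)_+$; your concavity framing of $g(c)$ packages the same computation more cleanly but is not substantively different.
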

 \begin{remark}[case of two equal patches]
 	\label{rem:twopatch}
 	If $\omega_0=a_1\chi_{A_1}+a_2\chi_{A_2}$ is comprised of two patches of equal magnitude but opposite strength (e.g. $a_1=-a_2=1$) occupying equal areas ($|A_1|=|A_2|=\frac{1}{2}|M|$),  Prop. \ref{prop:patch} gives
 	\begin{align}
 		\label{eq:COpatch}
 		\overline{\mathcal{O}_{\omega_0}}^*= \Bigg\{\omega\in X \ : \ \int_M \omega =\int_M \omega_0 \Bigg\}.
 	\end{align}
 Indeed, considering $a_1<a_2$, from $\int_M(\omega-a_2)_+\leq \int_M(\omega_0-a_2)_+=0$ we readily deduce that $\omega\leq a_2$. To get the lower bound, let $\eps>0$. Then 
 \begin{equation}
 	\label{bd:twopatc}
 	\int_M(\omega-(a_1+\eps))_+\leq \int_M(\omega_0-(a_1+\eps))_+=\int_M(\omega_0-(a_1+\eps))=\int_M(\omega-(a_1+\eps)),
 \end{equation}
where in the last identity we used the conservation of the mean. Recalling the definition of the positive and negative part, i.e. $(f)_+=(f+|f|)/2$ and $(f)_-=(|f|-f)/2$, from \eqref{bd:twopatc} we get $\int_M(\omega-(a_1+\eps))_-\leq 0$. This imply $\omega \geq a_1+\eps$. Sending $\eps\to 0$, we obtain $a_1\leq\omega\leq a_2$. Since we do not have any other constraints, we deduce that any $\omega \in X$ (assuming $a_1=-a_2$) can be taken.
 \end{remark}
 The main point of \eqref{bd:finite} is that we have a \textit{finite} number of inequality constraints. 
 This is extremely useful since we can characterize the minimizer of \eqref{vp} through the Karush-Kuhn-Tucker (KKT) theory \footnote{The extension of the Lagrange multiplier rule when we have inequality constraints. } \cite{zeidler2013nonlinear}. 
 We thus obtain the following.
 \begin{proposition}
 	\label{prop:KKT}
 	Let $f\in C^1$ be a convex function. Consider $\omega_0$ as in \eqref{eq:discrint} with energy ${\mathsf E}_0$. Then, there exists $\mu_0,\mu_1,\{\lambda_i\}_{i=1}^{N}\in \RR$ such that $\omega^*\in X$ solving
 	\begin{align}
 		\label{eq:KKT}
\int_M \left(f'(\omega^*)+\mu_0\psi^*+\mu_1+\sum_{i=i}^N\lambda_i(\chi_{\omega^*>a_i}+\chi_{\omega^*=a_i}\chi_{w>0})\right)w\geq 0, \qquad \text{for all } w\in X
 	\end{align} 
 	is a minimizer of \eqref{vp}. Moreover, for $i=1,\dots, N$
    \begin{align}
    	\label{eq:KKTcond}
    	&\lambda_i\geq 0, \qquad \lambda_i \int_M \left((\omega^*-a_i)_+-(\omega_0-a_i)_+\right)=0,\\
    	&\int_M (\omega^*-a_i)_+\leq \int_M (\omega_0-a_i)_+.
    \end{align}
 \end{proposition}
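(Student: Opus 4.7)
The plan is to apply the Karush--Kuhn--Tucker multiplier rule (equivalently, Theorem \ref{th:rakserre} specialized to a finite-dimensional cone). Setting
\begin{equation*}
g_i(\omega) := \int_M (\omega - a_i)_+\, dx - \int_M (\omega_0 - a_i)_+\, dx, \quad S_0(\omega) := \int_M (\omega - \omega_0)\, dx, \quad S_E(\omega) := \mathsf{E}(\omega) - \mathsf{E}_0,
\end{equation*}
the refined characterization in Proposition \ref{prop:patch} lets one rewrite \eqref{vp} as
\begin{equation*}
\min\{\mathsf{I}_f(\omega) : \omega \in X, \; g_i(\omega) \leq 0,\; i=1,\dots,N, \; S_0(\omega) = 0, \; S_E(\omega) = 0\}.
\end{equation*}
Existence of a minimizer $\omega^*$ is provided by the argument in \S 5.1. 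The key advantage over the general setting of Proposition \ref{prop:charRS} is that only finitely many inequality constraints are present, so the convex cone $\mathsf{C}$ in Theorem \ref{th:rakserre} may be taken as $\mathbb{R}_+^N \times [0,\infty) \times \{0\}$, whose dual consists of nonnegative scalars $(\lambda_1,\dots,\lambda_N)$ together with the equality-constraint multipliers.

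The next step is to compute the one-sided first variations at $\omega^*$ in directions $w$ such that $\omega^* + \varepsilon w$ remains in $L^\infty$ for small $\varepsilon>0$. Since $f \in C^1$ and $\mathsf{E}$ is a smooth quadratic, one has
\begin{equation*}
\mathsf{I}_f'(\omega^*; w) = \int_M f'(\omega^*) w\, dx, \qquad S_0'(\omega^*; w) = \int_M w\, dx, \qquad S_E'(\omega^*; w) = -\int_M \psi^* w\, dx,
\end{equation*}
and, by the identical computation performed in \eqref{eq:varS},
\begin{equation*}
g_i'(\omega^*; w) = \int_M \bigl(\chi_{\{\omega^* > a_i\}}\, w + \chi_{\{\omega^* = a_i\}}\, w_+\bigr)\, dx.
\end{equation*}
Hypotheses (H1)--(H2) of Theorem \ref{th:rakserre} are then immediate: the variations of $\mathsf{I}_f, S_0, S_E$ are linear in $w$, while $g_i'$ is convex in $w$ thanks to the positive-part term. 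Applying the theorem (and normalizing so that the multiplier in front of $\mathsf{I}_f$ equals $1$) produces multipliers $\mu_0,\mu_1 \in \mathbb{R}$ and $\lambda_i \geq 0$, together with complementary slackness $\lambda_i g_i(\omega^*)=0$ and feasibility $g_i(\omega^*)\leq 0$; substituting the formulas above into the abstract first-order inequality \eqref{bd:genKTT} recovers \eqref{eq:KKT} and \eqref{eq:KKTcond} after the sign identification $\mu_0 \leftrightarrow -\mu_0$ needed so that $\mu_0 \psi^*$ appears with the displayed sign.

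The only delicate point is the one-sided nature of $g_i'$: the factor $\chi_{\{\omega^* = a_i\}}\chi_{\{w>0\}}$ in \eqref{eq:KKT} originates from the corner of $s \mapsto (s-a_i)_+$ at $s=a_i$, which forces us to perturb only by $\omega^* + \varepsilon w$ with $\varepsilon >0$ and replace $w$ by $w_+$ on $\{\omega^* = a_i\}$. This feature is precisely why the KKT inequality holds directionally rather than as an Euler--Lagrange equation. A secondary, softer issue is the constraint qualification guaranteeing that the multiplier in front of $f'(\omega^*)$ is nonzero (so that the normalization used above is legitimate); this is handled exactly as in the proof of $\lambda_f^2+\lambda_{\mathsf{E}}^2 \neq 0$ in Proposition \ref{prop:charRS}, but here for a finite-dimensional cone, where the contradiction argument reduces to showing that the multiplier supported at the patch levels $\{a_1,\dots,a_N\}$ cannot balance the constant and energy multipliers for a generic admissible test $w$. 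Aside from these two points, the proof is a direct specialization of Proposition \ref{prop:charRS}, with the abstract measure $\lambda^*$ collapsing to the weighted sum of atoms $\sum_i \lambda_i \delta_{a_i}$ supported at the patch values.
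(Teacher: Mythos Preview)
Your approach is essentially the same as the paper's: both apply KKT theory to the finitely-constrained reformulation afforded by Proposition~\ref{prop:patch}, compute the same one-sided first variations (including the corner term $\chi_{\{\omega^*=a_i\}}w_+$), and read off \eqref{eq:KKT}--\eqref{eq:KKTcond} from the abstract multiplier rule. The paper invokes Zeidler's KKT theorem directly rather than specializing Theorem~\ref{th:rakserre}, but this is a cosmetic difference.

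The one substantive divergence is in the constraint qualification. The paper verifies the Slater condition---existence of a strictly feasible $\widetilde{\omega}$---to conclude that the multiplier in front of $\mathsf{I}_f$ is nonzero, then normalizes it to $1$. Your route, invoking the argument for $\lambda_f^2+\lambda_{\mathsf{E}}^2\neq 0$ from Proposition~\ref{prop:charRS}, does not transfer cleanly: that argument hinges on $\lambda^*\in\mathsf{C}_1^*$ being absolutely continuous with respect to Lebesgue measure, which is precisely what rules out Dirac masses at $\operatorname{ess\,sup}\omega^*$. In the finite-patch setting the multiplier \emph{is} a sum of atoms $\sum_i\lambda_i\delta_{a_i}$, so the contradiction mechanism you cite is unavailable. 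Moreover, even the conclusion $\lambda_f^2+\lambda_{\mathsf{E}}^2\neq 0$ of Proposition~\ref{prop:charRS} is weaker than what you need (it allows $\lambda_f=0$), so it would not justify normalizing the coefficient of $f'(\omega^*)$ to $1$. The Slater route is both simpler and actually yields the stronger statement required.
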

 
 \begin{remark}\label{2patchrem}
In the case of equal patches with opposite strength, i.e. $\omega_0=a_1\chi_{A_1}+a_2\chi_{A_2}$  with $a_1=-a_2 =1$ and $|A_1|=|A_2|=|M|/2$, we can even obtain a stronger characterization. Indeed, from \eqref{eq:COpatch} we know that it is enough to just impose the conservation of the mean and that $\omega\in X$. In this special case, a standard trick \cite{zeidler2013nonlinear,vsverak2012selected} in variational problems is to first modify the convex function $f$ as 
\begin{equation}
	F(\omega)=\begin{cases}
		f(\omega), \qquad& \text{if } |\omega|\leq 1,\\
		+\infty, \qquad & \text{otherwise}.
	\end{cases}
\end{equation}
This modified convex function will automatically impose that the minimizer $\omega^*$ belongs to $X$. Hence, thanks to \eqref{eq:COpatch} and the standard Lagrange multiplier rule, any minimizer for the problem \eqref{vp} satisfies
\begin{equation}
	\label{Lag2patch}
	F'(\omega^*)+\mu_0\psi^*+\mu_1=0,
\end{equation}
where $\mu_0,\mu_1$ are chosen to guarantee the conservation of the energy and the mean respectively.  We note that different convex functions $f$ correspond to different minimal flows, thereby showing they need not be unique in the set $ \overline{\cO_{\omega_0}}^*\cap \{\mathsf{E}=\mathsf{E_0}\}$ for certain $\omega_0$.
 \end{remark}

\begin{remark}
	We point out that for the general case of multiple patches, the non-differentiability of the positive part function is the main impediment to obtain an identity as in \eqref{Lag2patch} rather than an inequality as in \eqref{eq:KKT}. We can however define an approximate problem that can be helpful for practical purposes. When $\int f=\int g$ we know that $\int f_+\leq \int g_+$ is equivalent to $\int |f|\leq \int |g|$, so that we can approximate the set \eqref{bd:finite} as 
	\begin{equation}\nonumber
		\cO_{\omega_0}^\delta:=\Bigg\{\omega\in X  : 1 \int_M \omega =\int_M \omega_0, \   \int_M\sqrt{(\omega-a_i)^2+\delta}\leq \int_M\sqrt{(\omega_0-a_i)^2+\delta} \quad \text{for all } \ i=1,\dots,N \Bigg\}.
	\end{equation}
In this set, the inequality \eqref{eq:KKT} is replaced by
\begin{equation}
f'(\omega^*)+\mu_0\psi^*+\mu_1+\sum_{i=i}^N\lambda_i \frac{1}{\sqrt{(\omega^*-a_i)^2+\delta}}=0,
\end{equation}
with the same conditions \eqref{eq:KKTcond}.
\end{remark}

\begin{proof}[Proof of Proposition. \ref{prop:patch}]
 	We just have to show that 
	\begin{equation}\nonumber
		 		\overline{\mathcal{O}_{\omega_0}}^*\supseteq S_{\omega_0}:=\Bigg\{\omega\in X \ : \ \int_M \omega =\int_M \omega_0, \ \  \ \ \  \int_M(\omega-a_i)_+\leq \int_M(\omega_0-a_i)_+ \quad \text{for all } \quad i=1,\dots,N \Bigg\}, 
	\end{equation}
since the reverse inclusion directly follows from \eqref{def:sverakset}. We are going to exploit the characterization \eqref{def:sverakset} of $\overline{\mathcal{O}_{\omega_0}}^*$. We first observe that for $\omega \in \overline{\cO_{\omega_0}}^*$ it is enough to consider $c\in [a_1,a_N]$ (one can argue as in Remark \ref{rem:twopatch}). Hence, it is enough to prove that for any $\omega \in S_{\omega_0}$ one has 
\begin{equation}
	\label{bd:c+}
	\int_{M}(\omega-c)_+\leq \int_{M}(\omega_0-c)_+ \qquad \text{for all } c\in[a_1,a_N].
\end{equation}
When $c=a_i$, $i=1,\dots,N$ there is nothing to prove. Assume that $a_\ell<c<a_{\ell+1}$ for some $\ell\in \{1,\dots,N\}$. Let $\lambda>0$ be such that $c=\lambda a_\ell+(1-\lambda)a_{\ell+1}$. By the convexity of the positive part function, we deduce 
\begin{equation}
	\label{bd:conv+}
	\int_M(\omega-c)_+= \int_M(\lambda(\omega -a_\ell)+(1-\lambda)(\omega-a_{\ell+1}))_+\leq \lambda\int_M(\omega_0-a_\ell)_++(1-\lambda)\int_M(\omega_0-a_{\ell+1})_+,
\end{equation}
where in the last inequality we  used $\omega \in S_{\omega_0}$.
Since $\omega_0$ is of the form \eqref{eq:discrint} and $a_\ell<c<a_{\ell+1}$, we have 
\begin{align}
	&\int_{M}(\omega_0-c)_+= \sum_{i\geq \ell+1}(a_i-c)|A_i|,\qquad
	\int_{M}(\omega_0-a_\ell)_+= \sum_{i\geq \ell+1}(a_i-a_{\ell})|A_i|,\\
	&\int_{M}(\omega_0-a_{\ell+1})_+= \sum_{i\geq \ell+1}(a_i-a_{\ell+1})|A_i|.
\end{align}
Therefore 
\begin{align}\nonumber
	\lambda\int_M(\omega_0-a_\ell)_++(1-\lambda)\int_M(\omega_0-a_{\ell+1})_+&=\sum_{i\geq \ell+1}(a_i-(\lambda a_{\ell}+(1-\lambda)a_{\ell+1}))|A_i|\\
	&= 	\sum_{i\geq \ell+1}(a_i-c)|A_i|=\int_M(\omega_0-c)_+.
\end{align}
Combining the identities above with \eqref{bd:conv+} we prove \eqref{bd:c+}, so that $S_{\omega_0}=\overline{\cO_{\omega_0}}^*$.
\end{proof}
We now turn our attention to the proof of Proposition \ref{prop:KKT}.
\begin{proof}[Proof of Proposition \ref{prop:KKT}]
	Thanks to the characterization \eqref{bd:finite}, solving \eqref{vp} corresponds to solve a minimum problem with a \textit{finite} number of inequality constraints. We can therefore construct the Lagrange function 
	\begin{align}
		\nonumber
		L(\omega,\boldsymbol{\lambda})=&\,\lambda^*I_f(\omega)+\mu_0(E(\omega)-E(\omega_0))+\mu_1\left(\int_M (\omega-\omega_0)\right)+\\ \label{lagrange}
		&\qquad +\sum_{i=1}^N\lambda_i\int_M\left((\omega-a_i)_+-(\omega_0-a_i)_+\right),
	\end{align}
with $\boldsymbol{\lambda}=(\lambda^*,\mu_0,\mu_1,\lambda_1,\dots,\lambda_N)\in \mathbb{R}^{N+3}$. Appealing to the KKT theory \cite[Theorem 47.E]{zeidler2013nonlinear}, we know that solving \eqref{vp} is equivalent to solve 
\begin{align}
	\label{vpLag}
	\min_{\omega \in X} L(\omega,\boldsymbol{\lambda}),
\end{align}
for a fixed $\boldsymbol{\lambda}$ such that for all $i=1,\dots,N$
\begin{equation}
	\label{eq:condKKT}
	\begin{cases}
	\displaystyle \lambda_i\geq 0, \qquad \lambda_i\int_M\left((\omega-a_i)_+-(\omega_0-a_i)_+\right)=0,\\
	\displaystyle  \int_M\left((\omega-a_i)_+-(\omega_0-a_i)_+\right)\leq 0.
	\end{cases}
\end{equation}
In addition, the so-called Slater condition, i.e. there exists $\widetilde{\omega}\in X$ such that $\int_M(\widetilde{\omega}-a_i)_+<\int_M(\omega_0-a_i)_+$, guarantees that $\lambda^*\neq 0$. This is clearly satisfied in our case, and therefore we consider $\lambda^*=1$. The coefficients $\mu_0,\mu_1$ are chosen to guarantee the conservation of the energy and the mean respectively. 

Then, let $\omega^*$ be a minimizer of \eqref{vp} or equivalently to \eqref{vpLag}. 
Defining $\omega_\eps=\omega^*+\eps w$, with $w\in X$, we have 
\begin{align}
	\frac{\dd}{\dd \eps}L(\omega_\eps,\boldsymbol{\lambda})|_{\eps=0}=\int_M (f'(\omega^*)+\mu_0\psi^*+\mu_1)w+\sum_{i=1}^N\lambda_i \lim_{\eps\to 0}\frac{1}{\eps}\left(\int_M((\omega^*+\eps w-a_i)_+-(\omega^*-a_i)_+)\right),
\end{align}
so that 
\begin{align}
	\label{eq:varL}
	\frac{\dd}{\dd \eps}L(\omega_\eps,\boldsymbol{\lambda})|_{\eps=0}=\int_M \left(f'(\omega^*)+\mu_0\psi^*+\mu_1+\sum_{i=i}^N\lambda_i(\chi_{\omega^*>a_i}+\chi_{\omega^*=a_i}\chi_{w>0})\right)w. 
\end{align}
If the term on the right hand side of \eqref{eq:varL} is negative, we would have found $L(\omega_\eps,\boldsymbol{\lambda})<L(\omega^*,\boldsymbol{\lambda})$, whence contradicting the optimality of $\omega^*$. Therefore,  \eqref{eq:KKT} is proved.
\end{proof}

\section{Properties of the Euler Omega limit set $\Omega_+(\omega_0)$}\label{secomegalim}

The orbit of the solution passing through $\omega_0$ well be denoted using the solution map by $\omega(t)= S_t(\omega_0)$. We recall that $\omega \in  C_w(\mathbb{R}; L^\infty(M))$ which denotes the space of function that are continuous in time with values in the weak-$*$ topology of $L^\infty(M)$ \cite{nguyen2021remarks}. 
 
We define the Omega limit set $\Omega_+(\omega_0)$ by \eqref{omlimset}.
This set is compact in $X$, which follows from the fact that $X$ is equipped with the weak-$*$ topology. Below we collect some facts about the structure of this set.

\begin{lemma}\label{longlongtime}
If $\omega_*\in \Omega_+(\omega_0)$ then $\{ S_t(\omega_*)\}_{ t\geq 0}\subset  \Omega_+(\omega_0)$.  Consequently $ \Omega_+(\omega_*)\subset \Omega_+(\omega_0)$
\end{lemma}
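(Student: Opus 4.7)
The plan is to exploit the semigroup property of $S_t$ together with its sequential weak-$*$ continuity on $X$ (a standard consequence of Yudovich's theorem: bounded vorticities give rise to log-Lipschitz velocity fields, hence the flow map depends continuously on the datum in the weak-$*$ topology; see e.g. the discussion in the reference \cite{nguyen2021remarks} cited just above). I would state this weak-$*$ continuity as the one analytic input of the argument and then the rest is set-theoretic.

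First, let $\omega_* \in \Omega_+(\omega_0)$. By definition of $\Omega_+(\omega_0)$ as the intersection over all $s\geq 0$ of the weak-$*$ closures $\overline{\{S_\tau(\omega_0),\, \tau\geq s\}}^*$, using that $X$ endowed with the weak-$*$ topology is metrizable, one extracts a sequence $t_n\to\infty$ with $S_{t_n}(\omega_0)\wc \omega_*$. Fix any $t\geq 0$. By the semigroup property, $S_{t_n+t}(\omega_0)=S_t(S_{t_n}(\omega_0))$, and by sequential weak-$*$ continuity of $S_t:X\righttoleftarrow$ we deduce
\begin{equation}
S_{t_n+t}(\omega_0) \wc S_t(\omega_*) \qquad \text{as }n\to\infty.
\end{equation}
Since $t_n+t\to\infty$, this limit belongs to every tail closure $\overline{\{S_\tau(\omega_0),\, \tau\geq s\}}^*$ and therefore to $\Omega_+(\omega_0)$. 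As $t\geq 0$ was arbitrary, $\{S_t(\omega_*)\}_{t\geq 0}\subset \Omega_+(\omega_0)$.

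For the second assertion, observe that $\Omega_+(\omega_0)$ is weak-$*$ closed, being an intersection of weak-$*$ closed sets. Hence for every $s\geq 0$,
\begin{equation}
\overline{\{S_t(\omega_*),\, t\geq s\}}^* \subset \overline{\Omega_+(\omega_0)}^* = \Omega_+(\omega_0),
\end{equation}
and intersecting over $s\geq 0$ yields $\Omega_+(\omega_*)\subset \Omega_+(\omega_0)$.

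The only nontrivial ingredient is the sequential weak-$*$ continuity of $S_t$ on $X$; everything else is bookkeeping about closures and tails. I would expect that to be the step worth flagging, though it is a well-known consequence of Yudovich well-posedness that can simply be quoted.
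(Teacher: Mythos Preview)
Your proof is correct and follows essentially the same approach as the paper: extract a sequence $t_n\to\infty$ with $S_{t_n}(\omega_0)\wc\omega_*$, apply the semigroup property and weak-$*$ continuity of $S_t$ to get $S_{t_n+t}(\omega_0)\wc S_t(\omega_*)\in\Omega_+(\omega_0)$, then use closedness of $\Omega_+(\omega_0)$ for the second assertion. The paper phrases the last step via compactness rather than closedness, but the argument is the same.
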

\begin{proof}
Since $\omega_*\in \Omega_+(\omega_0)$ there exists a sequence $t_n\to\infty$ such that $S_{t_n}(\omega_0)\wc \omega_*$. It follows from the weak-$*$ continuity of the solution map $S_t$  that $S_t(S_{t_n}(\omega_0))\wc  S_t(\omega_*)$ for any $t\in (-\infty, \infty)$.  The semigroup property shows that $S_{t + t_n}(\omega_0)\wc  S_t(\omega_*)$.  Thus $S_t(\omega_*)\in \Omega_+(\omega_0)$.  Since $\Omega_+(\omega_0)$ is compact in $X$, we have that any weak limit of $\{S_t(\omega_*)\}_{t\geq 0}$ is in  $\Omega_+(\omega_0)$.  Thus $ \Omega_+(\omega_*)\subset\Omega_+(\omega_0)$.
\end{proof}

\begin{theorem}
	For any $\omega_0\in X$,  $\Omega_+(\omega_0)$ is connected.
\end{theorem}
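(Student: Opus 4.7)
The plan is to realize $\Omega_+(\omega_0)$ as a decreasing intersection of compact connected sets and invoke the standard topological fact that such intersections remain connected in a compact Hausdorff space.

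\textbf{Step 1: continuity of the orbit in the weak-$*$ topology.} As recalled just before Lemma~\ref{longlongtime}, Yudovich's theorem gives $\omega(\cdot) = S_\cdot(\omega_0) \in C_w(\mathbb{R};L^\infty(M))$, i.e. the map $t \mapsto S_t(\omega_0)$ is continuous from $[0,\infty)$ (with its standard topology) into $X$ (equipped with the weak-$*$ topology). Since $L^1(M)$ is separable, $X$ is a compact Hausdorff space in this topology (indeed metrizable), so standard general-topology arguments apply.

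\textbf{Step 2: each truncated orbit closure is compact and connected.} For $s\geq 0$, set
\[
A_s := \overline{\{S_t(\omega_0) : t\geq s\}}^*.
\]
The ray $[s,\infty)$ is connected, and its image under the continuous map $t\mapsto S_t(\omega_0)$ is therefore a connected subset of $X$. Taking weak-$*$ closure preserves connectedness. Being a closed subset of the compact space $X$, $A_s$ is also compact. Moreover, the family $\{A_s\}_{s\geq 0}$ is decreasing: if $s\leq s'$ then $A_{s'}\subset A_s$. By definition \eqref{omlimset},
\[
\Omega_+(\omega_0) = \bigcap_{s\geq 0} A_s.
\]

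\textbf{Step 3: nested intersection of compact connected sets is connected.} This is a classical fact in a compact Hausdorff space, and I will spell out the argument since it is the conceptual content of the theorem. Suppose for contradiction that $\Omega_+(\omega_0) = B_1 \sqcup B_2$ with $B_1, B_2$ nonempty, disjoint, and closed in $\Omega_+(\omega_0)$ (hence in $X$). By normality of compact Hausdorff spaces, pick disjoint open sets $U_1, U_2 \subset X$ with $B_i \subset U_i$. Then the decreasing family of compact sets $\{A_s \setminus (U_1\cup U_2)\}_{s\geq 0}$ has empty intersection, so by the finite intersection property there exists $s^*$ with $A_{s^*}\subset U_1 \cup U_2$. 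Since $B_i \subset \Omega_+(\omega_0)\subset A_{s^*}$ and $B_i\neq\emptyset$, both $A_{s^*}\cap U_1$ and $A_{s^*}\cap U_2$ are nonempty; they are also disjoint, relatively open in $A_{s^*}$, and cover it. This contradicts the connectedness of $A_{s^*}$ established in Step~2. Therefore no such separation exists and $\Omega_+(\omega_0)$ is connected. \qed

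The only real content is Step~1 (continuity of the orbit into $(X,\text{weak-}*)$), which is already in the literature and quoted in the paper; the remaining steps are standard point-set topology, so I do not anticipate any obstacle.
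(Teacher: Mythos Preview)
Your proof is correct. Both your argument and the paper's rest on the same two ingredients---weak-$*$ continuity of the orbit $t\mapsto S_t(\omega_0)$ and compactness of $X$---but the organization differs. The paper argues directly with the metric: assuming a separation $\Omega_+(\omega_0)=A\sqcup B$ with $d(A,B)=c>0$, it picks times $a_j<b_j<a_{j+1}$ with $S_{a_j}(\omega_0)$ near $A$ and $S_{b_j}(\omega_0)$ near $B$, then uses an intermediate-value step along the continuous path to locate $r_j\in(a_j,b_j)$ with $d(S_{r_j}(\omega_0),A)=d(S_{r_j}(\omega_0),B)=c/2$, and extracts a weak-$*$ limit lying in $\Omega_+(\omega_0)$ but not in $A\cup B$. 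Your approach instead packages the situation as $\Omega_+(\omega_0)=\bigcap_s A_s$ with each $A_s$ compact and connected (as the closure of a continuous image of $[s,\infty)$), and invokes the standard fact that a nested intersection of compact connected sets in a compact Hausdorff space is connected. Your route is cleaner and isolates the structural reason the result holds; the paper's route is more hands-on and avoids quoting the nested-intersection lemma by essentially reproving it in this instance.
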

\begin{proof}
First, we consider the standard metric on $X$ which generates the weak-$*$ topology. More precisely, being $L^1$ separable we have a countable set of functions $f_n$ which is dense in $L^1$. For any $\omega\in X_M$ define the seminorm $p_n(\omega)=|\int_M f_n\omega |$. The metric is then defined as $d(\omega_1,\omega_2)=\sum_n 2^{-n} p_n(\omega_1-\omega_2)/(1+p_n(\omega_1-\omega_2))$.
	
	Assume now that $\Omega_+(\omega_0)$ is not connected, namely there exists two closed sets $A,B$ such that $\Omega_+(\omega_0)=A\cup B$ and $A\cap B=\emptyset$. Since $A, B\subset X$ with $X$ compact in the weak-$*$ topology, we also know that $A,B$  are compact. Therefore, we know that 
\footnote{Given any metric space $X$ with $A$ compact, $B$ closed and $A\cap B=\emptyset$ then for any $\alpha\in A$, $\beta \in B$ one has $d(\alpha,\beta)= c>0$. Indeed if this is not the case then let $\alpha_n,\beta_n$ be such that $d(\alpha_n,\beta_n)\to 0$. Being $A$ compact $\alpha_n\to \alpha\in A$ (up to subsequence). Therefore $d(\alpha,\beta_n)\to 0$ and since $B$ is closed this imply $\alpha \in B$, which is a contradiction since $A\cap B=\emptyset$.}	\begin{equation}
	\label{def:domAomB}
	d(A,B)=\inf \{d(\omega_{\mathsf{A}},\omega_B): \omega_{\mathsf{A}}\in A, \omega_B\in B\}=c>0.
	\end{equation}
	Since $A, B\subset \Omega_+(\omega_0)$, for any $\omega_{\mathsf{A}}\in A,\omega_B\in B$ there exists sequences $a_j, b_j$ such that $S_{a_j}(\omega_0)\wc \omega_{\mathsf{A}}$ and $S_{b_j}(\omega_0)\wc \omega_B$.  Since $a_j,b_j\to \infty $ as $j\to \infty$, up to a relabelling we can assume that $a_j<b_j<a_{j+1}$. In addition, taking $j>j_0$ with $j_0$ large enough, the weak-$*$ convergence imply 
	\begin{equation}
	\label{prop1}
	d(S_{a_j}(\omega_0),A)<c/2, \qquad d(S_{b_j}(\omega_0),B)<c/2, \qquad \text{for any } j>j_0.
\end{equation}	
Then, define the following sequence 
	\begin{equation}
	S_{t_j}(\omega_0)=S_{a_{2j}}(\omega_0), \qquad S_{t_{j+1}}(\omega_0)=S_{b_{2j+1}}(\omega_0).
	\end{equation}
From \eqref{prop1}, notice that $d(S_{t_j}(\omega_0),A)<c/2$ and $d(S_{t_{j+1}}(\omega_0),B)<c/2$, meaning that $S_t(\omega_0)$ with $t\in[t_j,t_{j+1}]$ is a curve joining $A$ and $B$. Therefore, by the continuity of the flow map, there exists $t_j<r_j<t_{j+1}$ such that 
\begin{equation}
d(S_{r_j}(\omega_0),A)=d(S_{r_j}(\omega_0),B)=c/2.
\end{equation}
In addition, by the compactness of $X$ we know that $S_{r_j}(\omega_0)\wc \omega_1$ (up to subsequence), and by definition of $\Omega_+(\omega_0)$ we know that $\omega_1\in \Omega_+(\omega_0)$. However, from the identity above we deduce that $\omega_1 \notin A\cup B=\Omega_+(\omega_0)$, which is a contradiction.
		\end{proof}

Most interestingly, we have

\begin{theorem}[{\v{S}}ver{\'a}k \cite{vsverak2012selected}]\label{contL2orb}
For any $\omega_0\in X$, the set $ \Omega_+(\omega_0)$ contains an $L^2$--precompact orbit $S_t(\omega_*)$.
\end{theorem}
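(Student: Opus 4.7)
My plan is to produce $\omega_*$ as a minimizer of the enstrophy $\mathsf{I}_{f}(\omega) = \int_M \omega^2\,\rmd x$ over the set $\Omega_+(\omega_0)$, and then to exploit the Euler conservation of this Casimir along the orbit through $\omega_*$ to upgrade weak-$*$ compactness to $L^2$-precompactness.

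\textbf{Step 1: existence of a minimizer.} Since $\Omega_+(\omega_0)$ is non-empty (any weak-$*$ accumulation point of $S_t(\omega_0)$ as $t\to\infty$ lies in it by the compactness of $X$) and weak-$*$ compact in $X$, and since the map $\omega\mapsto \int_M \omega^2\,\rmd x$ is weak-$*$ lower-semicontinuous (using that $\omega\mapsto \omega^2$ is strictly convex, cf.\ \eqref{bd:lscweak}), the direct method yields an element $\omega_* \in \Omega_+(\omega_0)$ such that
\begin{equation*}
\int_M \omega_*^2\,\rmd x \;=\; \min_{\omega\in \Omega_+(\omega_0)} \int_M \omega^2\,\rmd x.
\end{equation*}

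\textbf{Step 2: forward invariance.} By Lemma \ref{longlongtime}, the forward orbit satisfies $\{S_t(\omega_*)\}_{t\ge 0}\subset \Omega_+(\omega_0)$ and $\Omega_+(\omega_*)\subset \Omega_+(\omega_0)$. In particular, any weak-$*$ accumulation point of $\{S_{t_n}(\omega_*)\}$ along $t_n\to\infty$ is again a competitor in the minimization above.

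\textbf{Step 3: upgrading weak to strong convergence.} Let $(s_n)\subset [0,\infty)$ be any sequence and set $\omega_n := S_{s_n}(\omega_*)$. Because enstrophy is a Casimir, $\|\omega_n\|_{L^2}^2 = \|\omega_*\|_{L^2}^2$ for every $n$. Passing to a subsequence, either $s_n\to s^*$ for some finite $s^*$, in which case weak-$*$ continuity of the solution map gives $\omega_n \wc S_{s^*}(\omega_*)$ and the latter has the same enstrophy $\|\omega_*\|_{L^2}^2$; or $s_n\to\infty$, in which case $\omega_n \wc \omega_{**}$ for some $\omega_{**}\in\Omega_+(\omega_*)\subset\Omega_+(\omega_0)$ and the minimality of $\omega_*$ combined with lower-semicontinuity gives
\begin{equation*}
\|\omega_*\|_{L^2}^2 \;\le\; \|\omega_{**}\|_{L^2}^2 \;\le\; \liminf_{n\to\infty}\|\omega_n\|_{L^2}^2 \;=\; \|\omega_*\|_{L^2}^2,
\end{equation*}
so $\|\omega_{**}\|_{L^2} = \lim_n \|\omega_n\|_{L^2}$. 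In either case we have weak convergence in $L^2$ together with convergence of $L^2$ norms, which is well known to imply strong $L^2$ convergence. (Note: on the bounded set $X$, the weak-$*$ topology of $L^\infty$ coincides with the weak topology of $L^2$, as recalled after \eqref{weakstar}.) Hence every sequence in $\{S_t(\omega_*)\}_{t\ge 0}$ admits an $L^2$-convergent subsequence, which is exactly $L^2$-precompactness.

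\textbf{Main obstacle.} The only delicate point is the weak-to-strong upgrade, which hinges on the fact that enstrophy is \emph{simultaneously} an exact Euler invariant and a strictly convex lower-semicontinuous functional. Strict convexity is what guarantees that equality in the Jensen/lower-semicontinuity inequality forces norm convergence; any non-strictly-convex Casimir (or linear invariant such as the energy) would not suffice. The argument is robust: any strictly convex Casimir $\mathsf{I}_f$ would produce the same minimizer-based construction, and in fact $\omega_*$ is automatically a minimizer of $\mathsf{I}_f$ on the larger set $\Omega_+(\omega_0)$ for $f(x)=x^2$, which ties this result naturally back to Theorem \ref{varprop}.
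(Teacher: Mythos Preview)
Your proof is correct and follows essentially the same route as the paper: minimize a strictly convex Casimir over the weak-$*$ compact set $\Omega_+(\omega_0)$, use Lemma \ref{longlongtime} to ensure the orbit through the minimizer stays inside, and then upgrade weak to strong convergence by combining conservation of the Casimir with lower semicontinuity. The only cosmetic differences are that the paper works with a general strictly convex $f$ (using a Taylor-remainder argument for the upgrade) while you specialize to $f(x)=x^2$ and invoke the Radon--Riesz property directly, and that you handle the bounded-time subsequence case explicitly whereas the paper absorbs it into the observation that $\Omega_+(\omega_0)$ is weak-$*$ closed and already contains the whole orbit $\{S_t(\omega_*)\}_{t\ge0}$.
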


\begin{proof}
Let $f:\mathbb{R}\to \mathbb{R}$ be any strictly convex function and consider the Casimir
\be
\mathsf{I}_f(\omega) = \int_\Omega f(\omega(x))\rmd x.
\ee
Note that the functional $\mathsf{I}_f$ is sequentially weakly$^*$ lower semi-continuous on $L^\infty(\Omega)$ and thus  weakly$^*$ lower semi-continuous on $X$. 
Moreover,  $\mathsf{I}_f$ attains its minimum on $\Omega_+(\omega_0)$.  To see this, observe that there exists a sequence $\omega^j\in\Omega_+(\omega_0)$ such that $\omega^j\wc \omega_*\in \Omega_+(\omega_0)$ (which follows from the fact that $X$ is compact with the weak-$*$ topology) and $ \mathsf{I}_f(\omega^j)\to \inf_{\omega\in \Omega_+(\omega_0)}  \mathsf{I}_f(\omega)=: m$. Since $\mathsf{I}_f(\omega_*) \leq m$ by the lower semicontinuity we deduce that $\mathsf{I}_f(\omega_*)= m$.

We now consider any weak limit $\omega_1$ of the orbit $S_t(\omega_*)$, i.e. define by a sequence of times $t_j$ such that $S_{t_j}(\omega_*)\wc \omega_1$.  By Lemma \ref{longlongtime}, 
$\omega_1\in \Omega_+(\omega_0)$. 
Therefore $\mathsf{I}_f(\omega_1)\geq m$ by the definition of $m$ as the minimum of $\mathsf{I}_f$ on $\Omega_+(\omega_0)$. On the other hand, we have   by lower semicontinuity and conservation of the Casimirs at finite times that
\be
\mathsf{I}_f(\omega_1)\leq\liminf_{j\to\infty} \mathsf{I}_f(S_{t_j}(\omega_*)),\quad \mathsf{I}_f(S_{t_j}(\omega_*))= \mathsf{I}_f(\omega_*)=m.
\ee
Thus we conclude that $\mathsf{I}_f(S_{t_j}(\omega_*))= \mathsf{I}_f(\omega_1)=m$.
Finally, we will show that $\omega_j \wc \omega$ together with $\mathsf{I}_f(\omega_j) \to \mathsf{I}_f(\omega)$ implies stong convergence in ${L^p}$ for any $p\in [1,\infty)$ when $f$ is strictly convex. Indeed, by Taylor's remainder theorem
\be
f(\omega_j) - f(\omega) = f'(\omega) (\omega_j-\omega) + \tfrac{1}{2} f''(\overline{\omega}_j) (\omega_j-\omega)^2
\ee
for some $\overline{\omega}_j\in [\omega_j, \omega]$.  Integrating the above over $\Omega$ and denoting $c_0:=\inf_{x\in \mathbb{R}} f''(x)/2>0$ we have
\be
\mathsf{I}_f(\omega_j)- \mathsf{I}_f(\omega) \geq  \int_\Omega  f'(\omega) (\omega_j-\omega)\rmd x + c_0 \|\omega_j-\omega\|_{L^2(\Omega)}^2
\ee
Taking the limit $j\to \infty$ and using the facts that $\mathsf{I}_f(\omega_j)\to \mathsf{I}_f(\omega)$ and that $\omega_j\wc \omega$ together with $f'(\omega)\in L^1(\Omega)$, we find $ \|\omega_j-\omega\|_{L^2(\Omega)}\to 0$.
 Convergence in $L^p$ follows by interpolation with $L^\infty$.
Thus we have that $S_{t_j}(\omega_*)\to \omega_1$ in any $L^p$ and thus the orbit is compact in $L^p$  for any $p\in [1,\infty)$.
\end{proof}

It would be interesting to understand whether different choices of the Casimirs $ \mathsf{I}_f$ can correspond to different compact orbits.

\section{An example of Shnirelman minimal flow with piecewise constant vorticity}\label{constvort}

Consider a shear flow $u(x_1,x_2)=(U(x_2),0)$ in a periodic channel $M=\TT\times[-1,1]$ with a convex profile $U(x_2)$. It is Shnirelman's minimal, even if the function $U(x_2)$ is not strictly convex, and has a flat piece (e.g. $U(x_2)={\rm const.}$ for $a\leq x_2 \leq b$). Consider the specific example with streamfunction $\psi$ and vorticity $\omega = - U'(x_2)$ defined by  
\begin{equation}\nonumber
		\psi(x_2)=\begin{cases} \frac12x_2+\frac18  &x_2\in[-1,-\frac12],\\
		-\frac12x_2^2  & x_2\in[-\frac12,\frac12],\\
		-\frac12x_2+\frac18  &x_2\in[\frac12,1].
	\end{cases}  \quad
U(x_2)=\begin{cases}- \frac12  &x_2\in[-1,-\frac12],\\
		x_2 & x_2\in[-\frac12,\frac12],\\
		\frac12  &x_2\in[\frac12,1].
	\end{cases}  	\quad 
	\omega(x_2)=\begin{cases}-1  &x_2\in(-\frac12,\frac12),\\
		0  & \text{otherwise}.
	\end{cases} 
\end{equation}
We now define squares where the vorticity is $0$ and $-1$. For instance
\begin{align}
	&Q_0=\begin{cases}[-\frac{\delta}{2},\frac{\delta}{2}]\times [x_{2,0},x_{2,0}+\delta]\qquad &\tfrac12<x_{2,0}<1-\delta,\\
		[-\frac{\delta}{2},\frac{\delta}{2}]\times [x_{2,0}-\delta,x_{2,0}]\qquad &-1+\delta<x_{2,0}<-\tfrac12.
	\end{cases}
	\\
	&Q_{-1}=\begin{cases}[-\frac{\delta}{2},\frac{\delta}{2}]\times [x_{2,0},x_{2,0}+\delta]\qquad &0<x_{2,0}<\tfrac12-\delta,\\
		[-\frac{\delta}{2},\frac{\delta}{2}]\times [x_{2,0}-\delta,x_{2,0}]\qquad &-\tfrac12+\delta<x_{2,0}\leq 0.
	\end{cases}  
\end{align}
Notice that $Q_0\subset \TT\times ([1/2,1]\cup [-1,-1/2])$ and $Q_{-1}\subset \TT\times [-1/2,1/2]$. Let $\Phi$ be the area preserving map that exchanges $Q_0$ with $Q_{-1}$ and define 
$
	K_\eps\omega=((1-\eps){\rm id}+\eps \Phi)(\omega).
$
Then, by definition we know that 
\begin{align*}
	&\omega|_{Q_0}=0, \qquad \omega\circ\Phi|_{Q_0}=\omega|_{Q_{-1}}=-1\\
	&\psi|_{Q_0}=\begin{cases}
		\frac12x_2+\frac18  \qquad & x_2<-\tfrac12,\\
		-\frac12x_2+\frac18 \qquad &x_2>\tfrac12,
	\end{cases} \qquad \psi\circ\Phi|_{Q_0}=\psi|_{Q_{-1}}=-\tfrac12 (x_2')^2,	
\end{align*}
where $x_2'$ denotes the vertical coordinate of the mapped point $\Phi(x) \in Q_{-1}$. Computing the first variation of the energy we have 
\begin{equation}
	\frac{\dd}{\dd \eps}E(K_\eps \omega)=\int_{Q_0}(\omega-\omega\circ \Phi)(\psi-\psi\circ \Phi).
\end{equation}
Consequently, integrating over the $x_1$-width of $\delta$, we obtain the following:
\begin{align*}
	\frac{\dd}{\dd \eps}E(K_\eps \omega)&= \delta\int_{x_{2,0}}^{x_{2,0}+\delta} (0 - (-1)) \left( (-\tfrac12 x_2+\tfrac18) - (-\tfrac12 (x_2')^2) \right) \dd x_2 < 0, \qquad \text{for} \qquad x_{2,0}>\tfrac{1}{2},\\
	\frac{\dd}{\dd \eps}E(K_\eps \omega)&= \delta\int_{x_{2,0}-\delta}^{x_{2,0}} (0 - (-1)) \left( (\tfrac12 x_2+\tfrac18) - (-\tfrac12 (x_2')^2) \right) \dd x_2 < 0, \qquad \text{for} \qquad x_{2,0}<-\tfrac{1}{2}.
\end{align*}
Note that, the $<0$ follows because the maximum of $\psi$ is $0$ at the origin, meaning $\psi|_{Q_0} \leq -1/8$ while $\psi|_{Q_{-1}} > -1/8$, meaning that $\psi - \psi\circ\Phi < 0$ everywhere in the integration domain.

Therefore, for any proper mixing we decrease the energy. Thus, $\omega$ is an ``energy excessive"  minimal flow according to the definition in \cite{shnirelman1993lattice}.
Another possible example is the circular flow inside a disk $\mathbb{D}$ given by  $u(r,\theta)=V(r)e_\theta$ where $V(r)=0$ for $0\leq r \leq a$ and convex $V(r)$  which grows $a\leq r\leq 1$.

 \subsection*{Acknowledgments} 

 We thank A. Shnirelman for numerous inspiring discussions.  We thank B. Khesin for closely reading an early version of this manuscript, as well as for many insightful comments. The authors are grateful to Luigi De Rosa for pointing out an error in the proof of Lemma \ref{prop:minstead}, and for his valuable suggestions which led to the current version of Lemma \ref{lem:Shnirelmankey}. We happily acknowledge also J. Bedrossian, T. Elgindi and F. Torres de Lizaur for many useful comments and discussions.  The research of T.D. was partially supported by NSF-DMS grant 2106233 and the Charles Simonyi Endowment at the Institute for Advanced Study. The research of M.D. was supported by the Royal Society through the University Research Fellowship  (URF$\backslash$R1$\backslash$191492) and  the GNAMPA-INdAM.

 \subsection*{Data availability Statement} The data that support (for visualization only)  the findings of this study are openly available in Github at  \url{https://github.com/navidcy/2D-Euler}.  See also  \cite{CD21,C21}.

 \bibliographystyle{siam}
\bibliography{bibentro}
\end{document}